\theoremstyle{definition}
\newtheorem{definition}{Definition}[section]
\theoremstyle{plain}
\newtheorem{theorem}[definition]{Theorem}
\theoremstyle{plain}
\newtheorem{proposition}[definition]{Proposition}
\theoremstyle{plain}
\newtheorem{lemma}[definition]{Lemma}
\theoremstyle{plain}
\newtheorem{corollary}[definition]{Corollary}
\theoremstyle{remark}
\newtheorem{claim}{Claim}
\theoremstyle{definition}
\newtheorem{problem}{Problem}
\numberwithin{equation}{section}
\DeclareMathOperator{\Inv}{\mathsf{Inv}}%
\DeclareMathOperator{\POL}{Pol}%
\DeclareMathOperator{\Con}{Con}%
\newcommand{\GF}[1]{\mathsf{GF}(#1)}%
\newcommand{\ab}[1]{{\mathbf{#1}}}%
\newcommand{\Mtct}[3]{\mathtt{M}_{\ab{#1}}(#2,#3)}%
\newcommand{\quotienttct}[2]{\Braket{#1,#2}}
\newcommand{\interval}[2]{\mathtt{I}[#1,#2]}%
\newcommand{\bottom}[1]{0_{#1}}%
\newcommand{\uno}[1]{1_{#1}}%
\newcommand{\Cg}[2][\ab{A}]{\text{Cg}_{#1}(#2)}%
\newcommand{\numberset}{\mathbb}
\newcommand{\N}{\numberset{N}}
\newcommand{\Z}{\numberset{Z}}
\newcommand{\finset}[1]{\{1,\dots, #1\}}%
\DeclareMathAlphabet{\mathbfsl}{OT1}{cmr}{bx}{it}
\renewcommand{\vec}[1]{{\boldsymbol{#1}}}%
\newcommand{\ari}[1]{_{#1}}%
\newcommand{\strset}[1]{\mathcal{#1}}%
\newcommand{\restrict}[1]{\rvert_{#1}}%
\newcommand{\card}[1]{\mleft\lvert#1\mright\rvert}%
\title{On polynomial completeness properties of finite Mal'cev algebras}
\author{Bernardo Rossi}
\address{
Institut für Algebra,
Johannes Kepler Universität Linz, Altenberger Straße 69, 4040 Linz, Austria.}
\address{Department of Algebra, Faculty of Mathematics and Physics, 
Charles University, Sokolovsk\'{a} 49/83 186 75, Praha 8, Czechia.}
\email{bernardo.rossi@matfyz.cuni.cz}
\subjclass[2010]{08A05, 08A40}
\keywords{
Polynomial interpolation, Mal'cev algebras, Congruence identities}
\thanks{Supported by the Austrian Science Fund (FWF):~P33878 and by
Charles University under grant no. PRIMUS/24/SCI/008.}
\begin{document}
\begin{abstract}
 Polynomial completeness results aim at
 characterizing those functions that are 
 induced by polynomials. 
 Each polynomial function is congruence preserving, but the
 opposite need not be true.
 A finite algebraic structure $\ab{A}$ is called 
 strictly 1-affine complete if every 
 unary partial function from a subset of $A$ to $A$
 that preserves 
 the congruences of $\ab{A}$ can be interpolated by a 
 polynomial function of $\ab{A}$. 
 The problem of characterizing strictly 1-affine complete 
 finite Mal'cev algebras
 is still open. 
 In this paper we extend the characterization by 
 E. Aichinger and P. Idziak of 
 strictly 1-affine complete expanded groups to
 finite congruence regular Mal'cev algebras. 
\end{abstract}

\date{\today}

\maketitle
\section{Introduction}
Inspired by the work of G. Grätzer on the variety of Boolean algebras 
\cite{Gra62}
and by the work of R. Wille on congruence class geometries 
\cite{Wil70},
in the past century 
several authors studied
those algebraic structures that
exhibit some form of completeness with respect to polynomial
or term functions 
\cite{Wer74, Wer78, Gum79, PilSo80, Nob85, KaaPix87}.
For an algebraic structure $\ab{A}$
a $k$-ary polynomial of $\ab{A}$ is an expression of
the form 
$t(x_1 , \dots , x_k , a_1 , \dots , a_m )$, where $t$ is a $(k+m)$-ary 
term in the language of $\ab{A}$
and $a_1, \dots, a_m$ are elements of the universe of $\ab{A}$. 
For a finitary operation $f$ on $A$ of arity $k$, 
if for all 
$\vec{y}\in A^k$ we have 
$f(\vec{y})=t^{\ab{A}}(y_1 , \dots , y_k , a_1 , \dots , a_m)$,
we say that $f$ is \emph{induced} by the polynomial 
$t(x_1 , \dots , x_k , a_1 , \dots , a_m )$.
Functions induced by polynomials are usually called \emph{polynomial functions}. 
Polynomial functions have several properties,
in particular they preserve congruences. On the other hand, 
in general, not all congruence-preserving functions are induced
by polynomials. For instance, the map $x\mapsto x^2$ 
preserves the congruences of the group $\Z_4$
but it
is not a
polynomial function.
An algebra $\ab{A}$ whose polynomial functions are 
exactly those 
functions that preserve the congruences of $\ab{A}$ is called 
\emph{affine complete}.
In \cite[Problem~6]{Gra08}
the author asks for 
a characterization of affine complete algebras.
The problem
in its full generality remains open; 
nonetheless, many partial results have been proved.
In 1971, H. Werner proved that a vector space is 
affine complete if and only if its dimension 
is not 1 \cite{Wer71}. 
In 1976, W. Nöbauer characterized
all affine complete finitely generated 
abelian groups
\cite{Nob76}. 
All unary congruence-preserving functions over finite abelian groups 
were determined in \cite{LauNob76}, and the 
ones over symmetric groups in \cite{Kai77}.
A parallel line of research focuses on
affine complete varieties, that is varieties that
consist entirely of affine complete algebras. 
In \cite{KaaMcK97} it was proved that
each such variety must be congruence distributive,
\cite[Theorem~4.1]{KaaMcK97};
and a characterization was given of 
affine complete arithmetical varieties of countable 
type \cite[Corollary~4.4]{KaaMcK97}. 
For a comprehensive survey of the topic we refer the 
reader to the monograph~\cite{KaaPix01} and to the 
paper~\cite{AicPil03}. 
If each $k$-ary operation on $A$ that 
preserves the congruences of $\ab{A}$ 
is a polynomial function of $\ab{A}$, 
then $\ab{A}$ is called
\emph{$k$-affine complete}.
In~\cite[Corollary~11.6]{AicMud09}, E. Aichinger and N. Mudrinski 
proved a characterization of $k$-affine complete 
finite algebras with a group reduct 
(also known in the literature as \emph{expanded groups}) 
whose lattice 
of ideals satisfies a property called (APMI).

In this paper we focus on a slightly different question:
Let $\ab{A}$ be 
an algebra; 
we say that
$\ab{A}$ is \emph{(locally) strictly $k$-affine complete}
if each $k$-ary partial function defined on a (finite) 
subset of $A^k$ that preserves 
the congruences of $\ab{A}$ can be 
interpolated on its domain by a $k$-ary polynomial 
function
of $\ab{A}$. 
An algebra that is (locally) strictly $k$-affine complete 
for each $k\in \N$ is called \emph{(locally)
strictly affine complete}. 
In~\cite{HagHer82} (cf.~\cite[Section~5.1]{KaaPix01} and 
\cite[Proposition~5.2]{Aic00})
it is proved that an algebra in a congruence permutable variety 
is locally 
strictly affine complete if and only if it is congruence 
neutral, a notion coming from commutator theory 
(cf.~\cite{FreMcK87} and \cite[Section~4.13]{McKMcnTay88}).
Moreover, in \cite{HagHer82, Aic00}, it is proved that
for $k\geq 2$
an algebra in a congruence permutable variety
is locally strictly $k$-affine complete
if and only if it is locally strictly $2$-affine complete. 
Clearly, a finite algebra is locally strictly $k$-affine
complete if and only if it is strictly $k$-affine
complete.  
These results leave the following problem open: 
\begin{problem}\label{problem:char_s1acMalcev}
Characterize the finite algebras in a congruence permutable 
variety that are strictly $1$-affine complete. 
\end{problem}
A partial answer to Problem~\ref{problem:char_s1acMalcev}
was given in \cite{AicIdz04},
where the authors gave a characterization 
of strictly $1$-affine complete finite expanded groups.
In this paper we extend their 
characterization to the class of finite congruence regular algebras 
with a Mal'cev polynomial.

In the present note we will use the
standard notation of universal algebra,
clone theory 
and tame congruence theory
(cf.~respectively~\cite[Chapters~1-4]{McKMcnTay88},
\cite[Chapter~E]{PosKal79},
and \cite[Chapters~1-4]{HobbMcK}).
For an algebra $\ab{A}$ we will call 
the partial polymorphisms of the set $\Con\ab{A}$
\emph{partial compatible functions} 
or \emph{partial congruence-preserving functions}.
By $[\cdot, \cdot]$, we
will denote the binary term condition commutator 
as defined in \cite[Definition~4.150]{McKMcnTay88}.
For a pair of congruences $\alpha, \beta$ of an algebra
$\ab{A}$
in a congruence modular variety,
we let $(\alpha :\beta)$ be the largest congruence 
$\eta$ such that $[\eta, \beta]\leq \alpha$. 
This congruence is called the 
\emph{centralizer of $\alpha$ and $\beta$}
(for algebras in congruence modular varieties
this definition is equivalent 
to~\cite[Definition~4.150]{McKMcnTay88}  
by \cite[Proposition~4.2]{FreMcK87}).
We
will sometimes say that an interval 
$\interval{\alpha}{\beta}$ of $\Con\ab{A}$
is abelian if the quotient $\quotienttct{\alpha}{\beta}$
is abelian according to~\cite[Definition~3.6(6)]{HobbMcK}.
In contrast to the standard definition, we will
refer to an algebra with a Mal'cev polynomial
as a \emph{Mal'cev algebra}. 
Note that in \cite{BurSan81, McKMcnTay88} a Mal'cev algebra 
is an algebra that generates a congruence permutable variety. 
Thus, all algebras that are Mal'cev according to the 
standard definition satisfy our definition, 
while the opposite is not true, as witnessed by
the algebra $(\{0,1\}; \rightarrow)$.
In \cite{IdzSlo01} the authors developed a theory of Mal'cev algebras 
whose congruence lattice satisfies a property 
called (SC1) that plays a central role in our characterization.
\begin{definition}\label{def:SC1}
A Mal'cev algebra $\ab{A}$ satisfies the condition (SC1) 
if for every strictly meet irreducible congruence $\mu$ of $\ab{A}$
we have $(\mu:\mu^+)\leq \mu^+$. 
\end{definition}
Following \cite{AicIdz04}, we introduce the following 
definition:
\begin{definition}\label{def:definition_ABp}
Let $\ab{A}$ be a Mal'cev algebra, let
$p$ be a prime number, and let $\gamma\in \Con\ab{A}$.
We say that $(\ab{A}, \gamma)$ 
has property (AB$p$) if for all $\alpha, \beta\in \interval{\bottom{A},\gamma}$
with $\alpha\prec\beta$ and $[\beta, \beta]\leq \alpha$, and 
for each $a\in A$ we have 
that $\card{(a/\alpha)/(\beta/\alpha)}\in\{1, p\} $.
In other words; for each abelian prime quotient 
$\quotienttct{\alpha}{\beta}$
in the interval $\interval{\bottom{A}}{\gamma}$
each $\beta$-class is the union of either $1$ or $p$ distinct
$\alpha$-classes. 
We say that $\ab{A}$ has property (AB$p$) if $(\ab{A}, \uno{A})$
has property (AB$p$).  
\end{definition}

In \cite{AicIdz04} the authors characterize the 
strictly 1-affine complete Mal'cev algebras with
a group reduct as those that satisfy (SC1) and (AB2)
(cf.~\cite[Theorem~1.3]{AicIdz04}).
The main result of this paper is a generalization
from finite expanded groups to finite Mal'cev algebras
of  the implication ``(1) $\Rightarrow$ (2)'' of \cite[Theorem~1.3]{AicIdz04}:
\begin{theorem}\label{teor.main_theorem_sc1_ab2_imply_compl}
Let $\ab{A}$ be finite Mal'cev algebra with (SC1) and (AB2).
Then $\ab{A}$ is strictly 1-affine complete. 
\end{theorem}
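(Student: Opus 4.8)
The plan is to prove, by induction on $\card{A}$, the equivalent statement that every congruence preserving partial function $f\colon D\to A$ with $D\subseteq A$ is interpolated on $D$ by a unary polynomial of $\ab{A}$; for a finite algebra this is exactly strict $1$-affine completeness. The base case $\card{A}=1$ is settled by constant polynomials. For the inductive step I would fix a minimal nonzero congruence $\mu$ of $\ab{A}$ and pass to the quotient $\ab{A}/\mu$, which is again Mal'cev. First one records the inheritance lemma that (SC1) and (AB2) descend to $\ab{A}/\mu$: the strictly meet irreducible congruences of $\ab{A}/\mu$ are exactly those of $\ab{A}$ lying above $\mu$, their covers and centralizers are computed inside $\interval{\mu}{\uno{A}}$ (centralizers commute with the canonical quotient map in congruence modular varieties), and the abelian prime quotients of $\ab{A}/\mu$ are precisely those of $\ab{A}$ lying in $\interval{\mu}{\uno{A}}$. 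The partial map $\bar{f}$ induced by $f$ on $\bar{D}=\{d/\mu : d\in D\}$ is well defined, since congruence preservation of $f$ makes it independent of the representative, and it is congruence preserving on $\ab{A}/\mu$; by the induction hypothesis it is interpolated by a polynomial of $\ab{A}/\mu$, and lifting that polynomial yields a unary polynomial $g$ of $\ab{A}$ with $(g(d),f(d))\in\mu$ for every $d\in D$.

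It then remains to \emph{correct $g$ inside $\mu$}, that is, to produce a unary polynomial $h$ with $h(d)=f(d)$ for all $d\in D$, knowing that $f(d)$ and $g(d)$ lie in the same $\mu$-class. Writing the correction as $h(x)=\Psi(x,g(x))$ for a binary polynomial $\Psi$ keeps $h$ unary, so the task is to interpolate the assignment $(d,g(d))\mapsto f(d)$, whose outputs differ from the second coordinate only within $\mu$. Here I would split according to the type of the prime quotient $\quotienttct{\bottom{A}}{\mu}$, which in a Mal'cev algebra can only be of type $\mathbf{2}$ or type $\mathbf{3}$. If $\quotienttct{\bottom{A}}{\mu}$ is non-abelian, hence of type $\mathbf{3}$, its minimal sets carry a two-element Boolean structure and the polynomial operations are rich enough, using no further hypothesis, to realise an arbitrary within-class correction compatible with the congruences; this is the ``free'' case. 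If $\quotienttct{\bottom{A}}{\mu}$ is abelian, of type $\mathbf{2}$, then (AB2) applied to $\bottom{A}\prec\mu$ forces $\card{a/\mu}\in\{1,2\}$ for every $a\in A$: each $\mu$-class has at most two elements, and the type-$\mathbf{2}$ structure turns a two-element class into a one-dimensional $\GF{2}$-vector space whose nontrivial translation is an involution $\iota$. Thus the correction reduces to choosing, for each $d$, either $g(d)$ or $\iota(g(d))$, i.e.\ to realising a single selective flip.

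The main obstacle is exactly this abelian correction: one must exhibit a single polynomial that applies $\iota$ precisely at the points where $f$ and $g$ differ. I would encode the choice by the $\{0,1\}$-valued selector $s$ determined by $f(d)=g(d)+s(d)\,v$, with $v$ the nonzero vector of the module attached to $\mu$, and show that $s$ is induced by a polynomial into that $\GF{2}$-module. This is where both hypotheses are used together: congruence preservation of $f$, combined with that of $g$, forces $s$ to respect all congruences, and (SC1), through the centralizer bound $(\mu':(\mu')^{+})\le(\mu')^{+}$ on the strictly meet irreducible congruences $\mu'$ relevant to $\mu$, rules out any hidden central congruence that would let $s$ be an arbitrary compatible map; $s$ is thereby forced to be affine over the module, and since that module is over $\GF{2}$ every affine map is realisable, so $s$ is polynomial and, via the Mal'cev polynomial $m(x,y,z)$, so is the flip. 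Combining the lifted $g$ with this realised correction produces $h$, closing the induction. The delicate points to be filled in are the inheritance lemma for (SC1), the identification of $\iota$ with an honest polynomial of $\ab{A}$ acting simultaneously on all two-element $\mu$-classes, and, above all, the commutator-theoretic argument — replacing the ideal arithmetic available for expanded groups in \cite{AicIdz04} — that converts the centralizer bound coming from (SC1) into the affineness of the selector $s$.
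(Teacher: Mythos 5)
Your high-level scheme (induction on $\card{A}$, quotient by a congruence $\mu$, interpolate modulo $\mu$, then correct inside the $\mu$-classes) is the paper's scheme, but there are two genuine gaps. First, you quotient by a \emph{minimal nonzero} congruence, whereas the paper quotients by a \emph{homogeneous} one (Definition~\ref{def:homogeneous}), i.e.\ the join of a whole projectivity class of atoms. This is not cosmetic: an atom that is projective to another atom is not homogeneous, and then $\mu\wedge\mu^*=\bottom{A}$ (Lemma~\ref{lemma:whymumeetmustariszero}) fails — already for $\ab{A}=\Z_2\times\Z_2$ and $\mu=\Z_2\times 0$ one has $\mu^*=\uno{A}$ and $\mu\wedge\mu^*=\mu$. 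That identity, together with the splitting ``$\alpha\geq\mu$ or $\alpha\leq\mu\vee\mu^*$'' and the existence of an idempotent polynomial with range $o/\mu$ (Theorem~\ref{teor:existence_of_idempotent_class_homogeneous}), is exactly what the correction step leans on in Propositions~\ref{prop:qui_si_interpola_nel_caso_abeliano_le_funzioni_compatibili_definite_sui_coset_del_centralizzatore} and~\ref{prop:interpolating_on_cosets_abelian_case}; with an atom in place of a homogeneous congruence these tools are unavailable, and your claim that (AB2) reduces the abelian case to two-element $\mu$-classes, while true for an atom, does not by itself make the correction tractable.

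Second, and more seriously, the step you yourself flag as ``the main obstacle'' — showing that the selector $s$ (equivalently, a congruence-preserving partial function whose image lies in a single $\mu$-class) is induced by a polynomial — is asserted rather than proved: ``$s$ is thereby forced to be affine over the module'' is not an argument, and it does not even typecheck, since $s$ is defined on a subset of $A$ and $A$ carries no module structure. This is precisely where the bulk of the paper lives: the coordinatization of $o/\mu$ as a matrix module via Jacobson density (Proposition~\ref{prop:identification_with_matrices}), the minimal-set analysis identifying $o/\mu$ with a $\quotienttct{\bottom{A}}{\mu}$-minimal set under (AB2) (Theorem~\ref{teor:ABp_Implies_polynomial_equivalence}), the decomposition of a $(\bottom{A}:\mu)$-class as $d(u,v,u^*)$ with $u\in v/\mu$, $u^*\in v/\mu^*$, and above all the commutator-theoretic induction of Lemma~\ref{lemma.come_si_fanno_le_induzioni}, which is also needed in the non-abelian case (your ``free'' type-$\mathbf{3}$ case is in fact a full induction, Proposition~\ref{prop:interpolation_in_a_class_nonAbelia_case}). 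Until that machinery, or a substitute for it, is supplied, the proposal is an outline of the strategy with its hardest part missing.
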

A proof will be given at the end of Section~\ref{sec:proof_theorem_strict_affine_compl}.
In Section~\ref{sec:concluding_remarks} we will use 
Theorem~\ref{teor.main_theorem_sc1_ab2_imply_compl}
to characterize strictly $1$-affine complete congruence regular
Mal'cev algebras:
\begin{theorem}\label{theorem:Malcev_and_regular_characterization_s1ac}
Let $\ab{A}$ be a finite congruence regular Mal'cev algebra. 
Then the following are equivalent:
\begin{enumerate}
\item $\ab{A}$ is strictly 1-affine complete;\label{item:s1ac:theorem:MMalcev_and_regular_characterization_s1ac}
\item $\ab{A}$ satisfies (SC1) and (AB2).\label{item:ab2and sc1:theorem:Malcev_and_regular_characterization_s1ac}
\end{enumerate}
\end{theorem}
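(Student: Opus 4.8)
The implication ``(SC1) and (AB2) $\Rightarrow$ strict $1$-affine completeness'' is immediate, since a finite congruence regular Mal'cev algebra is in particular a finite Mal'cev algebra and Theorem~\ref{teor.main_theorem_sc1_ab2_imply_compl} applies verbatim. The whole content of the statement therefore lies in the converse, which I would prove in contrapositive form: if $\ab{A}$ violates (SC1) or (AB2), then I exhibit a unary partial compatible function on a subset of $A$ that is interpolated by no polynomial of $\ab{A}$, so that $\ab{A}$ is not strictly $1$-affine complete. The purpose of the congruence regularity hypothesis is to force the local structure witnessing either failure to behave like a normal subgroup inside an expanded group, so that the necessity half of \cite[Theorem~1.3]{AicIdz04} can be carried over; note that regularity is needed only here, consistently with the fact that the sufficiency direction holds for arbitrary finite Mal'cev algebras.

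Suppose first that (AB2) fails. By definition there is an abelian prime quotient $\quotienttct{\alpha}{\beta}$ in $\interval{\bottom{A}}{\uno{A}}$ and an element $a\in A$ with $c:=\card{(a/\alpha)/(\beta/\alpha)}\notin\{1,2\}$, hence $c\geq 3$. Since $[\beta,\beta]\leq\alpha$ and $\ab{A}$ is Mal'cev, the quotient $\quotienttct{\alpha}{\beta}$ has type $2$, so a trace inside the block $a/\beta$, read modulo $\alpha$, is polynomially equivalent to the simple module $\GF{q}$ with $q=c\geq 3$ (cf.~\cite[Chapter~4]{HobbMcK}). As the quotient is prime, every selfmap of this module is compatible, while the polynomials of $\ab{A}$ act on it by affine maps, of which there are at most $q^2<q^q$; I would therefore pick a compatible non-affine assignment on three $\alpha$-inequivalent, $\beta$-equivalent elements of the block and take this as a partial function $f$, keeping all its values inside $a/\beta$. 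Here congruence regularity is used to ensure that the only congruences relating or separating these representatives are those comparable with $\alpha$ and $\beta$, so that no \emph{skew congruence} obstructs the compatibility of $f$; since $f$ agrees with no affine map it agrees with no polynomial, and strict $1$-affine completeness fails.

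Now suppose (SC1) fails, so that some strictly meet irreducible $\mu$ has $(\mu:\mu^+)\not\leq\mu^+$. As $\mu^+$ is the unique cover of $\mu$, every congruence strictly above $\mu$ already lies above $\mu^+$; hence $\eta:=(\mu:\mu^+)$ satisfies $\eta>\mu^+$, and from $\mu^+\leq\eta$ together with $[\eta,\mu^+]\leq\mu$ one reads off that $\quotienttct{\mu}{\mu^+}$ is abelian while $\eta$ centralises $\mu^+$ over $\mu$. Choosing a pair in $\eta\setminus\mu^+$ and representatives of $\mu^+$-classes inside an $\eta$-block over $\mu$, I would define a partial map whose compatibility is forced by the centralising relation $[\eta,\mu^+]\leq\mu$ (read through the commutator description of \cite[Proposition~4.2]{FreMcK87}) but which no polynomial realises, using strict meet irreducibility to pin $\mu^+$ down as the unique cover and congruence regularity to prevent the missing affine structure from being recovered polynomially. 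This is the step I expect to be the main obstacle: unlike the (AB2) case, the relevant module is not localized on a single prime quotient but is spread through the gap between $\mu^+$ and $\eta$, so that verifying both compatibility and non-interpolability simultaneously calls for careful commutator bookkeeping, together with the regularity hypothesis, in order to emulate the normal-subgroup arguments available to Aichinger and Idziak in the expanded-group setting.

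Taken together, the two cases establish the contrapositive of ``strict $1$-affine completeness $\Rightarrow$ (SC1) and (AB2)''; combined with the sufficiency furnished by Theorem~\ref{teor.main_theorem_sc1_ab2_imply_compl}, this yields the asserted equivalence.
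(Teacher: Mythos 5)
Your reduction of the problem is sound: the implication from (SC1)+(AB2) to strict $1$-affine completeness is indeed just Theorem~\ref{teor.main_theorem_sc1_ab2_imply_compl}, and the converse must be proved by exhibiting non-interpolable compatible unary partial functions. However, both halves of your contrapositive argument have genuine gaps.

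For the necessity of (AB2), the paper's Proposition~\ref{prop:AB2_necessaria} proves this for \emph{every} finite Mal'cev algebra, with no regularity hypothesis, by an explicit witness: after reducing to $\beta$ join irreducible with $\alpha=\beta^-$, it takes $b_1,b_2\in o/\beta$ with $b_1,b_2,d(o,b_1,o)$ pairwise $\alpha$-inequivalent and defines $f$ on the four points $\{o,b_1,b_2,d(b_1,o,b_2)\}$ by sending the first three to $o$ and the last to $b_1$; compatibility is verified by showing that \emph{each} pair among these points whose $f$-images differ generates exactly $\beta$, and non-interpolability follows from Lemma~\ref{lemma:ex_citation_to_prop2.6Aic06}. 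Your version replaces this with a counting argument on a module $\GF{q}$ with $q=c$, but the identification of $(a/\alpha)/(\beta/\alpha)$ with $\GF{c}$ is unjustified ($c$ need not be prime, and the $\beta$-class modulo $\alpha$ need not be a single trace); the bound $q^2<q^q$ counts total self-maps rather than assignments on three chosen points; and the compatibility of an \emph{arbitrary} non-affine assignment is exactly what requires proof --- it is the specific choice of the four points above that makes every congruence linking two of them contain $\beta$. Invoking regularity here is a symptom of the gap rather than a cure: the statement is true without it.

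For the necessity of (SC1), which is where regularity is actually used, you have correctly located the difficulty but not resolved it; your paragraph describes what a construction would have to achieve without producing one. The paper's route is structurally different: it first reformulates the failure of (SC1) via Lemma~\ref{lemma:prop3.3AicIdz04implicazione_da_due_a_uno} as the existence of join irreducible $\alpha,\beta$ with $[\alpha,\beta]\leq\alpha^-\prec\alpha\leq\beta^-\prec\beta$, then proves (Proposition~\ref{prop:condition_on_sc1_failurs}) that in a strictly $1$-affine complete algebra no such pair admits generating pairs $\Cg{\{(a_1,a_2)\}}=\alpha$ and $\Cg{\{(a_1,b)\}}=\beta$ anchored at a common point $a_1$ --- again via a four-point partial function $\{a,o,b,d(a,o,b)\}\to\{a,o\}$ and Lemma~\ref{lemma:ex_citation_to_prop2.6Aic06}. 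Congruence regularity enters only at the last step, to manufacture such a common anchor: $\alpha$ is principal, say $\alpha=\Cg{\{(a_1,a_2)\}}$, and regularity guarantees $a_1/\beta\neq a_1/\beta^-$, so some $b\in a_1/\beta\setminus a_1/\beta^-$ gives $\Cg{\{(a_1,b)\}}=\beta$ by join irreducibility. Without this reformulation and the anchored-generator construction, your argument for the (SC1) half does not go through.
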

Moreover, we will explain how 
Theorem~\ref{teor.main_theorem_sc1_ab2_imply_compl}
is related to 
a possible solution of Problem~\ref{problem:char_s1acMalcev}.
Finally, in Corollary~\ref{cor:loops} we specify our results to
the case of finite loops, extending 
\cite[Corollary~11.2]{AicIdz04} from finite groups to finite loops.

\section{Abelian congruences in Mal'cev algebras}\label{sec:preliminary_on_abelian_congruences}
In this section we recall some basic facts on abelian 
congruences in Mal'cev algebras. 
The fact that every abelian algebra in a congruence modular 
variety is affine was first
proved in \cite{Her79}. 
For more details on abelian algebras in congruence 
modular varieties we refer the reader to \cite[Chapter~5]{FreMcK87}. 
The coordinatization of abelian congruences that we report can also 
be found in \cite{Fre83, Aic18}.
We start with two preliminary results about commutators and 
centralizers in Mal'cev algebras. 
\begin{lemma}[{cf.~\cite[Proposition~2.6]{Aic06}}]\label{lemma:ex_citation_to_prop2.6Aic06}
Let $k\in\N$, let~$\ab{A}$ be an algebra
with a Mal'cev
polynomial~$d$, let $\alpha,\beta\in\Con\ab{A}$, and let
$p\in\POL\ari{k}\ab{A}$. For all $\vec{u},\vec{v},\vec{w}\in A^k$ such
that $\vec{u}\equiv_\alpha\vec{v}\equiv_\beta\vec{w}$, we have
\[
d(p(\vec{u}),p(\vec{v}),p(\vec{w}))
\equiv_{[\alpha,\beta]}
p(d(u_1,v_1,w_1),d(u_2,v_2,w_2),\dotsc,d(u_k,v_k,w_k)).
\]
\end{lemma}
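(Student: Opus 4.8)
The plan is to derive the congruence from the term-condition characterization of the commutator: since $\ab{A}$ generates a congruence permutable, hence modular, variety, $[\alpha,\beta]$ is the least congruence $\delta$ for which $\alpha$ centralizes $\beta$ modulo $\delta$, i.e.\ for which the term condition $C(\alpha,\beta;\delta)$ holds. I would work with the two $2k$-ary polynomials
\[
f(\vec{x},\vec{y}) = d\bigl(p(\vec{x}),p(\vec{v}),p(\vec{y})\bigr),
\qquad
h(\vec{x},\vec{y}) = p\bigl(d(x_1,v_1,y_1),\dotsc,d(x_k,v_k,y_k)\bigr),
\]
so that $f(\vec{u},\vec{w})$ is the left-hand side and $h(\vec{u},\vec{w})$ is the right-hand side of the claim. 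Using only the Mal'cev identities $d(a,a,b)=b$ and $d(a,b,b)=a$, one checks the boundary values $f(\vec{v},\vec{y})=h(\vec{v},\vec{y})=p(\vec{y})$ and $f(\vec{x},\vec{v})=h(\vec{x},\vec{v})=p(\vec{x})$; that is, $f$ and $h$ agree whenever $\vec{x}=\vec{v}$ or $\vec{y}=\vec{v}$. The same identities show that both sides of the claim are already congruent modulo $\alpha\wedge\beta$.

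Next I would form the polynomial $g(\vec{x},\vec{y})=d\bigl(f(\vec{x},\vec{y}),h(\vec{x},\vec{y}),p(\vec{v})\bigr)$ and feed it into the term condition, taking $\vec{x}$ as the block of $\alpha$-variables (with values $\vec{v}\equiv_\alpha\vec{u}$) and $\vec{y}$ as the block of $\beta$-variables (with values $\vec{v}\equiv_\beta\vec{w}$). Because $f$ and $h$ coincide on the boundary $\vec{x}=\vec{v}$, the entire row $g(\vec{v},\vec{y})$ collapses to the constant $p(\vec{v})$; in particular $g(\vec{v},\vec{v})$ and $g(\vec{v},\vec{w})$ are trivially $[\alpha,\beta]$-related. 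Since $C(\alpha,\beta;[\alpha,\beta])$ holds, the opposite row must be $[\alpha,\beta]$-related as well, so $g(\vec{u},\vec{v})\equiv_{[\alpha,\beta]}g(\vec{u},\vec{w})$. Using the boundary value $f(\vec{u},\vec{v})=h(\vec{u},\vec{v})=p(\vec{u})$ to simplify $g(\vec{u},\vec{v})=p(\vec{v})$, this reads
\[
d\bigl(f(\vec{u},\vec{w}),h(\vec{u},\vec{w}),p(\vec{v})\bigr)\equiv_{[\alpha,\beta]}p(\vec{v}).
\]

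The remaining step, which I expect to be the main obstacle, is to cancel $p(\vec{v})$ and pass from this congruence to $f(\vec{u},\vec{w})\equiv_{[\alpha,\beta]}h(\vec{u},\vec{w})$. This does not follow from the Mal'cev identities alone, since $d(a,b,c)=c$ need not force $a=b$ for an arbitrary Mal'cev term; it is exactly here that the commutator hypothesis does its work. I would handle it by passing to the quotient $\ab{A}/[\alpha,\beta]$, in which the images of $\alpha$ and $\beta$ centralize one another, so that the ternary difference operation induced by $d$ on the relevant blocks is affine and hence cancellative; the displayed congruence then becomes the equality $\overline{f(\vec{u},\vec{w})}=\overline{h(\vec{u},\vec{w})}$, which is the assertion. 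I would note, finally, that taking $\vec{x}$ and $\vec{y}$ to be full $k$-blocks settles all arities at once, so no separate induction on $k$ is required.
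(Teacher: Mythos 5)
The paper itself contains no proof of this lemma---it is imported verbatim from \cite[Proposition~2.6]{Aic06}---so your argument can only be judged on its own merits. Your setup is sound: the boundary computations for $f$ and $h$ are correct, and the term-condition step does yield $d(L,R,p(\vec{v}))\equiv_{[\alpha,\beta]}p(\vec{v})$, where $L$ and $R$ denote the left- and right-hand sides of the claim. The gap is exactly where you located it, and the repair you propose does not close it. In $\ab{A}/[\alpha,\beta]$ the congruence $\alpha\wedge\beta$ becomes abelian and $L\equiv_{\alpha\wedge\beta}R$, but $p(\vec{v})$ is only $(\alpha\circ\beta)$-related to $L$ and $R$, and $\alpha\vee\beta$ is \emph{not} abelian modulo $[\alpha,\beta]$; so the triple $(L,R,p(\vec{v}))$ does not lie in a single class carrying the affine structure of Lemmas~\ref{lemma:piu_emeno_fanno_gruppoo_abeliano_nella_classe_congruenza} and~\ref{lemma:piu_meno_fanno_modulo_su-anello_polinomi_ristretto_nella_classe_congruenza}. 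Moreover, even where affineness is available, $x\mapsto d(x,R,p(\vec{v}))$ is an affine map between two \emph{different} module classes, and affine maps need not be injective: you would have to show that its linear part is invertible (in fact, the identity), which is essentially the assertion being proved. ``Affine, hence cancellative'' is therefore not a proof but a restatement of the remaining difficulty.

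The cancellation can be carried out, but it costs another term-condition computation, and it is cleaner if you change the anchor of $g$. Take $g(\vec{x},\vec{y}):=d(f(\vec{x},\vec{y}),h(\vec{x},\vec{y}),p(\vec{x}))$; the row $g(\vec{v},\cdot)$ is still constantly $p(\vec{v})$, and the same argument now gives $d(L,R,p(\vec{u}))\equiv_{[\alpha,\beta]}p(\vec{u})$, with the advantage that $L\equiv_{\beta}p(\vec{u})$, $R\equiv_{\beta}p(\vec{u})$ and $L\equiv_{\alpha}R$. Now apply the term condition once more to $t(s,z):=d(d(s,R,z),z,R)$, with $s$ the $\alpha$-variable running from $R$ to $L$ and $z$ the $\beta$-variable running from $R$ to $p(\vec{u})$: the row $t(R,\cdot)$ is constantly $R$, so $t(L,R)\equiv_{[\alpha,\beta]}t(L,p(\vec{u}))$, that is, $L\equiv_{[\alpha,\beta]}d(d(L,R,p(\vec{u})),p(\vec{u}),R)$. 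On the other hand, applying the unary polynomial $z\mapsto d(z,p(\vec{u}),R)$ to the congruence $d(L,R,p(\vec{u}))\equiv_{[\alpha,\beta]}p(\vec{u})$ shows that the same element is $\equiv_{[\alpha,\beta]}d(p(\vec{u}),p(\vec{u}),R)=R$. Hence $L\equiv_{[\alpha,\beta]}R$, as required. With your original anchor $p(\vec{v})$ the same idea still works, but it needs an additional intermediate term-condition step precisely because $p(\vec{v})$ is not $\beta$-related (nor $\alpha$-related) to $R$.
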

\begin{proposition}[{cf.~\cite[Proposition~4.3]{FreMcK87}}]\label{prop:commutator_lattice}
Let $\ab{A}$ be a Mal'cev algebra, and let $[\cdot, \cdot  ]$ 
be the term condition commutator operation on $\Con \ab{A}$ 
as defined in \cite[Definition~4.150]{McKMcnTay88}. 
Then $(\Con \ab{A}; \wedge, \vee, [\cdot, \cdot])$ is 
a commutator lattice as defined in \cite[Definition~3.1]{Aic18}.
Moreover, the centralizer operation $(\cdot : \cdot)$,
as defined in \cite[Definition~4.150]{McKMcnTay88}, 
coincides with the residuation defined in \cite[Section~3]{Aic18}.
\end{proposition}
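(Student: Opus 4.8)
The plan is to check directly the defining properties of a commutator lattice from \cite[Definition~3.1]{Aic18} for the structure $(\Con\ab{A};\wedge,\vee,[\cdot,\cdot])$, and then to deduce the coincidence of the centralizer with the residuation as a purely order-theoretic consequence. The lattice-theoretic requirement is immediate: $\Con\ab{A}$ is an algebraic lattice, hence complete. What remains is to verify that the term condition commutator is monotone, lies below the meet, is symmetric, and distributes over arbitrary joins in each argument.

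First I would dispose of the two easy properties. Monotonicity and the inequality $[\alpha,\beta]\le\alpha\wedge\beta$ follow straight from the definition of the commutator via the underlying term condition: enlarging $\alpha$ or $\beta$ only weakens the condition, and a direct substitution shows that $[\alpha,\beta]$ is contained in both $\alpha$ and $\beta$. These arguments use nothing beyond the definition and are therefore valid for an arbitrary algebra.

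The substantive points are symmetry, $[\alpha,\beta]=[\beta,\alpha]$, and complete join-distributivity, $[\alpha,\bigvee_i\beta_i]=\bigvee_i[\alpha,\beta_i]$; here the Mal'cev polynomial $d$ does the work. For symmetry I would run the usual congruence-modular argument, replacing every appeal to a Mal'cev term by the polynomial $d$: the identity of Lemma~\ref{lemma:ex_citation_to_prop2.6Aic06} is precisely the form in which $d$ transports the commutator through polynomials, and it is this identity that powers the symmetry proof. For join-distributivity the inclusion $\bigvee_i[\alpha,\beta_i]\le[\alpha,\bigvee_i\beta_i]$ is just monotonicity, while the reverse inclusion follows from the standard Mal'cev computation specialized to $d$. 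I would cite \cite{Aic06} for the single-algebra forms of these statements. This is the heart of the matter: the classical results in \cite{FreMcK87} are formulated for congruence modular \emph{varieties}, whereas our $\ab{A}$ need only possess a Mal'cev polynomial and need not generate such a variety, so the proofs must be localized so as to use only the polynomials of $\ab{A}$. I expect this transfer to be the main obstacle, and it is exactly why Lemma~\ref{lemma:ex_citation_to_prop2.6Aic06} and \cite{Aic06} are invoked in place of the variety-level machinery of \cite{FreMcK87}.

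Finally, once complete join-distributivity is established, the second assertion is automatic. The map $\eta\mapsto[\eta,\beta]$ preserves arbitrary joins, so the set $\{\eta\in\Con\ab{A}:[\eta,\beta]\le\alpha\}$ is closed under joins and hence contains its own join as a greatest element. By definition this greatest element is the centralizer $(\alpha:\beta)$, and it is also exactly the value $\bigvee\{\eta:[\eta,\beta]\le\alpha\}$ that \cite[Section~3]{Aic18} takes as the residuation. Therefore the centralizer and the residuation coincide, which completes the verification.
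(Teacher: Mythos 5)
Your proposal is correct in outline, but it is worth noting that the paper does not prove this proposition at all: it is recorded as a known fact, with the header citation to \cite[Proposition~4.3]{FreMcK87} serving as the entire justification (no proof environment follows the statement). So you are doing strictly more work than the paper, and you do it in the right way: you correctly identify that the only nontrivial axioms are symmetry and complete join-distributivity, that $[\alpha,\beta]\le\alpha\wedge\beta$ and monotonicity are definitional for the term condition commutator in any algebra, and--crucially--that the results of \cite{FreMcK87} are stated for congruence modular \emph{varieties} and must be localized to a single algebra possessing a Mal'cev \emph{polynomial}, which is exactly what \cite{Aic06} and Lemma~\ref{lemma:ex_citation_to_prop2.6Aic06} provide. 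This is the same division of labour the paper relies on implicitly, since elsewhere it cites \cite[Proposition~2.3]{Aic06}, \cite[Lemma~2.5]{Aic06} and \cite[Proposition~2.5]{Aic06} for precisely these single-algebra facts.

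One step is too quick. In your final paragraph you say that the greatest element of $\{\eta\in\Con\ab{A}\mid[\eta,\beta]\le\alpha\}$ is the centralizer ``by definition.'' That matches the definition the paper gives in its introduction, but the proposition explicitly refers to \cite[Definition~4.150]{McKMcnTay88}, where $(\alpha:\beta)$ is the largest $\eta$ satisfying the term condition $C(\eta,\beta;\alpha)$, not the largest $\eta$ with $[\eta,\beta]\le\alpha$. The implication $C(\eta,\beta;\alpha)\Rightarrow[\eta,\beta]\le\alpha$ is immediate from the definition of $[\eta,\beta]$ as the least $\delta$ with $C(\eta,\beta;\delta)$, but the converse is not formal: the set of $\delta$ with $C(\eta,\beta;\delta)$ need not be upward closed in a general algebra. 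For a Mal'cev algebra this converse does hold, and it is again a single-algebra localization of \cite[Proposition~4.2]{FreMcK87} available through \cite[Proposition~2.3]{Aic06}. You should add one sentence closing this loop; with that, your argument is complete.
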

Let $\ab{A}$ be a Mal'cev algebra with Mal'cev 
polynomial $d$, and let $o\in A$.
Then we define two binary operations on $A$ as follows:
For all $x_1, x_2\in A$ we let
\begin{equation}\label{eq:equazione_che_definisce_il_gruppo_abeliano_del_termine_di_Malcev}
\begin{split}
x_1+_ox_2&:=d(x_1, o, x_2)\text{ and }\\
x_1 -_o x_2&:=d(x_1, x_2, o).
\end{split}
\end{equation}
For $x\in A$, we let $-_o x:=o-_ox$.
The following lemma goes back to \cite{Her79,Fre83}; a proof that relies only
on the definition of the commutator operation can be found in \cite[Section~3]{Aic19a}. 
\begin{lemma}\label{lemma:piu_emeno_fanno_gruppoo_abeliano_nella_classe_congruenza}
Let $\ab{A}$ be a Mal'cev algebra, let $\alpha\in \Con\ab{A}$ with
$[\alpha, \alpha]=\bottom{A}$, let $o\in A$, and let $+_o$ and $-_o$
be defined as in \eqref{eq:equazione_che_definisce_il_gruppo_abeliano_del_termine_di_Malcev}.
Then $(o/\alpha; +_o, -_o, o)$ is an abelian group.  
\end{lemma}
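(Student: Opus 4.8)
The plan is to reduce everything to a single ``medial'' identity for the Mal'cev polynomial $d$ on the block $o/\alpha$, obtained by feeding $d$ itself into Lemma~\ref{lemma:ex_citation_to_prop2.6Aic06}. First I would record two immediate consequences of the defining Mal'cev identities $d(x,y,y)=x$ and $d(x,x,y)=y$. The operation $+_o$ is closed on $o/\alpha$: since $d$ preserves $\alpha$ and $d(o,o,o)=o$, any value $d(x_1,o,x_2)$ with $x_1,x_2\in o/\alpha$ is again $\alpha$-related to $o$; and $o$ is a two-sided identity, because $o+_ox=d(o,o,x)=x$ and $x+_oo=d(x,o,o)=x$. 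The same identities give $o-_ox=d(o,x,o)=-_ox$, matching the definition of $-_o x$ in \eqref{eq:equazione_che_definisce_il_gruppo_abeliano_del_termine_di_Malcev}.

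The key step is to apply Lemma~\ref{lemma:ex_citation_to_prop2.6Aic06} with $k=3$, $\beta=\alpha$, and the ternary polynomial $p=d$. Since $[\alpha,\alpha]=\bottom{A}$, the congruence $[\alpha,\beta]$ is trivial and the conclusion becomes an honest equation: for all componentwise $\alpha$-related triples $\vec u,\vec v,\vec w\in(o/\alpha)^3$,
\[
d\bigl(d(u_1,u_2,u_3),\,d(v_1,v_2,v_3),\,d(w_1,w_2,w_3)\bigr)
=
d\bigl(d(u_1,v_1,w_1),\,d(u_2,v_2,w_2),\,d(u_3,v_3,w_3)\bigr).
\]
I will call this the \emph{medial law}. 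Every element appearing below lies in $o/\alpha$ and is therefore $\alpha$-related to $o$, so the hypothesis of the lemma is always satisfied.

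From the medial law I would first extract an exchange law. Writing $o=d(o,o,o)$ and $x_3=d(x_3,o,o)$, and instantiating the medial law at $\vec u=(x_1,o,x_2)$, $\vec v=(o,o,o)$, $\vec w=(x_3,o,o)$, one obtains $(x_1+_ox_2)+_ox_3=(x_1+_ox_3)+_ox_2$. Setting $x_1=o$ and using that $o$ is a left identity yields commutativity $x_2+_ox_3=x_3+_ox_2$; combining the exchange law with commutativity then gives associativity in three lines, namely $x_1+_o(x_2+_ox_3)=(x_2+_ox_3)+_ox_1=(x_2+_ox_1)+_ox_3=(x_1+_ox_2)+_ox_3$. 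For inverses, instantiating the medial law at $\vec u=(x,o,o)$, $\vec v=(o,o,o)$, $\vec w=(o,x,o)$ produces $x+_o(-_ox)=d(x,x,o)=o$, and commutativity supplies the other side. Hence $(o/\alpha;+_o,-_o,o)$ is an abelian group.

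I do not expect a genuine obstacle here: once the medial law is in hand, the argument is purely equational. The only point requiring care is the bookkeeping in each instantiation—rewriting the plain elements $o$ and $x_i$ as values $d(\cdot,\cdot,\cdot)$ through the Mal'cev identities so that the medial law applies, and checking that each tuple used is componentwise $\alpha$-related to $o$, so that Lemma~\ref{lemma:ex_citation_to_prop2.6Aic06} is legitimately invoked.
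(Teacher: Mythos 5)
Your proof is correct. Note that the paper does not actually prove this lemma: it attributes the result to Herrmann and Freese and points to \cite[Section~3]{Aic19a} for a proof ``that relies only on the definition of the commutator operation.'' Your argument is exactly of that kind --- you derive the medial (interchange) law for $d$ on $o/\alpha$ by applying Lemma~\ref{lemma:ex_citation_to_prop2.6Aic06} to $p=d$ with $\beta=\alpha$ and $[\alpha,\alpha]=\bottom{A}$, and then all the group axioms follow by the instantiations you list; I checked each one (the exchange law, commutativity, associativity, and $x+_o(-_ox)=d(x,x,o)=o$) and the bookkeeping is right, with every tuple used lying componentwise in $o/\alpha$ so the hypothesis $\vec u\equiv_\alpha\vec v\equiv_\alpha\vec w$ is met. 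The only cosmetic omission is that closure of $o/\alpha$ under $-_o$ is not stated explicitly, but it follows by the same one-line compatibility argument you give for $+_o$.
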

Let $\ab{A}$ be a Mal'cev algebra, let $\alpha\in \Con\ab{A}$ with
$[\alpha, \alpha]=\bottom{A}$, let $o\in A$, let $+_o$ and $-_o$
be defined as in \eqref{eq:equazione_che_definisce_il_gruppo_abeliano_del_termine_di_Malcev}.
We define
\begin{equation}\label{eq:equazione_che_definisce_l'universo_dell'anello_dei_polinomi_ristretti}
R_o=\{p\restrict{o/\alpha}\mid p\in\POL\ari{1}\ab{A}\text{ and }p(o)=o\}.
\end{equation}
Furthermore, on $R_o$ we define two operations as follows:
For all $p,q\in R_o$ we let 
\begin{equation}\label{eq:equazione_che_definisce_le_operazioni_dell'anello_dei_polinomi_ristretti}
\begin{split}
(p+q)(x)&:=p(x)+_o q(x)=d(p(x),o,q(x))\text{ and } \\
(p\circ q)(x)&:=p(q(x)), \text{ for all } x\in A.
\end{split}
\end{equation} 
We define $\ab{R}_o:=(R_o; +, \circ)$,
and its action on
$o/\alpha$ as follows: For all $r\in R_o$ and $x\in o/\alpha$ we let 
$r\cdot x:=r(x)$.  
The next lemma follows from \cite{Aic19a}. 
\begin{lemma}\label{lemma:piu_meno_fanno_modulo_su-anello_polinomi_ristretto_nella_classe_congruenza}
Let $\ab{A}$ be a Mal'cev algebra, let $\alpha\in \Con\ab{A}$ with
$[\alpha, \alpha]=\bottom{A}$, let $o\in A$, let $+_o$ and $-_o$
be defined as in \eqref{eq:equazione_che_definisce_il_gruppo_abeliano_del_termine_di_Malcev},
and let $\ab{R}_o$ be defined as in
\eqref{eq:equazione_che_definisce_l'universo_dell'anello_dei_polinomi_ristretti}
and \eqref{eq:equazione_che_definisce_le_operazioni_dell'anello_dei_polinomi_ristretti}. Then 
$\ab{R}_o$ is a ring with unity and $(o/\alpha; +_o)$ is a module over $\ab{R}_o$. 
Moreover, the algebra $\ab{A}\restrict{o/\alpha}$ is polynomially equivalent to 
the $\ab{R}_o$-module $(o/\alpha; +_o)$. 
\end{lemma}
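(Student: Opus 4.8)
The plan is to derive everything from the observation that, since $[\alpha,\alpha]=\bottom{A}$, Lemma~\ref{lemma:ex_citation_to_prop2.6Aic06} applied with $\beta=\alpha$ becomes an \emph{exact} identity: for every $p\in\POL\ari{k}\ab{A}$ and all $\vec{u}\equiv_\alpha\vec{v}\equiv_\alpha\vec{w}$ in $A^k$ we have $d(p(\vec{u}),p(\vec{v}),p(\vec{w}))=p(d(u_1,v_1,w_1),\dots,d(u_k,v_k,w_k))$; I will call this the \emph{linearity identity}. Two consequences are recorded first. Taking $p=d$ and the tuples $(a,b,o),(o,o,o),(o,o,c)$ shows that on the abelian group $(o/\alpha;+_o,-_o,o)$ of Lemma~\ref{lemma:piu_emeno_fanno_gruppoo_abeliano_nella_classe_congruenza} the Mal'cev operation is affine, $d(a,b,c)=(a-_ob)+_oc$. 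Taking $k=1$ and $(\vec{u},\vec{v},\vec{w})=(x,o,y)$ shows that each $p\in R_o$ is an endomorphism of $(o/\alpha;+_o)$, since $p(x+_oy)=d(p(x),o,p(y))=p(x)+_op(y)$ (using $p(o)=o$, which also guarantees $p(o/\alpha)\subseteq o/\alpha$).

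With this in hand the ring axioms follow routinely. Closure of $R_o$ under $+$ and $\circ$, and membership of the constant map $o$ and of the identity map, are checked by exhibiting the relevant functions as restrictions of unary polynomials fixing $o$ (for instance $p+q$ is the restriction of $x\mapsto d(\hat{p}(x),o,\hat{q}(x))$, where $\hat{p},\hat{q}\in\POL\ari{1}\ab{A}$ induce $p,q$). That $(R_o;+)$ is an abelian group and $(R_o;\circ)$ a monoid with unit the identity is immediate from the pointwise definition of $+$ and the group structure on $o/\alpha$. Right distributivity is definitional, and left distributivity $p\circ(q+r)=(p\circ q)+(p\circ r)$ is precisely the additivity of $p$ established above. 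Hence $\ab{R}_o$ is a ring with unity, and the four module axioms for $(o/\alpha;+_o)$ over $\ab{R}_o$ reduce to this same additivity together with the definitions of $+$ and $\circ$.

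For polynomial equivalence, the easy inclusion is that each basic operation of the $\ab{R}_o$-module is a polynomial of $\ab{A}\restrict{o/\alpha}$: the group operations are $d(x,o,y)$ and $d(o,x,o)$, constants lie in $o/\alpha$, and each scalar multiplication $r\cdot x=r(x)$ is by definition a restriction of an $\ab{A}$-polynomial. The substantial direction is to show that every $n$-ary polynomial $f$ of $\ab{A}$ with $f((o/\alpha)^n)\subseteq o/\alpha$ admits, on $(o/\alpha)^n$, an affine representation $f(x_1,\dots,x_n)=r_1(x_1)+_o\cdots+_or_n(x_n)+_oc$, where $c:=f(o,\dots,o)$ and $r_i(x):=f(o,\dots,o,x,o,\dots,o)-_oc$. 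I would first check $r_i\in R_o$: it fixes $o$, and its additivity follows from the linearity identity applied in the $i$-th coordinate. The heart of the matter is to eliminate cross terms; for the first variable I would feed the three tuples $\big((x_1,x_2,\dots,x_n),(x_1,o,\dots,o),(o,\dots,o)\big)$ into the linearity identity. Since $d(x_1,x_1,o)=o$ and $d(x_i,o,o)=x_i$, the right-hand side collapses to $f(o,x_2,\dots,x_n)$, giving $f(x_1,\dots,x_n)=f(o,x_2,\dots,x_n)+_or_1(x_1)$; iterating over the remaining variables (an induction on $n$) yields the affine form, so $f\restrict{(o/\alpha)^n}$ is a module polynomial.

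The main obstacle is exactly this elimination of cross terms. Additivity of $f$ in each variable separately is \emph{not} sufficient to force affineness — a multiplication map is additive in each argument and vanishes on the axes, yet is nonzero — so a naive argument fails. What rescues the proof is the precise shape of the linearity identity coming from $[\alpha,\alpha]=\bottom{A}$: the chosen tuples let the Mal'cev term simultaneously reset the first coordinate to $o$ while fixing the others, forcing $f(x_1,\dots,x_n)-_of(x_1,o,\dots,o)=f(o,x_2,\dots,x_n)-_oc$, i.e. the first-coordinate increment is independent of $x_2,\dots,x_n$. This is the single step where congruence abelianness is genuinely exploited; everything else is bookkeeping.
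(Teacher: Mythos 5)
Your proposal is correct. Note, however, that the paper offers no proof of this lemma at all: it simply remarks that ``the next lemma follows from \cite{Aic19a}'', so the only available comparison is with the standard coordinatization argument in that reference (going back to \cite{Her79, Fre83}). Your argument is essentially that standard argument, but it has the virtue of being derived entirely from the one tool the paper does state, namely Lemma~\ref{lemma:ex_citation_to_prop2.6Aic06} in the degenerate case $\beta=\alpha$, $[\alpha,\alpha]=\bottom{A}$. All the key steps check out: the identity $d(a,b,c)=(a-_ob)+_oc$ on $o/\alpha$ obtained from the tuples $(a,b,o),(o,o,o),(o,o,c)$; the additivity of every element of $R_o$, which simultaneously yields left distributivity and the module axioms; and, most importantly, the cross-term elimination via the tuples $(x_1,\dots,x_n)$, $(x_1,o,\dots,o)$, $(o,\dots,o)$, which gives $f(\vec{x})=f(x_1,o,\dots,o)+_of(o,x_2,\dots,x_n)-_oc$ and then the affine form by induction on the arity. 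You are also right to flag that coordinatewise additivity alone would not suffice and that this is exactly where $[\alpha,\alpha]=\bottom{A}$ is spent. Two small points you gloss over but which are easily supplied: closure of $R_o$ under additive inverses requires exhibiting $-p$ as the restriction of $x\mapsto d(o,\hat{p}(x),o)$ (which fixes $o$), and the ``easy'' direction of polynomial equivalence should be read against the Hobby--McKenzie definition of the induced algebra $\ab{A}\restrict{o/\mu}$, whose polynomials are precisely the restrictions of $\ab{A}$-polynomials preserving the class; with that reading both inclusions are exactly as you state them.
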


\section{Projective intervals in Mal'cev algebras}\label{sec:projective_intervals_in_Malcev}
In this section we investigate some properties of projective intervals 
in Mal'cev algebras. 
Given a lattice $\ab{L}$ and two intervals 
$\interval{\alpha}{\beta}$ and $\interval{\gamma}{\delta}$
we say that $\interval{\alpha}{\beta}$ 
\emph{transposes up to} $\interval{\gamma}{\delta}$,
and we write $\interval{\alpha}{\beta}\nearrow\interval{\gamma}{\delta}$,
if $\beta\vee \gamma= \delta$ and $\beta\wedge \gamma=\alpha$. 
Analogously, we say that $\interval{\alpha}{\beta}$ 
\emph{transposes down to} $\interval{\gamma}{\delta}$,
and we write $\interval{\alpha}{\beta}\searrow\interval{\gamma}{\delta}$,
if $\beta=\alpha\vee \delta$ and $\gamma=\alpha\wedge \delta$. 
We say that $\interval{\alpha}{\beta}$
and $\interval{\gamma}{\delta}$ are 
\emph{projective}, and we write 
$\interval{\alpha}{\beta}\leftrightsquigarrow\interval{\gamma}{\delta}$,
if there is a finite sequence 
\[\interval{\alpha}{\beta}=\interval{\eta_0}{\theta_0},
\interval{\eta_1}{\theta_1},\dots, 
\interval{\eta_n}{\theta_n}=\interval{\gamma}{\delta}
\]
such that $\interval{\eta_i}{\theta_i}$ transposes up or down 
to $\interval{\eta_{i+1}}{\theta_{i+1}}$ for each $i<n$. 
For 
further results
about projective intervals in modular lattices we refer the reader 
to \cite[Chapter~2]{McKMcnTay88}.
\begin{lemma}\label{lemma:exAic18Lemma3.4}
Let $\ab{A}$ be a Mal'cev algebra, let $\alpha, \beta, \gamma,\delta\in \Con\ab{A}$ 
with $\alpha\leq\beta$, $\gamma\leq\delta$, 
and $\interval{\alpha}{\beta}\leftrightsquigarrow\interval{\gamma}{\delta}$. 
Then we have:
\begin{enumerate}
\item $(\alpha:\beta)=(\gamma:\delta)$;\label{item:centralizerinprojectiveintervals}
\item $[\beta, \beta]\leq \alpha$ if and only if $[\delta, \delta]\leq \gamma$. \label{item:typesinprojactiveintervals}
\end{enumerate}
\end{lemma}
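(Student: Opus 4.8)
The plan is to reduce the statement to a single elementary transposition, since projectivity is by definition a finite chain of upward and downward transpositions, and both claimed properties—equality of centralizers and the abelianness condition $[\beta,\beta]\leq\alpha$—are symmetric equalities that propagate along such a chain by transitivity. So it suffices to prove each item under the assumption $\interval{\alpha}{\beta}\nearrow\interval{\gamma}{\delta}$ (the downward case $\searrow$ being the same statement read in reverse), that is, $\alpha=\beta\wedge\gamma$ and $\delta=\beta\vee\gamma$.

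For item~\eqref{item:centralizerinprojectiveintervals}, the key fact I would invoke is Proposition~\ref{prop:commutator_lattice}: in a Mal'cev algebra $(\Con\ab{A};\wedge,\vee,[\cdot,\cdot])$ is a commutator lattice and the centralizer $(\cdot:\cdot)$ coincides with the residuation of that commutator lattice. Centralizers in commutator lattices are known to be invariant under perspectivity; concretely, I would verify the two inequalities $(\alpha:\beta)\leq(\gamma:\delta)$ and $(\gamma:\delta)\leq(\alpha:\beta)$ directly from the defining property that $(\alpha:\beta)$ is the largest $\eta$ with $[\eta,\beta]\leq\alpha$. To show $(\alpha:\beta)\leq(\gamma:\delta)$, I would take $\eta=(\alpha:\beta)$ and estimate $[\eta,\delta]=[\eta,\beta\vee\gamma]$; using the fact that the commutator distributes over joins in a commutator lattice together with monotonicity and the modular-law identity $\alpha=\beta\wedge\gamma$, one bounds $[\eta,\delta]$ below $\gamma$. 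The reverse inequality is symmetric. I expect this step to be essentially a formal computation in the commutator lattice, and it is precisely the content one would extract from the cited framework of \cite{Aic18}.

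For item~\eqref{item:typesinprojactiveintervals}, I would again work with the single transposition $\interval{\alpha}{\beta}\nearrow\interval{\gamma}{\delta}$. The condition $[\beta,\beta]\leq\alpha$ says exactly that the quotient $\quotienttct{\alpha}{\beta}$ is abelian, equivalently $\beta\leq(\alpha:\beta)$. Using item~\eqref{item:centralizerinprojectiveintervals} one already has $(\alpha:\beta)=(\gamma:\delta)$, so it remains to transfer the containment of the endpoints: from $\beta\leq(\alpha:\beta)=(\gamma:\delta)$ and $\gamma\leq(\gamma:\delta)$ one deduces $\delta=\beta\vee\gamma\leq(\gamma:\delta)$, which is $[\delta,\delta]\leq\gamma$; the converse direction runs symmetrically using $\alpha=\beta\wedge\gamma$ and monotonicity of the commutator. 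Thus item~\eqref{item:typesinprojactiveintervals} follows cleanly once item~\eqref{item:centralizerinprojectiveintervals} is in hand, reducing the whole lemma to the centralizer computation.

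The main obstacle, and the only point demanding care, is the first inequality in item~\eqref{item:centralizerinprojectiveintervals}: one must correctly exploit that the commutator in a Mal'cev algebra distributes over arbitrary joins together with the modular identities $\alpha=\beta\wedge\gamma$, $\delta=\beta\vee\gamma$, rather than assuming a naive distributive law. Everything else is bookkeeping—chaining the elementary case along the projectivity sequence and reading the two transposition types symmetrically.
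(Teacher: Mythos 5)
Your argument is correct and rests on exactly the same foundation as the paper's proof, namely Proposition~\ref{prop:commutator_lattice}, which makes $\Con\ab{A}$ a commutator lattice with $(\cdot:\cdot)$ as its residuation; the paper then simply delegates the rest to the cited \cite[Lemma~3.4]{Aic18}, whereas you carry out the underlying computation (reduction to a single transposition, join-distributivity and monotonicity of the commutator, and $[\eta,\gamma]\leq\eta\wedge\gamma$) yourself. The details check out: for $(\alpha:\beta)\leq(\gamma:\delta)$ one gets $[\eta,\delta]=[\eta,\beta]\vee[\eta,\gamma]\leq\alpha\vee\gamma=\gamma$, the reverse inequality needs only monotonicity and $[\eta,\beta]\leq\beta$, and item~\eqref{item:typesinprojactiveintervals} follows as you describe, so your write-up is a valid self-contained substitute for the citation.
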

\begin{proof}
Proposition~\ref{prop:commutator_lattice} implies that the assumptions 
of \cite[Lemma~3.4]{Aic18} are satisfied, and the statement follows. 
\end{proof}
The following
three lemmata can be viewed as partial generalization 
of the fact that in a group $\ab{G}$ given two normal 
subgroups $A$ and $B$, we have $A/(A\cap B)\cong (A+B)/B$. 
\begin{lemma}\label{lemma:conseguenze_permutabilita_intervalli_proiettivi_esistenza,d(b,o,c)}
Let $\ab{A}$ be a Mal'cev algebra, let $\alpha,\beta,\gamma,\delta\in \Con\ab{A}$
with $\alpha\leq\beta$, $\gamma\leq\delta$ and 
$\interval{\alpha}{\beta}\nearrow\interval{\gamma}{\delta}$. 
Then for all $o\in A$ and for each $x\in o/\delta$ there exist 
$b\in o/\beta$ and $c\in o/\gamma$
such that $x\mathrel{\alpha} d(b,o,c)$. 
\end{lemma}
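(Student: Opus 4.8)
The plan is to reduce everything to congruence permutability, which holds for any algebra with a Mal'cev polynomial $d$: from the identities $d(x,x,y)=y$ and $d(x,y,y)=x$ together with the fact that the polynomial $d$ preserves congruences, one obtains $\theta_1\circ\theta_2=\theta_2\circ\theta_1$ for all $\theta_1,\theta_2\in\Con\ab{A}$. Since $\interval{\alpha}{\beta}\nearrow\interval{\gamma}{\delta}$ gives $\delta=\beta\vee\gamma$, permutability yields $\delta=\gamma\circ\beta$. Thus, starting from an arbitrary $x\in o/\delta$, I would first use this factorization to produce an intermediate element $w$ with $x\mathrel{\gamma}w\mathrel{\beta}o$. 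This $w$ already lies in $o/\beta$, so it is the natural choice for $b$.

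The second step is to manufacture the companion element $c\in o/\gamma$ so that $d(b,o,c)$ recovers $x$ modulo $\alpha$. The guiding heuristic is the abelian picture, in which $d(u,v,w)$ behaves like $u-v+w$: there the requirement $d(w,o,c)=x$ forces $c=x-w+o$, that is, $c=d(x,w,o)$. I would therefore set $c:=d(x,w,o)$. That $c\in o/\gamma$ is immediate: since $x\mathrel{\gamma}w$ and $d$ preserves $\gamma$, we get $c=d(x,w,o)\mathrel{\gamma}d(w,w,o)=o$.

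The final step verifies $d(b,o,c)=d(w,o,d(x,w,o))\mathrel{\alpha}x$. Because $\interval{\alpha}{\beta}\nearrow\interval{\gamma}{\delta}$ also gives $\alpha=\beta\wedge\gamma$, it suffices to check the two congruences separately. For $\gamma$, I would use $w\mathrel{\gamma}x$ and $c\mathrel{\gamma}o$ to compute $d(w,o,c)\mathrel{\gamma}d(x,o,o)=x$. For $\beta$, I would first note that $w\mathrel{\beta}o$ forces $c=d(x,w,o)\mathrel{\beta}d(x,o,o)=x$, whence $d(w,o,c)\mathrel{\beta}d(o,o,x)=x$. Each of these is a one-line application of congruence preservation of $d$ and the two Mal'cev identities.

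I do not anticipate a serious obstacle; the only genuinely creative move is the guess $c=d(x,w,o)$, after which both verifications are forced. It is worth flagging that this argument uses neither abelianness of the interval nor modularity of $\Con\ab{A}$ beyond the defining equalities $\delta=\beta\vee\gamma$ and $\alpha=\beta\wedge\gamma$ of an upward transposition; the work is carried entirely by permutability and the Mal'cev identities.
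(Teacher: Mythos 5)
Your proof is correct and follows essentially the same route as the paper's: split $x\mathrel{\delta}o$ via permutability, take one factor as the ``raw'' intermediate element and manufacture the other as a Mal'cev difference, then verify the congruence modulo $\beta$ and modulo $\gamma$ separately and use $\alpha=\beta\wedge\gamma$. The only difference is a mirror image of the roles: the paper chooses $y$ with $o\mathrel{\gamma}y\mathrel{\beta}x$ and sets $c:=y$, $b:=d(x,y,o)$, whereas you choose $w$ with $x\mathrel{\gamma}w\mathrel{\beta}o$ and set $b:=w$, $c:=d(x,w,o)$.
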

\begin{proof}
Let $o\in A$ and let $x\in o/\delta$.
Since $\delta=\beta\vee \gamma$ and $\ab{A}$ is congruence permutable,
there exists $y\in A$ such that
$o\mathrel{\gamma}y\mathrel{\beta}x$. 
Thus, the fact that
$\beta\wedge\gamma=\alpha$
implies that $((x-_oy)+_oy)\mathrel{\alpha}x$.
Thus, since $x-_o y\mathrel{\beta}d(x,x,o)=o$ and $y\mathrel{\gamma}o$,
we have that $b:=d(x,y,o)\in o/\beta$, $c:=y\in o/\gamma$ and 
$x\mathrel{\alpha}d(b,o,c)$. 
\end{proof}
\begin{lemma}\label{lemma:ismorphisms_between_classes_same_element_projective_intervals}
Let $\ab{A}$ be a Mal'cev algebra,
let $\alpha,\beta,\gamma,\delta\in \Con\ab{A}$
with $\alpha\leq\beta$, $\gamma\leq\delta$ and 
$\interval{\alpha}{\beta}\nearrow\interval{\gamma}{\delta}$,
let $o\in A$ and let  
\[
h_o:=\{(x/\gamma,b/\alpha)\in (o/\gamma)/(\delta/\gamma)\times
(o/\alpha)/(\beta/\alpha)\mid \exists c\in o/\gamma\colon x\mathrel{\alpha}d(b,o,c)\}.
\] 
Then $h_o$ is a bijection between 
$(o/\gamma)/(\delta/\gamma)$ and 
$(o/\alpha)/(\beta/\alpha)$. 
\end{lemma}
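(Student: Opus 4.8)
The plan is to treat $h_o$ as a relation from $(o/\gamma)/(\delta/\gamma)$ to $(o/\alpha)/(\beta/\alpha)$ and to verify in turn that it is total, single-valued, injective, and surjective; the bijection then follows. Throughout I will use only the Mal'cev identities $d(a,b,b)=a$ and $d(b,b,a)=a$, the fact that $d$ preserves each congruence, and the two equalities coming from $\interval{\alpha}{\beta}\nearrow\interval{\gamma}{\delta}$, namely $\alpha=\beta\wedge\gamma$ and $\delta=\beta\vee\gamma$ (so in particular $\alpha\leq\gamma$ and $\beta,\gamma\leq\delta$). The key repeated observation will be that if $c\mathrel{\gamma}o$, then $d(b,o,c)\mathrel{\gamma}d(b,o,o)=b$; combined with $\alpha\leq\gamma$ this shows that whenever $(x/\gamma,b/\alpha)\in h_o$ with witness $c\in o/\gamma$, one has $x\mathrel{\gamma}d(b,o,c)\mathrel{\gamma}b$, so that $x$ and $b$ lie in the same $\gamma$-class. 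Totality is then exactly Lemma~\ref{lemma:conseguenze_permutabilita_intervalli_proiettivi_esistenza,d(b,o,c)}, which for each $x\in o/\delta$ produces $b\in o/\beta$ and $c\in o/\gamma$ with $x\mathrel{\alpha}d(b,o,c)$, and $b/\alpha$ indeed lies in $(o/\alpha)/(\beta/\alpha)$.

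For single-valuedness I would take two pairs $(x/\gamma,b/\alpha),(x'/\gamma,b'/\alpha)\in h_o$ with the same first coordinate, represented by $x\mathrel{\gamma}x'$ and witnessed by $c$ and $c'$. Since $b,b'\in o/\beta$ we have $b\mathrel{\beta}b'$, while the workhorse observation gives $b\mathrel{\gamma}x\mathrel{\gamma}x'\mathrel{\gamma}b'$, hence $b\mathrel{\gamma}b'$; as $\alpha=\beta\wedge\gamma$, intersecting the two relations yields $b\mathrel{\alpha}b'$, i.e.\ $b/\alpha=b'/\alpha$. Injectivity is the symmetric computation: if $(x/\gamma,b/\alpha),(x'/\gamma,b'/\alpha)\in h_o$ with $b\mathrel{\alpha}b'$, then $x\mathrel{\gamma}b\mathrel{\gamma}b'\mathrel{\gamma}x'$ (the middle relation from $\alpha\leq\gamma$), so $x/\gamma=x'/\gamma$. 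Surjectivity is immediate: given $b\in o/\beta$, the choice $c=o$ gives $d(b,o,o)=b$, so $(b/\gamma,b/\alpha)\in h_o$, and $b\in o/\beta\subseteq o/\delta$ ensures $b/\gamma\in(o/\gamma)/(\delta/\gamma)$.

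The only step that requires genuine care is single-valuedness, where the crux is to descend to the meet $\alpha=\beta\wedge\gamma$: one must establish the $\beta$-relation and the $\gamma$-relation between $b$ and $b'$ separately and only then intersect them. Everything else reduces to short applications of the Mal'cev identities, and, notably, no abelianness of the interval is used. Conceptually, $h_o$ is the explicit incarnation of the second isomorphism theorem for the perspective intervals $\interval{\alpha}{\beta}$ and $\interval{\gamma}{\delta}$: it sends the $\delta$-class of $x$ to its $\beta$-part $b$ characterised by $b\mathrel{\gamma}x$ and $b\in o/\beta$, while the inverse simply reads a $\beta$-class $b/\alpha$ as the $\delta$-class $b/\gamma$.
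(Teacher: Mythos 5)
Your proof is correct and follows essentially the same route as the paper's: totality via Lemma~\ref{lemma:conseguenze_permutabilita_intervalli_proiettivi_esistenza,d(b,o,c)}, single-valuedness by establishing $b\mathrel{\gamma}b'$ (through $d(b,o,c)\mathrel{\gamma}d(b,o,o)=b$) and $b\mathrel{\beta}b'$ separately and intersecting at $\alpha=\beta\wedge\gamma$, injectivity by chaining $\gamma$-relations, and surjectivity via $d(b,o,o)=b$. The only cosmetic difference is that you factor the computations through the observation $x\mathrel{\gamma}b$ and allow distinct representatives $x\mathrel{\gamma}x'$, which slightly streamlines the well-definedness and injectivity steps.
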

\begin{proof}
First, we show that $h_o$ is functional. 
To this end, let $x\in o/\delta$ and let $b_1, b_2\in o/\beta$ 
and $c_1, c_2\in o/\gamma$ be such that
$d(b_1,o,c_1)\mathrel{\alpha}x\mathrel{\alpha}d(b_2, o, c_2)$.
Note that the existence of at least one such pair $b\in o/\beta$ 
and $c\in o/\gamma$ 
is a consequence 
of~Lemma~\ref{lemma:conseguenze_permutabilita_intervalli_proiettivi_esistenza,d(b,o,c)}. 
Then we have that 
\[
b_1=d(b_1,o,o)\mathrel{\gamma}d(b_1, o,c_1)
\mathrel{\alpha}d(b_2,o,c_2)\mathrel{\gamma}d(b_2, o,o)=b_2.
\] 
Since $\gamma\geq \alpha$, we have $b_1\mathrel{\gamma}b_2$. 
Since $b_1\mathrel{\beta}b_2$ and $\gamma\wedge \beta=\alpha$,
we have that $b_1\mathrel{\alpha}b_2$. 
Thus, for each $x/\gamma\in (o/\gamma)/(\delta/\gamma)$ there 
exists a unique $b/\alpha\in (o/\alpha)/(\beta/\alpha)$ such that
$(x/\gamma, b/\alpha)\in h_o$. 

Next, we show that $h_o$ is injective. 
To this end, let $x/\gamma, y/\gamma\in (o/\gamma)/(\delta/\gamma)$ 
with $h_o(x/\gamma)=h_o(y/\gamma)$ and let $b\in A$ be such that
$h_o(x/\gamma)=b/\alpha$. 
Lemma~\ref{lemma:conseguenze_permutabilita_intervalli_proiettivi_esistenza,d(b,o,c)}
yields that there exist $c_x,c_y\in o/\gamma$ such that
$x\mathrel{\alpha}d(b,o,c_x)\mathrel{\gamma} d(b, o,c_y)\mathrel{\alpha} y$. 
Thus, $(x,y)\in \gamma\vee \alpha=\gamma$. 

Finally, $h_o$ is surjective since for each $b\in o/\beta$ we have that
$b=d(b,o,o)$ and $b\in o/\delta$, hence $h_o(b/\gamma)=b/\alpha$. 
\end{proof}
\begin{lemma}\label{lemma:isomorphisms_polynomial_restrictions_projective_intervals}
Let $\ab{A}$ be a Mal'cev algebra, let $n\in\N$,
let $o\in A$, let $\alpha,\beta,\gamma,\delta\in \Con\ab{A}$
with $\alpha\leq\beta$, $\gamma\leq\delta$ and 
$\interval{\alpha}{\beta}\nearrow\interval{\gamma}{\delta}$.
For each $p\in\POL\ari{n}\ab{A}$ with $p(o,\dots, o)=o$ we define 
\[
\begin{split}
f\colon ((o/\alpha)/(\beta/\alpha))^n\to (o/\alpha)/(\beta/\alpha) &\text{ by } 
(x_1/\alpha, \dots, x_n/\alpha)\mapsto p(\vec{x})/\alpha\\
g\colon ((o/\gamma)/(\delta/\gamma))^n\to (o/\gamma)/(\delta/\gamma) &\text{ by } 
(x_1/\gamma, \dots, x_n/\gamma)\mapsto p(\vec{x})/\gamma.
\end{split}
\]
Then the function $h_o$ from
Lemma~\ref{lemma:ismorphisms_between_classes_same_element_projective_intervals}
is an isomorphism between the algebras $((o/\alpha)/(\beta/\alpha); f)$ 
and $((o/\gamma)/(\delta/\gamma); g)$. 
\end{lemma}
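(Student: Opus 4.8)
The plan is to leverage the fact that, by Lemma~\ref{lemma:ismorphisms_between_classes_same_element_projective_intervals}, the map $h_o$ is already known to be a bijection from $(o/\gamma)/(\delta/\gamma)$ onto $(o/\alpha)/(\beta/\alpha)$; hence it suffices to verify that $h_o$ intertwines the operations, i.e.\ that $h_o(g(x_1/\gamma,\dots,x_n/\gamma))=f(h_o(x_1/\gamma),\dots,h_o(x_n/\gamma))$ for all $x_1,\dots,x_n\in o/\delta$. First I would fix such representatives and, for each $i$, write $h_o(x_i/\gamma)=b_i/\alpha$, which by the definition of $h_o$ means there are $b_i\in o/\beta$ and $c_i\in o/\gamma$ with $x_i\mathrel{\alpha}d(b_i,o,c_i)$. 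With $\vec{b}=(b_1,\dots,b_n)$ and $\vec{c}=(c_1,\dots,c_n)$, the goal becomes the single statement $h_o(p(\vec{x})/\gamma)=p(\vec{b})/\alpha$.

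The core of the argument is a congruence computation. Since $p$ preserves $\alpha$ and $x_i\mathrel{\alpha}d(b_i,o,c_i)$ for every $i$, I obtain $p(\vec{x})\mathrel{\alpha}p(d(b_1,o,c_1),\dots,d(b_n,o,c_n))$. Next I would invoke Lemma~\ref{lemma:ex_citation_to_prop2.6Aic06} with the triple $\vec{u}=\vec{b}$, $\vec{v}=(o,\dots,o)$, $\vec{w}=\vec{c}$: the hypotheses hold because $b_i\in o/\beta$ gives $\vec{b}\equiv_\beta(o,\dots,o)$ and $c_i\in o/\gamma$ gives $(o,\dots,o)\equiv_\gamma\vec{c}$, with the roles of the two congruences in that lemma played here by $\beta$ and $\gamma$. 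Using $p(o,\dots,o)=o$, this yields
\[
d(p(\vec{b}),o,p(\vec{c}))\equiv_{[\beta,\gamma]}p(d(b_1,o,c_1),\dots,d(b_n,o,c_n)).
\]
The decisive point is that the transposition $\interval{\alpha}{\beta}\nearrow\interval{\gamma}{\delta}$ gives $\beta\wedge\gamma=\alpha$, and since the commutator is always below the meet we have $[\beta,\gamma]\leq\beta\wedge\gamma=\alpha$. Combining the two displayed relations modulo $\alpha$ collapses everything to $p(\vec{x})\mathrel{\alpha}d(p(\vec{b}),o,p(\vec{c}))$.

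To finish, I would observe that $p(\vec{b})\in o/\beta$ and $p(\vec{c})\in o/\gamma$, because $p$ preserves both $\beta$ and $\gamma$ and fixes $o$. Thus the relation $p(\vec{x})\mathrel{\alpha}d(p(\vec{b}),o,p(\vec{c}))$ exhibits exactly the witness required by the definition of $h_o$, so $h_o(p(\vec{x})/\gamma)=p(\vec{b})/\alpha$, which is precisely the desired homomorphism identity; together with bijectivity this makes $h_o$ an isomorphism. The main obstacle is purely bookkeeping: one must align the two congruences of Lemma~\ref{lemma:ex_citation_to_prop2.6Aic06} with $\beta$ and $\gamma$ in the correct order and recognize that $p(o,\dots,o)=o$ is exactly what forces the middle Mal'cev entry down to $o$, while the inequality $[\beta,\gamma]\leq\alpha$ supplied by the upward transposition is what lets the commutator term be absorbed into $\alpha$.
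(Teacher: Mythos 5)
Your proposal is correct and follows essentially the same route as the paper: both fix witnesses $x_i\mathrel{\alpha}d(b_i,o,c_i)$, apply Lemma~\ref{lemma:ex_citation_to_prop2.6Aic06} to the triple $\vec{b}\equiv_\beta(o,\dots,o)\equiv_\gamma\vec{c}$, and absorb the commutator via $[\beta,\gamma]\leq\beta\wedge\gamma=\alpha$. The only cosmetic difference is that you read off $h_o(p(\vec{x})/\gamma)=p(\vec{b})/\alpha$ directly from the witness $d(p(\vec{b}),o,p(\vec{c}))$ in the definition of $h_o$, whereas the paper first collapses $p(\vec{c})$ to $o$ modulo $\gamma$ and then evaluates $h_o$ at $p(\vec{b})/\gamma$.
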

\begin{proof}
Let $x_1/\gamma, \dots, x_n/\gamma\in (o/\gamma)/(\delta/\gamma)$. We show that 
$h_o(g(x_1/\gamma, \dots, x_n/\gamma))=f(h_o(x_1/\gamma), \dots, h_o(x_n/\gamma))$.
By Lemma~\ref{lemma:conseguenze_permutabilita_intervalli_proiettivi_esistenza,d(b,o,c)}
for each $i\in\finset{n}$ there exists $b_i\in o/\beta$ and 
$c_i\in o/\gamma$ such that $x_i\mathrel{\alpha} d(b_i, o,c_i)$,
and by definition $h_o(x_i/\gamma)=b_i/\alpha$. 
Since $[\beta, \gamma]\leq\alpha\leq\gamma$ Lemma~\ref{lemma:ex_citation_to_prop2.6Aic06} yields
\[
\begin{split}
&h_o(g(x_1/\gamma, \dots, x_n/\gamma))=h_o(p(\vec{x})/\gamma)=\\
&h_o(p(d(b_1,o,c_1), \dots, d(b_n, o,c_n))/\gamma)=\\
&h_o(d(p(b_1, \dots, b_n),p(o, \dots, o), p(c_1, \dots, c_n))/\gamma)=\\
&h_o(d(p(b_1, \dots, b_n),p(o,\dots, o),p(o, \dots, o))/\gamma)=\\
&h_o(p(b_1, \dots, b_n)/\gamma)=p(b_1, \dots, b_n)/\alpha=\\
&f(b_1/\alpha, \dots, b_n/\alpha)=f(h_o(x_1/\gamma), \dots, h_o(x_n/\gamma)).
\end{split}
\]
\end{proof}

\section{Abelian intervals that are simple complemented modular lattices}\label{sec:coord_simple-comple_mod}
In this section we investigate the properties of 
the algebra $\ab{A}\restrict{o/\mu}$ for $o\in A$
and $\mu\in \Con\ab{A}$ when 
$\ab{A}$ is a finite Mal'cev algebra and $\mu$ is an abelian
congruence of $\ab{A}$ such that
$\interval{\bottom{A}}{\mu}$ is a simple 
complemented modular lattice. 
For the theory of simple complemented modular lattices 
(or projective geometries) we refer the reader to
\cite[Section~4.8]{McKMcnTay88}.
First, we report a well established fact in lattice theory:
\begin{lemma}\label{lemma:Lemma_4.1_revised}
Let $\ab{L}$ be a complemented modular lattice with largest element $1$
and smallest element $0$, 
let $l\in \N$, and let $\alpha_1, \dots, \alpha_l$  be a 
set of atoms minimal with the property that 
$\alpha_1\vee\dots\vee\alpha_l=1$. 
Then the height of $\ab{L}$ is $l$ and for each $i\leq l$, 
setting $\beta_i:=\bigvee_{\substack{j=1\\j\neq i}}^{l}\alpha_j$,
we have that
$\interval{0}{\alpha_i}\nearrow \interval{\beta_i}{1}$.
\end{lemma}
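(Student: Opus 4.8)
The plan is to prove this lemma entirely inside lattice theory, using the standard facts about complemented modular lattices (projective geometries) collected in~\cite[Section~4.8]{McKMcnTay88}. The statement has two parts: that the height of $\ab{L}$ equals $l$, and that for each $i$ we have the transposition $\interval{0}{\alpha_i}\nearrow\interval{\beta_i}{1}$. I would treat the height claim first, since the second part will follow from it together with minimality.

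For the height, the key observation is that the chain
\[
0\leq \alpha_1\leq \alpha_1\vee\alpha_2\leq\dots\leq \alpha_1\vee\dots\vee\alpha_l=1
\]
has length $l$, so it suffices to show that each step is a covering, i.e.\ that $\gamma_{k-1}:=\alpha_1\vee\dots\vee\alpha_{k-1}$ is covered by $\gamma_k:=\gamma_{k-1}\vee\alpha_k$. Here is where minimality of the set $\{\alpha_1,\dots,\alpha_l\}$ enters: if $\alpha_k\leq \gamma_{k-1}$, then $\alpha_k$ could be dropped and the remaining atoms would still join to $1$, contradicting minimality. So $\alpha_k\not\leq\gamma_{k-1}$, and since $\alpha_k$ is an atom, $\alpha_k\wedge\gamma_{k-1}=0$. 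Modularity then gives that $\interval{0}{\alpha_k}\nearrow\interval{\gamma_{k-1}}{\gamma_k}$ is a transposition of a prime (atom) interval, so $\gamma_{k-1}\prec\gamma_k$. Thus the chain is maximal of length $l$. To conclude the height is exactly $l$ (and not merely at least $l$), I would invoke that in a complemented modular lattice of finite height all maximal chains have the same length (the Jordan--Hölder/dimension property for modular lattices, \cite[Section~4.8]{McKMcnTay88}); since $\{\alpha_1,\dots,\alpha_l\}$ generating $1$ forces finite height, the common length is $l$.

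For the transposition $\interval{0}{\alpha_i}\nearrow\interval{\beta_i}{1}$, I need to verify the two defining equalities $\alpha_i\vee\beta_i=1$ and $\alpha_i\wedge\beta_i=0$. The join is immediate: $\alpha_i\vee\beta_i=\alpha_1\vee\dots\vee\alpha_l=1$ by hypothesis. The meet is the crux. By minimality, $\alpha_i\not\leq\beta_i$ (otherwise $\alpha_i$ is redundant and $\beta_i=1$, contradicting that we need all $l$ atoms). Since $\alpha_i$ is an atom and $\alpha_i\wedge\beta_i\leq\alpha_i$, we have $\alpha_i\wedge\beta_i\in\{0,\alpha_i\}$, and $\alpha_i\wedge\beta_i=\alpha_i$ would mean $\alpha_i\leq\beta_i$, which we just excluded. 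Hence $\alpha_i\wedge\beta_i=0$, giving the claimed transposition.

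The step I expect to require the most care is pinning down exactly which general lattice-theoretic facts to cite versus reprove. The minimality argument itself is short, but the clean deduction that the height equals $l$ relies on the dimension theory of modular lattices (equicardinality of maximal chains), and I would make sure the cited reference covers the finite-height complemented modular case; the argument that each $\gamma_{k-1}\prec\gamma_k$ via an atom transposition is elementary modularity and can be stated directly. Everything else is a direct unwinding of the definitions of $\nearrow$ together with the atom property, so no genuinely hard estimate or construction is involved.
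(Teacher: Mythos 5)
Your proof is correct. The paper itself offers no proof of this lemma --- it is stated as ``a well established fact in lattice theory'' --- so there is no argument to compare against; your write-up is the standard one. Both halves check out: minimality gives $\alpha_i\nleq\beta_i$, hence $\alpha_i\wedge\beta_i=0$ because $\alpha_i$ is an atom, which together with $\alpha_i\vee\beta_i=1$ is exactly the definition of $\interval{0}{\alpha_i}\nearrow\interval{\beta_i}{1}$; and the chain $0\prec\alpha_1\prec\alpha_1\vee\alpha_2\prec\dots\prec 1$ is maximal of length $l$ by the same atom-transposition argument, so the Jordan--H\"older chain condition for modular lattices of finite height pins the height at exactly $l$ (note that complementedness is never actually needed --- modularity suffices).
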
 
The next lemma follows from Lemma~\ref{lemma:Lemma_4.1_revised} 
setting $\ab{L}$ to be the interval $\interval{\bottom{A}}{\mu}$.
\begin{lemma}\label{lemma:set_of_generators_of_congruence_classes_abelian_congruence}
Let $\ab{A}$ be a finite Mal'cev algebra, let $\mu$ be an abelian
element of $\Con{\ab{A}}$ such that $\interval{\bottom{A}}{\mu}$
is a
simple complemented modular lattice, let $o\in A$,
let $l\in\N$, and let 
$\alpha_1, \dots, \alpha_l$ be atoms
of $\interval{\bottom{A}}{\mu}$ such that
$\bigvee_{i=1}^l\alpha_i=\mu$ and for all
$i\in \finset{l}$ setting $\beta_i:=\bigvee_{\substack{j=1\\j\neq i}}^{l}\alpha_j$ we 
have that $\beta_i<\mu$. Then for each $x\in o/\mu$ there exist
$\{b_1, \dots, b_l\}\subseteq o/\mu$ such that
for each $i\in\finset{l}$ we have $b_i\in o/\alpha_i$ and
$x=b_1+_o\dots +_o b_l$. 
\end{lemma}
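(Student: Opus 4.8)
The plan is to combine the decomposition guaranteed by Lemma~\ref{lemma:Lemma_4.1_revised} with the abelian group structure provided by Lemma~\ref{lemma:piu_emeno_fanno_gruppoo_abeliano_nella_classe_congruenza}, and to use the projective transpositions $\interval{\bottom{A}}{\alpha_i}\nearrow\interval{\beta_i}{\mu}$ to move an arbitrary $x\in o/\mu$ into each atom-class in turn. First I would invoke Lemma~\ref{lemma:Lemma_4.1_revised} applied to $\ab{L}=\interval{\bottom{A}}{\mu}$: since by hypothesis $\beta_i<\mu$ for every $i$, the atoms $\alpha_1,\dots,\alpha_l$ form a minimal set with join $\mu$, so the lemma yields $\interval{\bottom{A}}{\alpha_i}\nearrow\interval{\beta_i}{\mu}$ for each $i\in\finset{l}$. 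Because $\mu$ is abelian, $[\mu,\mu]=\bottom{A}$, and hence Lemma~\ref{lemma:piu_emeno_fanno_gruppoo_abeliano_nella_classe_congruenza} tells us that $(o/\mu;+_o,-_o,o)$ is an abelian group, which is the arena in which the sum $b_1+_o\dots+_o b_l$ lives.

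Next, for a fixed $x\in o/\mu$ I would define the components $b_i$ by repeatedly projecting $x$ onto the atom-classes. Concretely, consider the transposition $\interval{\bottom{A}}{\alpha_i}\nearrow\interval{\beta_i}{\mu}$: here $\beta_i\wedge\alpha_i=\bottom{A}$ and $\beta_i\vee\alpha_i=\mu$, so Lemma~\ref{lemma:ismorphisms_between_classes_same_element_projective_intervals} supplies a bijection $h_o$ between $(o/\beta_i)/(\mu/\beta_i)$ and $(o/\bottom{A})/(\alpha_i/\bottom{A})=(o/\alpha_i)$, and Lemma~\ref{lemma:conseguenze_permutabilita_intervalli_proiettivi_esistenza,d(b,o,c)} guarantees, for the class $x/\beta_i$, the existence of a witness $b_i\in o/\alpha_i$ and $c_i\in o/\beta_i$ with $x\mathrel{\bottom{A}}d(b_i,o,c_i)$, i.e.\ $x=d(b_i,o,c_i)=b_i+_o c_i$. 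This is exactly the statement that $b_i$ is the $\alpha_i$-component of $x$ and $c_i\in o/\beta_i$ is the remainder. Doing this for each $i$ produces the required family $\{b_1,\dots,b_l\}\subseteq o/\mu$ with $b_i\in o/\alpha_i$.

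It remains to verify the equation $x=b_1+_o\dots+_o b_l$, and this is the step I expect to be the main obstacle, since the $b_i$ are produced independently and one must show they reassemble correctly. The clean way to handle it is to work inside the $\ab{R}_o$-module $(o/\mu;+_o)$ from Lemma~\ref{lemma:piu_meno_fanno_modulo_su-anello_polinomi_ristretto_nella_classe_congruenza}: in this module each $\alpha_i$ corresponds to a submodule $\alpha_i/o$ (the set of elements $\bottom{A}$-related, equivalently $\alpha_i$-related, to $o$), and the join decomposition $\mu=\bigvee_i\alpha_i$ with $\alpha_i\wedge\beta_i=\bottom{A}$ becomes an internal direct-sum decomposition $o/\mu=\bigoplus_{i=1}^l (o/\alpha_i)$ of the module. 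Under this identification, the element $b_i$ extracted above is precisely the $i$-th coordinate of $x$: indeed $x-_o b_i=c_i\in o/\beta_i$ means the $\alpha_i$-component of $x-_o b_i$ is $o$, so $b_i$ carries the full $\alpha_i$-part of $x$. Summing over the direct-sum coordinates then gives $x=b_1+_o\dots+_o b_l$. The technical care needed is only to translate the lattice-theoretic modularity and the complementation $\beta_i\wedge\alpha_i=\bottom{A}$, $\beta_i\vee\alpha_i=\mu$ into the module language and to confirm that the $+_o$-sum of the coordinates recovers $x$; both follow from the distributivity of module addition over the direct summands together with congruence permutability, so no genuinely new ingredient beyond the cited lemmata is required.
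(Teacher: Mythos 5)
Your construction of the candidate components $b_i$ is sound as far as it goes: by Lemma~\ref{lemma:Lemma_4.1_revised} the transpositions $\interval{\bottom{A}}{\alpha_i}\nearrow\interval{\beta_i}{\mu}$ exist, and Lemma~\ref{lemma:conseguenze_permutabilita_intervalli_proiettivi_esistenza,d(b,o,c)} produces $b_i\in o/\alpha_i$ and $c_i\in o/\beta_i$ with $x=b_i+_oc_i$. The gap sits exactly where you predicted it would. Your verification that $x=b_1+_o\dots+_ob_l$ rests on the assertion that ``the join decomposition $\mu=\bigvee_i\alpha_i$ \dots becomes an internal direct-sum decomposition $o/\mu=\bigoplus_{i=1}^l(o/\alpha_i)$ of the module''. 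But the claim that the internal sum of the submodules $o/\alpha_i$ exhausts $o/\mu$ \emph{is} the statement of the lemma, merely rewritten in module language; appealing to ``distributivity of module addition over the direct summands'' presupposes that decomposition, so the argument is circular as written. None of the cited lemmata converts a lattice join of $l$ congruences into a module sum of $l$ submodules in one step; permutability gives this only for a join of \emph{two} congruences at a time, and assembling those two-at-a-time steps into the full decomposition is precisely the work that still has to be done.

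There are two ways to close the gap. The paper's proof runs the obvious induction on $k$: with $\gamma=\bigvee_{i=1}^{k}\alpha_i$ and $\delta=\bigvee_{i=1}^{k-1}\alpha_i$, Lemma~\ref{lemma:Lemma_4.1_revised} gives $\interval{\bottom{A}}{\alpha_k}\nearrow\interval{\delta}{\gamma}$, Lemma~\ref{lemma:conseguenze_permutabilita_intervalli_proiettivi_esistenza,d(b,o,c)} peels off $b_k\in o/\alpha_k$ with remainder $e\in o/\delta$, and the induction hypothesis decomposes $e$; the decomposition is thus built constructively, one atom at a time, rather than verified after the fact. Alternatively, your one-shot construction can be salvaged without induction: set $s:=b_1+_o\dots+_ob_l$ and note that for each $i$ the element $x-_os$ equals $c_i$ minus (with respect to $-_o$) the $+_o$-sum of the $b_j$ with $j\neq i$, all of which lie in $o/\beta_i$; since $o/\beta_i$ is closed under $+_o$ and $-_o$, this gives $x-_os\in o/\beta_i$ for every $i$, and since modularity together with the minimality of $\{\alpha_1,\dots,\alpha_l\}$ forces $\bigwedge_{i=1}^{l}\beta_i=\bottom{A}$, we conclude $x-_os=o$, i.e.\ $x=s$. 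Either repair is short, but as submitted the key step is assumed rather than proved.
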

\begin{proof}
We prove  the following statement by induction on $k$:
\begin{claim}\label{claim:induction_claim_lemma:set_of_generators_of_congruence_classes_abelian_congruence}
Let $k\in\finset{l}$,
and let $\gamma:=\bigvee_{i=1}^{k}\alpha_i$.
Then for all $x\in o/\gamma$ there exist $b_1, \dots, b_k\in A$
such that $x=b_1 +_o\dots +_o b_k$ and for all $j\in\finset{k}$
we have $b_j\in o/\alpha_j$. 
\end{claim}
The base step is trivial: If $k=1$, then 
$\gamma=\alpha_k$, and the claim
follows setting $b_1=x$.  
For the induction step, we assume 
Claim~\ref{claim:induction_claim_lemma:set_of_generators_of_congruence_classes_abelian_congruence}
to be true for $k-1$ and prove it for $k$.
Let $\delta=\bigvee_{i=1}^{k-1} \alpha_{i}$.
Then
Lemma~\ref{lemma:Lemma_4.1_revised}
yields
$\interval{\bottom{A}}{\alpha_{k}}\nearrow
\interval{\delta}{\gamma}$.
Thus, 
Lemma~\ref{lemma:conseguenze_permutabilita_intervalli_proiettivi_esistenza,d(b,o,c)}
yields that there exist $b_k\in o/\alpha_{k}$ and 
$e\in o/\delta$ such that
$x=e+_o b_k$. Moreover, by the induction hypothesis, 
there exist $b_1\in o/\alpha_{1}, \dots, 
b_{k-1}\in o/\alpha_{k-1}$ such that
$e=b_1+_o\dots +_o b_{k-1}$. 
Thus, Lemma~\ref{lemma:piu_emeno_fanno_gruppoo_abeliano_nella_classe_congruenza}
implies that
\[x=e+_o b_k= b_1+_o\dots +_o b_{k-1} +_o b_k.\] 
This concludes the proof of Claim~\ref{claim:induction_claim_lemma:set_of_generators_of_congruence_classes_abelian_congruence}.

Then the statement of the lemma follows from 
Claim~\ref{claim:induction_claim_lemma:set_of_generators_of_congruence_classes_abelian_congruence} 
by setting $k=l$. 
\end{proof}
The following is a partial generalization of \cite[Proposition~8.1]{AicIdz04}.
\begin{proposition}\label{prop:identification_with_matrices}
Let $\ab{A}$ be a finite Mal'cev algebra, let $\mu$ be an abelian
element of $\Con{\ab{A}}$ such that $\interval{\bottom{A}}{\mu}$ 
is a
simple complemented modular lattice of height $m$, let $o\in A$
with $\card{o/\mu}>1$,
let 
\[R_o:=\{p\restrict{o/\mu}\mid p\in \POL\ari{1}\ab{A}\text{ and } p(o)=o\}\] 
and let $\ab{R}_o$ be defined as in
\eqref{eq:equazione_che_definisce_le_operazioni_dell'anello_dei_polinomi_ristretti}
(with $\mu$ in place of $\alpha$). 
Then there exist $n\in\N$,
a field $\ab{D}$, a ring isomorphism 
$\epsilon_{\ab{R}_o}\colon \ab{R}_o\to \ab{M}_n(\ab{D})$,
and
a group isomorphism $\epsilon_\mu^o\colon (o/\mu; +_o)\to\ab{D}^{(n\times m)}$ such
that for all $r\in R_o$ and $v\in o/\mu$ we have 
$\epsilon_\mu^o (r(v))=\epsilon_{\ab{R}_o}(r)\cdot \epsilon_\mu^o(v).$
\end{proposition}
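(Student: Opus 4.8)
The plan is to coordinatize the abelian congruence $\mu$ as a module and then invoke the classical structure theory of finite simple Artinian rings. By Lemma~\ref{lemma:piu_meno_fanno_modulo_su-anello_polinomi_ristretto_nella_classe_congruenza}, applied with $\mu$ in place of $\alpha$ (which is legitimate since $\mu$ abelian means $[\mu,\mu]=\bottom{A}$), the pair $(o/\mu;+_o)$ is a module $M$ over $\ab{R}_o$, and $\ab{A}\restrict{o/\mu}$ is polynomially equivalent to $M$; moreover, since $R_o$ is by construction a set of functions on $o/\mu$, the action $r\cdot x=r(x)$ is faithful. First I would identify the submodule lattice of $M$ with the given interval. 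Polynomially equivalent algebras share their congruence lattice, congruences of a module are exactly its submodules, and for the abelian congruence $\mu$ of the Mal'cev algebra $\ab{A}$ the restriction map $\theta\mapsto\theta\cap(o/\mu)^2$ is a lattice isomorphism $\interval{\bottom{A}}{\mu}\to\Con(\ab{A}\restrict{o/\mu})$. Hence $\Sub(M)\cong\interval{\bottom{A}}{\mu}$ is a simple complemented modular lattice of height $m$.

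Next I would read off the module structure from this lattice. Since $\ab{A}$ is finite, $M$ has finite length; a module whose submodule lattice is complemented is semisimple, so $M$ is a finite semisimple $\ab{R}_o$-module. The submodule lattice of a semisimple module is the direct product of the subspace lattices of its homogeneous components, and a direct product of two nontrivial lattices is never simple, so lattice simplicity forces a single homogeneous component: $M\cong S^{m}$ for a simple $\ab{R}_o$-module $S$, where $m$ is at once the length of $M$ and the height of the lattice. Set $\ab{D}:=\operatorname{End}_{\ab{R}_o}(S)$; by Schur's lemma $\ab{D}$ is a division ring, and since $\ab{A}$ is finite, Wedderburn's little theorem makes $\ab{D}$ a field. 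As $M$ is faithful with unique simple summand $S$ up to isomorphism, $S$ is itself a faithful simple $\ab{R}_o$-module, so $\ab{R}_o$ is a finite primitive ring; the Artin--Wedderburn theorem (equivalently Jacobson density together with finiteness) then yields a ring isomorphism $\epsilon_{\ab{R}_o}\colon\ab{R}_o\to\operatorname{End}_{\ab{D}}(S)\cong\ab{M}_n(\ab{D})$, where $n:=\dim_{\ab{D}}S$.

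Finally I would assemble the two isomorphisms compatibly. Fixing a $\ab{D}$-basis of $S$ that realizes $\epsilon_{\ab{R}_o}$ identifies $S$ with column vectors $\ab{D}^{n}$ on which $\ab{M}_n(\ab{D})$ acts by left multiplication; the isomorphism $M\cong S^{m}$ then becomes a group isomorphism $\epsilon_\mu^o\colon(o/\mu;+_o)\to\ab{D}^{(n\times m)}$ onto the group of $n\times m$ matrices, under which the $\ab{M}_n(\ab{D})$-action on the $m$ columns is exactly left matrix multiplication. By construction $\epsilon_\mu^o$ and $\epsilon_{\ab{R}_o}$ intertwine the two actions, giving $\epsilon_\mu^o(r(v))=\epsilon_{\ab{R}_o}(r)\cdot\epsilon_\mu^o(v)$ for all $r\in R_o$ and $v\in o/\mu$, as desired.

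I expect the main obstacle to be the first step: transferring the lattice hypotheses from the external interval $\interval{\bottom{A}}{\mu}$ of $\Con\ab{A}$ to the internal submodule lattice of $M$, and in particular justifying that the restriction map $\interval{\bottom{A}}{\mu}\to\Con(\ab{A}\restrict{o/\mu})$ is a lattice isomorphism in the Mal'cev setting, where injectivity follows because a congruence below $\mu$ relates only elements within a common $\mu$-block and surjectivity uses the uniform transport of the module structure across blocks. Once $\Sub(M)$ is known to be a simple complemented modular lattice of height $m$, the passage to the homogeneous decomposition $M\cong S^{m}$ and the application of Artin--Wedderburn are essentially mechanical.
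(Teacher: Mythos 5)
Your overall strategy --- coordinatize $(o/\mu;+_o)$ as a faithful $\ab{R}_o$-module $M$, read semisimplicity and homogeneity off its submodule lattice, and then apply Artin--Wedderburn --- is genuinely different from the paper's proof, which never considers $\Sub(M)$ as a whole. The paper instead fixes an atom $\alpha$ of $\interval{\bottom{A}}{\mu}$, proves directly that $(o/\alpha;+_o)$ is an irreducible faithful $\ab{R}_o$-module (irreducibility from \cite[Theorem~4.70(iii)]{McKMcnTay88} because $\alpha$ is generated by any pair $(a,o)$ with $a\neq o$; faithfulness via the decomposition $x=b_1+_o\dots+_ob_m$ of Lemma~\ref{lemma:set_of_generators_of_congruence_classes_abelian_congruence} together with the isomorphisms of Lemma~\ref{lemma:isomorphisms_polynomial_restrictions_projective_intervals}), applies Jacobson density to that single irreducible module, and only then assembles $(o/\mu;+_o)\cong(o/\alpha;+_o)^m$. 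Your route would be at least as clean if it went through, and your faithfulness argument for $M$ itself is simpler than the paper's.

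However, there is a genuine gap exactly where you place it: the claim that $\theta\mapsto\theta\cap(o/\mu)^2$ is a lattice isomorphism from $\interval{\bottom{A}}{\mu}$ onto $\Con(\ab{A}\restrict{o/\mu})\cong\Sub(M)$ carries the entire weight of the proof (it is what gives you complementedness, simplicity, and height $m$ of $\Sub(M)$), and neither of your justifications establishes it. For injectivity, the observation that a congruence below $\mu$ relates only elements within a common $\mu$-block shows that $\theta$ is determined by its restrictions to \emph{all} blocks, not by its restriction to the single block $o/\mu$; a priori two congruences below $\mu$ could agree on $o/\mu$ and differ on another block, since $\mu$-blocks of a general Mal'cev algebra need not be uniform. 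Injectivity can be rescued, but by a different argument: restriction is a lattice homomorphism (joins are preserved thanks to congruence permutability, because the interpolant $z$ with $x\mathrel{\theta_1}z\mathrel{\theta_2}y$ lies in $o/\mu$), so its kernel is a congruence of the simple lattice $\interval{\bottom{A}}{\mu}$, and $\card{o/\mu}>1$ rules out the total kernel. Surjectivity is the harder half: you must show that every $\ab{R}_o$-submodule $N$ of $M$ arises as the $o$-class of a congruence below $\mu$, i.e.\ that $\Cg{N\times\{o\}}$ restricted to $o/\mu$ is exactly the coset relation of $N$; this requires a Mal'cev-chain argument using abelianness of $\mu$ and the closure of $N$ under the polynomials $z\mapsto d(p(z),p(o),o)$, and ``uniform transport across blocks'' does not substitute for it. You also cannot import the correspondence from tame congruence theory, since at this stage $o/\mu$ is not yet known to be the range of an idempotent polynomial (that is Theorem~\ref{teor:extending_target_minimal-set_to_congruence}, which depends on the present proposition). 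The paper's atom-by-atom argument is designed precisely to avoid having to prove this correspondence.
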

\begin{proof}
From the fact that $\interval{\bottom{A}}{\mu}$ is 
simple we infer that all prime quotients are in the same projectivity class. 
Let $\alpha$ be an atom of $\interval{\bottom{A}}{\mu}$. 
Since $\mu$ is abelian, we have that 
$[\alpha, \alpha]\leq [\mu, \mu]=\bottom{A}$.
From
Lemma~\ref{lemma:piu_meno_fanno_modulo_su-anello_polinomi_ristretto_nella_classe_congruenza}
we infer that $(o/\alpha;+_o)$ is an $\ab{R}_o$-submodule of the $\ab{R}_o$-module
$(o/\mu; +_o)$.
Moreover, since $\interval{\bottom{A}}{\mu}$ is 
a complemented modular lattice, 
\cite[Lemma~4.83]{McKMcnTay88}
implies that $\mu$ is the join of the atoms of 
$\interval{\bottom{A}}{\mu}$.  
Since every pair of prime quotients is
in the same projectivity class,
for each pair of atoms $\alpha_1, \alpha_2\in \interval{\bottom{A}}{\mu}$
we have that $\interval{\bottom{A}}{\alpha_1}\leftrightsquigarrow
\interval{\bottom{A}}{\alpha_2}$.
Thus,
Lemma~\ref{lemma:isomorphisms_polynomial_restrictions_projective_intervals}
implies that $(o/\alpha_1; +_o)$ and $(o/\alpha_2; +_o)$ are
$\ab{R}_o$-isomorphic as $\ab{R}_o$-submodules of $(o/\mu; +_o)$.  

Next, we prove that $\ab{R}_o$ acts faithfully on $(o/\alpha; +_o)$.
To this end, let $r\in R_o$ with $r(o/\alpha)=\{o\}$.
We show that $r(o/\mu)=\{o\}$. Let $x\in o/\mu$, and
let $\alpha_1, \dots, \alpha_m$ be atoms 
of $\interval{\bottom{A}}{\mu}$ such that 
$\bigvee_{i=1}^{m}\alpha_i=\mu$
and for all $i\leq m$ setting
\[\beta_i:=\bigvee_{\substack{j=1\\j\neq i}}^m \alpha_j\] we have
$\beta_i<\mu$.
The fact that any set with this property has cardinality 
$m$ follows
from 
Lemma~\ref{lemma:Lemma_4.1_revised}.
Let $b_1, \dots, b_m$ be the elements of $o/\mu$
constructed in
Lemma~\ref{lemma:set_of_generators_of_congruence_classes_abelian_congruence}.
Since all the $\ab{R}_o$-submodules $(o/\alpha_i; +_o)$
are $\ab{R}_o$-isomorphic,
we have that $r(b_i)=o$ for all $i\leq m$.
Thus, $r(x)=r(b_1+_o\dots +_o b_m)=r(b_1)+_o\dots +_or(b_m)=o$. 

Since $\alpha$ is an atom, it is generated by any 
pair of the form $(a, o)$ with $a\in o/\alpha\setminus\{o\}$.
Thus, \cite[Theorem~4.70(iii)]{McKMcnTay88} implies that 
for each pair $a,b\in o/\alpha\setminus\{o\}$ there 
exists $p\in \POL\ari{1}\ab{A}$ such that
$p(a)=b$ and $p(o)=o$. Hence, $r:=p\restrict{o/\mu}\in R_o$ 
and satisfies $r\cdot a=b$. Therefore, $(o/\alpha; +_o)$ is an irreducible 
$\ab{R}_o$-module.
Thus, since $\ab{R}_o$ acts faithfully on $(o/\alpha;+_o)$, 
$\ab{R}_o$ is primitive and therefore, by Jacobson's Density Theorem
(cf.~\cite[page~199]{BSAII}),
$\ab{R}_o$ is isomorphic to the matrix ring $\ab{M}_n(\ab{D})$
where $\ab{D}$ is the field of $\ab{R}_o$-endomorphisms of $o/\alpha$
and $n$ is the dimension of $(o/\alpha;+_o)$ over $\ab{D}$.

Since $(o/\mu;+_o)$ is the sum of $m$ simple 
$\ab{R}_o$-modules that are $\ab{R}_o$-isomorphic to 
$(o/\alpha;+_o)$, the $\ab{R}_o$-module
$(o/\mu;+_o)$ is $\ab{R}_o$-isomorphic to $(o/\alpha;+_o)^m$.
Since the module $(o/\alpha;+_o)$ is isomorphic to 
$\ab{D}^n$, we obtain that 
$(o/\mu;+_o)$ is isomorphic to $\ab{D}^{(n\times m)}$. 
\end{proof}
In the following Corollary we specialize the results from 
Proposition~\ref{prop:identification_with_matrices}
to the case when $(\ab{A},\mu)$ satisfies (AB$p$). 
\begin{corollary}\label{cor:introducing_ABp_field_abelian_congruences}
Let $\ab{A}$ be a finite Mal'cev algebra, let $\mu$ be an abelian
element of $\Con{\ab{A}}$ such that $\interval{\bottom{A}}{\mu}$ is a
finite simple complemented modular lattice of height $m$, and 
$(\ab{A}, \mu)$ satisfies (AB$p$),
let $o\in A$ with $\card{o/\mu}>1$, and
let $\ab{R}_o$ be defined as in 
\eqref{eq:equazione_che_definisce_l'universo_dell'anello_dei_polinomi_ristretti}
and \eqref{eq:equazione_che_definisce_le_operazioni_dell'anello_dei_polinomi_ristretti}.
Then there exist
a ring isomorphism 
$\epsilon_{\ab{R}_o}\colon \ab{R}_o\to \GF{p}$,
and
a group isomorphism $\epsilon_\mu^o\colon (o/\mu; +_o)\to \Z_p^m$ such
that for all $r\in R_o$ and $v\in o/\mu$ we have 
$\epsilon_\mu^o (r(v))=\epsilon_{\ab{R}_o}(r)\cdot \epsilon_\mu^o(v).$
\end{corollary}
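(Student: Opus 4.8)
The plan is to derive the corollary directly from Proposition~\ref{prop:identification_with_matrices}, using the hypothesis (AB$p$) only to pin down the field $\ab{D}$ and the matrix size $n$. First I would invoke Proposition~\ref{prop:identification_with_matrices} to obtain $n\in\N$, a field $\ab{D}$, a ring isomorphism $\epsilon_{\ab{R}_o}\colon\ab{R}_o\to\ab{M}_n(\ab{D})$ and a group isomorphism $\epsilon_\mu^o\colon(o/\mu;+_o)\to\ab{D}^{(n\times m)}$ satisfying the stated action identity. I would also recall that, for any atom $\alpha$ of $\interval{\bottom{A}}{\mu}$, the proof of that proposition establishes that $(o/\alpha;+_o)$ is an irreducible $\ab{R}_o$-module isomorphic to $\ab{D}^n$, and that all such atom-classes are mutually $\ab{R}_o$-isomorphic (as $\interval{\bottom{A}}{\mu}$ is simple, so all prime quotients lie in one projectivity class).

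Next I would compute $\card{o/\alpha}$ for a fixed atom $\alpha$. Since $\mu$ is abelian we have $[\alpha,\alpha]\leq[\mu,\mu]=\bottom{A}$, so the prime quotient $\quotienttct{\bottom{A}}{\alpha}$ is abelian and lies in $\interval{\bottom{A}}{\mu}$; hence (AB$p$) applies and yields $\card{(o/\bottom{A})/(\alpha/\bottom{A})}\in\{1,p\}$. As $\bottom{A}$ is the identity congruence, this cardinality equals $\card{o/\alpha}$, so $\card{o/\alpha}\in\{1,p\}$. To exclude the value $1$, I would use $\card{o/\mu}>1$ together with Lemma~\ref{lemma:set_of_generators_of_congruence_classes_abelian_congruence}: if every atom-class were trivial, then writing any $x\in o/\mu$ as $b_1+_o\dots+_o b_l$ with $b_i\in o/\alpha_i=\{o\}$ would force $x=o$, contradicting $\card{o/\mu}>1$. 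Thus at least one atom-class, and hence (by the $\ab{R}_o$-isomorphisms) every atom-class, is nontrivial, so $\card{o/\alpha}=p$.

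Finally I would translate this cardinality constraint back through the identification $(o/\alpha;+_o)\cong\ab{D}^n$ of Proposition~\ref{prop:identification_with_matrices}, which gives $\card{\ab{D}}^n=p$. Because $p$ is prime and $\card{\ab{D}}\geq 2$, the only possibility is $n=1$ and $\card{\ab{D}}=p$, whence $\ab{D}\cong\GF{p}$. Substituting $n=1$ and $\ab{D}\cong\GF{p}$ into the conclusion of the proposition yields $\ab{M}_n(\ab{D})=\ab{M}_1(\GF{p})\cong\GF{p}$ and $\ab{D}^{(n\times m)}=\GF{p}^{(1\times m)}\cong\Z_p^m$. The required isomorphisms $\epsilon_{\ab{R}_o}\colon\ab{R}_o\to\GF{p}$ and $\epsilon_\mu^o\colon(o/\mu;+_o)\to\Z_p^m$, together with the action identity, are then exactly those produced by the proposition under these identifications.

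I expect the only genuinely delicate point to be the exclusion of $\card{o/\alpha}=1$: one must be sure that the global hypothesis $\card{o/\mu}>1$ really propagates to the individual atom-classes, which relies on the additive decomposition of Lemma~\ref{lemma:set_of_generators_of_congruence_classes_abelian_congruence} and on the $\ab{R}_o$-isomorphism of all atom-classes coming from the simplicity of the interval. Everything else is a routine specialization of the already-proved proposition, with $p$ prime doing the work of forcing $n=1$.
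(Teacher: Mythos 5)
Your argument is correct and is precisely the specialization that the paper leaves implicit (the corollary is stated without a written proof, introduced only by the remark that it ``specializes'' Proposition~\ref{prop:identification_with_matrices}): you apply (AB$p$) to the abelian prime quotient $\quotienttct{\bottom{A}}{\alpha}$ for an atom $\alpha$, rule out the trivial atom-class via $\card{o/\mu}>1$, and then $\card{\ab{D}}^{n}=p$ forces $n=1$ and $\ab{D}\cong\GF{p}$. This matches the intended route; your extra care in propagating $\card{o/\mu}>1$ down to the atom-classes through Lemma~\ref{lemma:set_of_generators_of_congruence_classes_abelian_congruence} is a welcome detail rather than a deviation.
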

In the proof of Proposition~\ref{prop:char_congruence_preserving_functions_module}
we will use the following observation:
\begin{lemma}\label{lemma:compoition_of_morphisms_as_suggested_by_referee}
Let $\ab{B}$ and $\ab{C}$ be two algebras
and let $\phi$ be an isomorphism from $\ab{B}$ to $\ab{C}$. 
Let $T\subseteq B$ and let $f\colon T\to B$. 
Then $f$ preserves the congruences of $\ab{B}$ if and only if 
the function $\phi\circ f\circ \phi^{-1}\colon \phi(T)\to C$ 
preserves the congruences of $\ab{C}$. 
\end{lemma}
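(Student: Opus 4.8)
The plan is to exploit the fact that an isomorphism $\phi\colon \ab{B}\to\ab{C}$ induces a lattice isomorphism between $\Con\ab{B}$ and $\Con\ab{C}$, so that quantifying over the congruences of one algebra amounts to quantifying over those of the other. First I would unfold the relevant definition: a unary partial function preserves the congruences precisely when, for each congruence $\theta$, any two $\theta$-related arguments in its domain are sent to $\theta$-related values. Thus the statement reduces to comparing, congruence by congruence, the behaviour of $f$ against that of $g:=\phi\circ f\circ\phi^{-1}$.

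The key observation is that for every $\theta\in\Con\ab{B}$ the relation $\phi(\theta):=(\phi\times\phi)(\theta)=\{(\phi(a),\phi(b))\mid (a,b)\in\theta\}$ lies in $\Con\ab{C}$, and that $\theta\mapsto\phi(\theta)$ is a bijection from $\Con\ab{B}$ onto $\Con\ab{C}$ with inverse $\sigma\mapsto(\phi^{-1}\times\phi^{-1})(\sigma)$; this is the standard correspondence of congruences along an isomorphism. Granting this, it suffices to show, for a fixed $\theta\in\Con\ab{B}$, that $f$ preserves $\theta$ if and only if $g$ preserves $\phi(\theta)$, since every congruence of $\ab{C}$ is of the form $\phi(\theta)$.

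For the main equivalence I would argue directly. Take $x',y'\in\phi(T)$ and write $x'=\phi(x)$, $y'=\phi(y)$ with $x,y\in T$; since $\phi$ is injective, $x'\mathrel{\phi(\theta)}y'$ holds if and only if $x\mathrel{\theta}y$. Moreover $g(x')=\phi(f(\phi^{-1}(\phi(x))))=\phi(f(x))$ and likewise $g(y')=\phi(f(y))$, so $g(x')\mathrel{\phi(\theta)}g(y')$ holds if and only if $f(x)\mathrel{\theta}f(y)$. Hence the implication ``$x\mathrel{\theta}y\Rightarrow f(x)\mathrel{\theta}f(y)$'' for all $x,y\in T$ is equivalent to ``$x'\mathrel{\phi(\theta)}y'\Rightarrow g(x')\mathrel{\phi(\theta)}g(y')$'' for all $x',y'\in\phi(T)$, which is exactly the assertion that $f$ preserves $\theta$ iff $g$ preserves $\phi(\theta)$.

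There is no genuine obstacle here: the lemma is a bookkeeping statement, and the only point requiring a moment of care is the transport of congruences along $\phi$ together with the observation that $\phi$ carries $\Con\ab{B}$ bijectively onto $\Con\ab{C}$, so that the universal quantifier over congruences can be moved freely between the two algebras. Alternatively, one could prove only the forward direction and then invoke symmetry, applying it with the roles of $\ab{B},\ab{C},f,\phi$ replaced by $\ab{C},\ab{B},g,\phi^{-1}$, to obtain the reverse implication without recomputation.
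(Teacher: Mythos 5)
Your argument is correct. The paper states this lemma without proof (it is presented as a routine observation preceding Proposition~\ref{prop:char_congruence_preserving_functions_module}), and your verification---transporting each congruence $\theta\in\Con\ab{B}$ to $(\phi\times\phi)(\theta)\in\Con\ab{C}$ via the standard bijection induced by $\phi$, and then checking the preservation condition pointwise using the injectivity of $\phi$---is exactly the standard argument one would supply to fill in the details.
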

\begin{proposition}\label{prop:char_congruence_preserving_functions_module}
Let $\ab{A}$ be a finite Mal'cev algebra,
let $\mu$ be an abelian congruence of $\ab{A}$ such 
that $\interval{\bottom{A}}{\mu}$ is a simple complemented
modular lattice of height $h$, let $o\in A$ with $\card{o/\mu}>1$,
let $T\subseteq o/\mu$, let $n$ and $\epsilon_\mu^o$ 
be the natural number and the group homomorphism constructed in 
Proposition~\ref{prop:identification_with_matrices}, 
and let $f\colon T\to 
o/\mu$. Then $f$ is congruence preserving if and only if 
$\epsilon_\mu^o\circ f\circ (\epsilon_\mu^o)^{-1}$ is a
partial congruence preserving function of the 
$\ab{M}_n(\ab{D})$-module $\ab{D}^{(n\times h)}$.
\end{proposition}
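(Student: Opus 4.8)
The plan is to reduce the problem to the module setting via the isomorphism $\epsilon_\mu^o$ and then invoke Lemma~\ref{lemma:compoition_of_morphisms_as_suggested_by_referee}. The key difficulty is that $\epsilon_\mu^o$ is only asserted to be a \emph{group} isomorphism between $(o/\mu; +_o)$ and the module $\ab{D}^{(n\times h)}$, whereas Lemma~\ref{lemma:compoition_of_morphisms_as_suggested_by_referee} requires an isomorphism between algebras so that congruence preservation transfers. Thus the main obstacle is to identify two algebraic structures, one on $o/\mu$ and one on $\ab{D}^{(n\times h)}$, that $\epsilon_\mu^o$ takes to be a genuine isomorphism and whose congruences faithfully encode the compatible partial functions we care about.

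First I would recall from Lemma~\ref{lemma:piu_meno_fanno_modulo_su-anello_polinomi_ristretto_nella_classe_congruenza} that since $[\mu,\mu]=\bottom{A}$, the restriction $\ab{A}\restrict{o/\mu}$ is polynomially equivalent to the $\ab{R}_o$-module $(o/\mu; +_o)$. Polynomial equivalence preserves the congruence lattice, so $f\colon T\to o/\mu$ is congruence preserving as a partial function of $\ab{A}$ if and only if it is congruence preserving as a partial function of the $\ab{R}_o$-module $(o/\mu; +_o)$; this is the crux, because the congruences of $\ab{A}$ restricted to $o/\mu$ are exactly the congruences of the module structure. Here one must be slightly careful: a partial function on a subset $T$ of a single $\mu$-class preserves the congruences of $\ab{A}$ iff it preserves those congruences $\theta$ of $\ab{A}$ with $\theta\leq\mu$ (congruences above or incomparable to $\mu$ act trivially on the single class $o/\mu$), and these correspond exactly to submodule-congruences of $(o/\mu;+_o)$ over $\ab{R}_o$.

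Next I would turn $\epsilon_\mu^o$ into a module isomorphism. By Proposition~\ref{prop:identification_with_matrices} there is a ring isomorphism $\epsilon_{\ab{R}_o}\colon \ab{R}_o\to\ab{M}_n(\ab{D})$ together with the group isomorphism $\epsilon_\mu^o$ satisfying the intertwining identity $\epsilon_\mu^o(r(v))=\epsilon_{\ab{R}_o}(r)\cdot\epsilon_\mu^o(v)$ for all $r\in R_o$ and $v\in o/\mu$. This identity says precisely that $\epsilon_\mu^o$, when we transport the $\ab{R}_o$-action along $\epsilon_{\ab{R}_o}$, is an isomorphism of modules from the $\ab{R}_o$-module $(o/\mu;+_o)$ to the $\ab{M}_n(\ab{D})$-module $\ab{D}^{(n\times h)}$ (note $m=h$ since both denote the height of $\interval{\bottom{A}}{\mu}$). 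Since a module isomorphism over isomorphic rings is in particular an isomorphism of the underlying algebras in the sense required by Lemma~\ref{lemma:compoition_of_morphisms_as_suggested_by_referee}, congruence preservation transfers: $f$ preserves the congruences of the $\ab{R}_o$-module iff $\epsilon_\mu^o\circ f\circ(\epsilon_\mu^o)^{-1}$ preserves the congruences of the $\ab{M}_n(\ab{D})$-module $\ab{D}^{(n\times h)}$.

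Finally I would chain these equivalences: $f$ is congruence preserving for $\ab{A}$ $\iff$ it is congruence preserving for the $\ab{R}_o$-module $(o/\mu;+_o)$ (by polynomial equivalence) $\iff$ its conjugate $\epsilon_\mu^o\circ f\circ(\epsilon_\mu^o)^{-1}$ is a partial congruence-preserving function of the $\ab{M}_n(\ab{D})$-module $\ab{D}^{(n\times h)}$ (by Lemma~\ref{lemma:compoition_of_morphisms_as_suggested_by_referee} applied to the module isomorphism). The one point I would write out carefully is the first biconditional — specifically why restricting attention to congruences $\leq\mu$ loses nothing when $T$ lies inside a single $\mu$-class — since everything else is an immediate application of previously established results.
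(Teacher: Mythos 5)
Your proposal is correct and follows essentially the same route as the paper: invoke Lemma~\ref{lemma:piu_meno_fanno_modulo_su-anello_polinomi_ristretto_nella_classe_congruenza} for the polynomial equivalence of $\ab{A}\restrict{o/\mu}$ with the $\ab{R}_o$-module, transport the module structure to $\ab{D}^{(n\times h)}$ along $\epsilon_{\ab{R}_o}$ so that $\epsilon_\mu^o$ becomes a module isomorphism, and conclude via Lemma~\ref{lemma:compoition_of_morphisms_as_suggested_by_referee}. Your extra care about why only congruences below $\mu$ matter for a partial function whose domain and range lie in a single $\mu$-class is a sensible elaboration of a step the paper leaves implicit, not a different argument.
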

\begin{proof}
Lemma~\ref{lemma:piu_meno_fanno_modulo_su-anello_polinomi_ristretto_nella_classe_congruenza}
implies that the induced algebra 
$\ab{A}\restrict{o/\mu}$ and the 
$\ab{R}_o$-module $(o/\mu; +_o)$ 
are polynomially equivalent.
Hence these two algebras have the same congruences 
and the same partial congruence-preserving functions. 
Furthermore, the ring isomorphism
$\epsilon_{\ab{R}_o}$ between $\ab{R}_o$ and
$\ab{M}_n(\ab{D})$ constructed in 
Proposition~\ref{prop:identification_with_matrices}
defines on $\ab{D}^{(n\times m)}$ the structure
of an $\ab{R}_o$-module, and 
Proposition~\ref{prop:identification_with_matrices}
implies that this module is isomorphic to 
$(o/\mu;+_o)$ via $\epsilon_\mu^o$. 
Then the statement of the proposition follows from 
Lemma~\ref{lemma:compoition_of_morphisms_as_suggested_by_referee}.
\end{proof}
\begin{corollary}\label{cor.caratterizzazione_funzioni_compatibili_in_una_medesima_classe}
Let $\ab{A}$ be a finite Mal'cev algebra,
let $\mu$ be an abelian congruence
of $\ab{A}$ 
such that $\interval{\bottom{A}}{\mu}$ is 
a simple complemented modular lattice of height $h$
and $(\ab{A}, \mu)$ satisfies (AB2),
let $o\in A$ with $\card{o/\mu}>1$, and let $\epsilon_\mu^o$ be 
the group isomorphism built in 
Corollary~\ref{cor:introducing_ABp_field_abelian_congruences}.
Then a partial function $f\colon T\subseteq o/\mu\to o/\mu$
preserves the congruences of $\ab{A}$ 
if and only if the mapping
$\epsilon_\mu^o\circ f\circ(\epsilon_\mu^o)^{-1}\colon \epsilon_\mu^o(T)
\to \Z_2^h$
is a congruence-preserving map of the
$\GF{2}$-vector space $\Z_2^h$. 
\end{corollary}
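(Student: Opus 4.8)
The plan is to deduce Corollary~\ref{cor.caratterizzazione_funzioni_compatibili_in_una_medesima_classe} from Proposition~\ref{prop:char_congruence_preserving_functions_module} by specializing the field $\ab{D}$ and the matrix size $n$ through the (AB2) hypothesis, using Corollary~\ref{cor:introducing_ABp_field_abelian_congruences}.

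First I would apply Corollary~\ref{cor:introducing_ABp_field_abelian_congruences} with $p=2$. Since $(\ab{A},\mu)$ satisfies (AB2) and $\interval{\bottom{A}}{\mu}$ is a finite simple complemented modular lattice of height $h$, that corollary yields a ring isomorphism $\epsilon_{\ab{R}_o}\colon \ab{R}_o\to\GF{2}$ and a group isomorphism $\epsilon_\mu^o\colon (o/\mu;+_o)\to\Z_2^h$ intertwining the two actions, i.e.\ satisfying $\epsilon_\mu^o(r(v))=\epsilon_{\ab{R}_o}(r)\cdot\epsilon_\mu^o(v)$. Comparing this with the decomposition $\ab{R}_o\cong\ab{M}_n(\ab{D})$ and $(o/\mu;+_o)\cong\ab{D}^{(n\times h)}$ produced in Proposition~\ref{prop:identification_with_matrices}, the isomorphism $\ab{M}_n(\ab{D})\cong\GF{2}$ forces $n=1$ and $\card{\ab{D}}=2$, that is $\ab{D}=\GF{2}$. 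In particular the $\epsilon_\mu^o$ of Corollary~\ref{cor:introducing_ABp_field_abelian_congruences} is the one of Proposition~\ref{prop:identification_with_matrices} under these identifications, and the target module $\ab{D}^{(n\times h)}$ becomes $\GF{2}^{(1\times h)}=\Z_2^h$, acted on by $\ab{M}_1(\GF{2})=\GF{2}$ through scalar multiplication.

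Next I would quote Proposition~\ref{prop:char_congruence_preserving_functions_module} directly: a partial function $f\colon T\subseteq o/\mu\to o/\mu$ preserves the congruences of $\ab{A}$ if and only if $\epsilon_\mu^o\circ f\circ(\epsilon_\mu^o)^{-1}$ is a partial congruence-preserving function of the $\ab{M}_n(\ab{D})$-module $\ab{D}^{(n\times h)}$. Substituting $n=1$ and $\ab{D}=\GF{2}$ turns the right-hand side into a statement about the $\GF{2}$-module $\Z_2^h$. The only point requiring a word of care---and it is essentially the whole of the remaining work---is that over the field $\GF{2}$ a module is exactly a vector space and a module homomorphism is exactly a linear map, so the submodules of $\Z_2^h$ (equivalently, its congruences) and its partial congruence-preserving functions coincide with those of the $\GF{2}$-vector space $\Z_2^h$. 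With this identification the equivalence supplied by Proposition~\ref{prop:char_congruence_preserving_functions_module} is literally the asserted equivalence, so I do not expect any genuine obstacle: the substantive content has already been carried out in Proposition~\ref{prop:identification_with_matrices}, Corollary~\ref{cor:introducing_ABp_field_abelian_congruences}, and Proposition~\ref{prop:char_congruence_preserving_functions_module}.
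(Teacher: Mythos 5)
Your proposal is correct and follows exactly the route the paper intends: the corollary is stated without proof precisely because it is the specialization of Proposition~\ref{prop:char_congruence_preserving_functions_module} obtained by feeding in the (AB2) data from Corollary~\ref{cor:introducing_ABp_field_abelian_congruences}, which forces $n=1$ and $\ab{D}=\GF{2}$ so that the $\ab{M}_n(\ab{D})$-module $\ab{D}^{(n\times h)}$ becomes the $\GF{2}$-vector space $\Z_2^h$. Your observation that a $\GF{2}$-module is the same thing as a $\GF{2}$-vector space closes the only remaining gap, so nothing further is needed.
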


\section{Minimal sets of abelian intervals that are simple complemented modular lattices}\label{sec:Idziak's_contributions}
In this section we investigate the properties of the minimal sets
(cf.~\cite[Definition~2.5]{HobbMcK})
of those tame quotients $\quotienttct{\bottom{A}}{\mu}$
where $\mu$ is abelian and $\interval{\bottom{A}}{\mu}$ 
is a simple complemented modular lattice. 
\begin{lemma}\label{lemma:shifting_zero_minimal_set}
Let $ \ab{A}$ be a finite Mal'cev algebra, 
let $\mu$ be an abelian congruence of
$\ab{A}$, and let $o\in A$ be such that 
there exists a $\quotienttct{\bottom{A}}{\mu}$-minimal set
$U$ contained in $o/\mu$. 
Then there exists a $\quotienttct{\bottom{A}}{\mu}$-minimal set
$U'$
such that $o\in U'$ and $U'\subseteq o/\mu$.  
\end{lemma}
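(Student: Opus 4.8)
The plan is to transport the given minimal set $U$ by a polynomial translation so that a chosen point of $U$ is carried onto $o$, all while staying inside the class $o/\mu$. Fix any $u\in U$; since $U\subseteq o/\mu$ we have $u\mathrel{\mu}o$. Because $\mu$ is abelian, $[\mu,\mu]=\bottom{A}$, so Lemma~\ref{lemma:piu_emeno_fanno_gruppoo_abeliano_nella_classe_congruenza} makes $(o/\mu;+_o,-_o,o)$ an abelian group. I would then introduce the two unary polynomials $t(x):=d(x,u,o)$ and $s(x):=d(x,o,u)$, which are legitimate polynomials since $u,o\in A$. On $o/\mu$ these are respectively the translations $x\mapsto x-_o u$ and $x\mapsto x+_o u$; a one-line congruence computation (using $u\mathrel{\mu}o$ and $d(o,o,o)=o$) shows each maps $o/\mu$ into itself, and they are mutually inverse bijections of $o/\mu$. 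Crucially $t(u)=d(u,u,o)=o$.

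Set $U':=t(U)$. By construction $o=t(u)\in U'$ and $U'\subseteq o/\mu$, so the only thing left is to verify that $U'$ is a $\quotienttct{\bottom{A}}{\mu}$-minimal set. Write $U=h(A)$ for some $h\in\POL\ari{1}\ab{A}$ that does not collapse $\mu$ into $\bottom{A}$ (such $h$ exists as $U$ is minimal). Then $U'=(t\circ h)(A)$. Since $h$ is congruence preserving, the pair it separates lies in $U\subseteq o/\mu$ with $\mu$-related endpoints; as $t$ is injective on $o/\mu$, the composite $t\circ h$ again fails to collapse $\mu$, so by finiteness its range $U'$ contains a $\quotienttct{\bottom{A}}{\mu}$-minimal set $V$.

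To upgrade ``$U'$ contains a minimal set'' to ``$U'$ is a minimal set'' I would pull back along $s$. Because $V\subseteq U'\subseteq o/\mu$ and $s$ is injective on $o/\mu$ with $s(U')=U$, the same separation argument shows $s(V)\subseteq U$ is the range of a non-collapsing polynomial; hence $s(V)$ contains a minimal set, which by minimality of $U$ must be $U$ itself, forcing $s(V)=U$. Injectivity of $s$ on $o/\mu$ then yields $\card{V}=\card{U}=\card{U'}$, the last equality since $t$ is injective on $o/\mu$. As $V\subseteq U'$ are finite sets of equal cardinality, $V=U'$, so $U'$ is the minimal set we sought.

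The one delicate point is precisely this last upgrade: a translation such as $t$ need not be a polynomial \emph{permutation} of all of $\ab{A}$, so I cannot simply invoke ``polynomial permutations preserve minimal sets.'' The remedy is that $t$ possesses a genuine polynomial inverse $s$ \emph{on the class} $o/\mu$, and both are injective there; this two-sided control, combined with the standard fact that in a finite algebra the range of any unary polynomial not collapsing $\quotienttct{\bottom{A}}{\mu}$ contains a $\quotienttct{\bottom{A}}{\mu}$-minimal set, is exactly what makes the cardinality comparison decisive. If it were already known that all $\quotienttct{\bottom{A}}{\mu}$-minimal sets are equinumerous, one could bypass the pull-back and conclude $V=U'$ immediately from $\card{V}=\card{U}=\card{U'}$.
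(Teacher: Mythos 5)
Your construction of $U'$ coincides with the paper's: both translate $U$ inside the abelian group $(o/\mu;+_o,-_o,o)$ of Lemma~\ref{lemma:piu_emeno_fanno_gruppoo_abeliano_nella_classe_congruenza} by a unary polynomial of the form $x\mapsto d(x,u,o)$ with $u\in U$, which fixes $o/\mu$ setwise and carries $u$ to $o$ (the paper takes $u=e(o)$ for the idempotent $e$ with range $U$, but, as your argument shows, any $u\in U$ does the job, so the idempotent is not really needed for this step). Where you genuinely diverge is in certifying that the translate is still a $\quotienttct{\bottom{A}}{\mu}$-minimal set. The paper checks that the translation is injective on $U$ --- the same cancellation in the abelian group that you use --- and then invokes \cite[Theorem~2.8(3)]{HobbMcK}, which says that an injective polynomial image of a minimal set is again a minimal set. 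You instead re-derive exactly the instance of that theorem you need: $U'=(t\circ h)(A)$ is the range of a polynomial that does not collapse $\mu$, hence contains a minimal set $V$; pulling $V$ back along the two-sided polynomial inverse $s$ of $t$ on $o/\mu$ and using the minimality of $U$ gives $s(V)=U$, so $\card{V}=\card{U}=\card{U'}$ and $V=U'$ by finiteness. This is correct; it is somewhat longer than citing \cite[Theorem~2.8]{HobbMcK}, but it buys you a self-contained argument that uses only the definition of minimal set and finiteness, and in particular does not rely on the tameness of $\quotienttct{\bottom{A}}{\mu}$ that Theorem~2.8 presupposes (a hypothesis the paper's proof leaves implicit at this point). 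In both proofs the essential content is the same: injectivity of the translation on $o/\mu$ supplied by the abelian group structure.
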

\begin{proof}
If $o\in U$, we let $U':=U$.
Let us now assume that $o\notin U$. 
Then \cite[Theorem~2.8(2)]{HobbMcK} implies 
that there exists an idempotent unary polynomial 
$e$ such that $e(A)=U$. 
We let 
\[
U':=\{d(u, e(o),o)\mid u\in U\}.
\]
Since $e(o)\in U$, we have that $o=d(e(o), e(o), o)\in U'$.
Moreover, since $U\subseteq o/\mu$, for each 
$u\in U$ we have that $d(u, e(o), o)\mathrel{\mu} o$ and 
therefore, $U'\subseteq o/\mu$. 
Next, we show that the map 
\[
\{(u, d(u, e(o), o))\mid u\in U\}
\]
is injective. To this end, let $u_1, u_2\in U$.
Then $d(u_1, e(o), o)=u_1-_o e(o)$ and $d(u_2, e(o), o)
=u_2-_o e(o)$.
Thus, if $d(u_1, e(o), o)=d(u_2, e(o), o)$, then 
$u_1-_o e(o)=u_2-_o e(o)$,
and by
Lemma~\ref{lemma:piu_emeno_fanno_gruppoo_abeliano_nella_classe_congruenza}
$u_1=u_2$. 
Thus, by \cite[Theorem~2.8(3)]{HobbMcK} $U'$ is a 
$\quotienttct{\bottom{A}}{\mu}$-minimal set.
\end{proof}
The following theorem was first proved for finite 
expanded groups in 1999 during the preparation of~\cite{AicIdz04}.
That proof was never published.
In the following we give a generalization to 
finite Mal'cev algebras.
\begin{theorem}\label{teor:extending_target_minimal-set_to_congruence}
Let $ \ab{A}$ be a finite Mal'cev algebra,
let $\mu$ be an abelian congruence of
$\ab{A}$ such that $\interval{\bottom{A}}{\mu}$ is 
a finite simple complemented modular lattice, and let $o\in A$ 
be such that 
$\card{o/\mu}>1$ and there exists a 
$\quotienttct{\bottom{A}}{\mu}$-minimal set 
$V\subseteq o/\mu$. 
Then, there exists an idempotent
unary polynomial $e$ such that $e(A)=o/\mu$. 
\end{theorem}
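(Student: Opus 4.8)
The goal is to upgrade a single $\quotienttct{\bottom{A}}{\mu}$-minimal set $V$ contained in a class $o/\mu$ into a full idempotent polynomial retraction onto the \emph{entire} class $o/\mu$. The plan is to exploit the coordinatization from Proposition~\ref{prop:identification_with_matrices}: since $\interval{\bottom{A}}{\mu}$ is a simple complemented modular lattice, the $\ab{R}_o$-module $(o/\mu;+_o)$ is isomorphic to $\ab{D}^{(n\times m)}$ where $\ab{R}_o\cong \ab{M}_n(\ab{D})$, and the induced algebra $\ab{A}\restrict{o/\mu}$ is polynomially equivalent to this module. The strategy is thus to translate the problem into the matrix-module setting, build the desired idempotent there, and pull it back.

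First I would normalize the situation using Lemma~\ref{lemma:shifting_zero_minimal_set}, replacing $V$ by a minimal set $U'$ with $o\in U'\subseteq o/\mu$; this lets us work with polynomials fixing $o$, matching the hypothesis $p(o)=o$ used throughout Section~\ref{sec:coord_simple-comple_mod}. By \cite[Theorem~2.8(2)]{HobbMcK} there is an idempotent unary polynomial $e_0$ with $e_0(A)=U'$, and I would restrict attention to $r:=e_0\restrict{o/\mu}$, which lies in $R_o$ and is idempotent as an element of $\ab{R}_o$. Under the isomorphism $\epsilon_{\ab{R}_o}\colon\ab{R}_o\to\ab{M}_n(\ab{D})$, this $r$ becomes an idempotent matrix $E\in\ab{M}_n(\ab{D})$, i.e.\ $E^2=E$, whose image on the module is a proper nonzero $\ab{R}_o$-submodule (nonzero and proper because $U'$ is a minimal set with $\card{U'}>1$ inside a class with $\card{o/\mu}>1$ but $U'\neq o/\mu$ when the height exceeds one).

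The heart of the argument is to produce a \emph{new} idempotent in $\ab{R}_o$ whose range is all of $(o/\mu;+_o)$. In the matrix ring $\ab{M}_n(\ab{D})$ the identity matrix $I$ is the unique idempotent of full rank, and it acts as the identity on the whole module $\ab{D}^{(n\times m)}$; so what I really want is to realize $I$ itself as the restriction of an idempotent unary polynomial of $\ab{A}$. Since $\ab{R}_o$ is by construction the ring of restrictions $p\restrict{o/\mu}$ with $p(o)=o$, and $\epsilon_{\ab{R}_o}$ is a ring isomorphism onto $\ab{M}_n(\ab{D})$, there exists $p^*\in\POL\ari{1}\ab{A}$ with $p^*(o)=o$ and $\epsilon_{\ab{R}_o}(p^*\restrict{o/\mu})=I$; then $p^*\restrict{o/\mu}$ is the identity on $o/\mu$, and its square is again the identity, so $p^*$ is idempotent \emph{on} $o/\mu$. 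The remaining task is to turn this into a genuine idempotent polynomial $e$ with $e(A)=o/\mu$ (not merely idempotent on $o/\mu$): I would iterate $p^*$ to absorb its behaviour off $o/\mu$, replacing it by a suitable power $e:=(p^*)^{N}$ so that $e$ becomes idempotent on all of $A$ while still acting as the identity on $o/\mu$, using finiteness of $A$ to guarantee that some power of $p^*$ is idempotent in the monoid of unary polynomials. Finally I would check $e(A)=o/\mu$: the range is contained in $o/\mu$ because $p^*$ maps $o/\mu$ into itself and collapses everything else into this class via the identity action there, and it contains $o/\mu$ because $e$ fixes $o/\mu$ pointwise.

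The main obstacle I anticipate is the passage from ``idempotent on $o/\mu$'' to ``globally idempotent with range exactly $o/\mu$'': the isomorphism $\epsilon_{\ab{R}_o}$ only controls the behaviour of $p^*$ \emph{restricted} to $o/\mu$, and says nothing about $p^*$ on the rest of $A$, so I must argue separately that the off-class behaviour can be tamed. The clean way is to invoke finiteness: in the finite transformation monoid generated by $p^*$ there is a power $(p^*)^N$ that is idempotent, and since $(p^*)\restrict{o/\mu}=\mathrm{id}$ forces $(p^*)^N\restrict{o/\mu}=\mathrm{id}$, this idempotent power is the desired $e$; the range equals the fixed-point set, which contains $o/\mu$, while compatibility with $\mu$ (every polynomial preserves $\mu$) forces the range into the single class $o/\mu$, giving equality. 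Verifying that this idempotent's range is a $\quotienttct{\bottom{A}}{\mu}$-set of the right shape, and reconciling it with the minimality hypotheses, is where the bookkeeping from \cite[Theorem~2.8]{HobbMcK} and Proposition~\ref{prop:identification_with_matrices} will need to be combined carefully.
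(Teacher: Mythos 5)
Your setup (normalizing via Lemma~\ref{lemma:shifting_zero_minimal_set}, passing to the coordinatization of Proposition~\ref{prop:identification_with_matrices}) matches the paper, but the final step has a genuine gap, and it is exactly at the point you flag as ``the main obstacle.'' Choosing $p^*\in\POL\ari{1}\ab{A}$ with $\epsilon_{\ab{R}_o}(p^*\restrict{o/\mu})=I$ is vacuous --- the identity polynomial already does this --- and it gives you no control whatsoever over $p^*$ off the class $o/\mu$. Your proposed repair fails on both counts: (i) an idempotent power $(p^*)^N$ has range equal to its fixed-point set, which \emph{contains} $o/\mu$ (since $(p^*)^N\restrict{o/\mu}=\mathrm{id}$) but need not be contained in it; and (ii) the claim that ``compatibility with $\mu$ forces the range into the single class $o/\mu$'' is false: preserving $\mu$ only means mapping $\mu$-classes into $\mu$-classes, and the identity polynomial is a counterexample to your claim --- it preserves $\mu$, induces the identity matrix, is idempotent, and has range $A$. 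Note also that in your final construction the minimal set $V$ plays no role at all (you only use it to produce an idempotent matrix $E$ that is then discarded), which is a warning sign, since the hypothesis that a minimal set lies inside $o/\mu$ is what makes the theorem true.

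The paper's proof supplies precisely the missing mechanism. It writes the identity matrix as $\sum_{i=1}^{n}E^{(i,i)}$ and realizes each rank-one idempotent $E^{(i,i)}$ by a polynomial $e_i:=e_{U_i}\circ p_i$, where $e_{U_i}$ is the tame-congruence-theoretic idempotent polynomial whose \emph{global} range is a minimal set $U_i\subseteq o/\mu$ and $p_i$ induces $E^{(i,i)}$ on $o/\mu$ (after first arranging, via multiplication by $E^{(1,i)}$, that $\epsilon_\mu^o(U_i)$ lies in the set of matrices supported on one row, so that $e_{U_i}$ composed with $p_i$ still induces $E^{(i,i)}$). Each $e_i$ therefore has range inside $o/\mu$ by construction, and the sum $e=e_1+_o\dots+_o e_n$ (taken with the Mal'cev-term addition $d(\cdot,o,\cdot)$) keeps its range inside $o/\mu$ while restricting to the identity there. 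It is this decomposition of $I$ into summands each individually tamed by a minimal set --- not iteration of a single preimage of $I$ --- that closes the argument.
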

\begin{proof}
Let $\ab{R}_o$, $\ab{D}$, $\epsilon_\mu^o$, $\epsilon_{\ab{R}_o}$,
$m$ and $n$ be as constructed in
Proposition~\ref{prop:identification_with_matrices}.
Let $\mathcal{L}_1$ be the set of all the 
$n\times m$ matrices that have non-zero entries 
only in the first row. 
For each pair $(i,j)\in\finset{n}^2$, let $E^{(i,j)}$ be the
$n\times n$ matrix that has the entry $1$ at the $(i,j)$-th place, and 
is zero elsewhere. 
Next we show that 
\begin{equation}\label{eq:eq625_from_idziak_original_manusctipt}
\text{there exists a }\quotienttct{\bottom{A}}{\mu}\text{-minimal 
set } U_1\subseteq o/\mu \text{ such that }
\epsilon_\mu^o(U_1)\subseteq \mathcal{L}_1. 
\end{equation}
Lemma~\ref{lemma:shifting_zero_minimal_set} yields a 
$\quotienttct{\bottom{A}}{\mu}$-minimal set $U$ that contains $o$
and is contained in $o/\mu$.  
Since the interval $\interval{\bottom{A}}{\mu}$ is tight
(cf.~\cite[Exercises~1.14(5)]{HobbMcK}), 
\cite[Theorem~2.11]{HobbMcK} implies that 
$\quotienttct{\bottom{A}}{\mu}$ is tame, 
and therefore, 
\cite[Theorem~2.8(2)]{HobbMcK} yields that there exists 
an idempotent unary polynomial $e_U$ with range $U$.
Let $u\in U\setminus\{o\}$, and let $M_u:=\epsilon_\mu^o(u)$.
Then $M_u$ has a non-zero entry. Let $(i,j)\in \finset{n}^2$
be such that $M_u(i,j)\neq 0$. 
Next, let $f\in \POL\ari{1}\ab{A}$ be such that 
$f\restrict{o/\mu}=(\epsilon_{\ab{R}_o})^{-1}(E^{(1,i)})$.
We have that $f(e_U(u))\neq o$, hence \cite[Theorem~2.8(3)]{HobbMcK}
yields that $U_1:=f(U)$ is a $\quotienttct{\bottom{A}}{\mu}$-minimal 
set, and since $f(o)=o$, we have $f(U)=U_1\subseteq o/\mu$.
Next, we show that 
\begin{equation}\label{eq:eq626_from_idziak_original_manusctipt}
\epsilon_\mu^o(U_1)\subseteq \mathcal{L}_1.
\end{equation}
Let $e_{U_1}$ be the idempotent polynomial with
target $U_1$ given by 
\cite[Theorem~2.8(2) and Theorem~2.11]{HobbMcK}.
We observe that the matrix $E^{(1,i)}$ has 
non-zero entries only in the first row,
a property that carries over to every matrix of the form
$E^{(1,i)}\cdot M$ for 
$M\in\ab{D}^{(n\times m)}$. 
This concludes the proof of
\eqref{eq:eq626_from_idziak_original_manusctipt}.
Hence, 
\eqref{eq:eq625_from_idziak_original_manusctipt} 
follows. 
Next, we show that 
\begin{equation}\label{eq:eq627_from_idziak_original_manusctipt}
\text{there is an idempotent } e_1\in\POL\ari{1}\ab{A} \text{ with }e_1(A)=U_1 
\text{ and } \epsilon_{\ab{R}_o}(e_1)=E^{(1,1)}. 
\end{equation}
Let $p\in\POL\ari{1}\ab{A}$ with $p(o)=o$ and
$\epsilon_{\ab{R}_o}(p)=E^{(1,1)}$. 
Then we set $e_1:=e_{U_1}\circ p$. We claim that 
$e_1(A)=U_1$ and $e_1\restrict{U_1}$ is the identity.
We have that $e_1(A)=e_{U_1}(p(A))\subseteq e_{U_1}(A)=U_1$.
Furthermore, for each $u_1\in U_1$, we have $p(u_1)=u_1$ by the 
special shape of $E^{(1,1)}$. Hence $e_1(u_1)=u_1$. 
Next, we show that $\epsilon_{\ab{R}_o}(e_1)=E^{(1,1)}$. 
Let $E_{U_1}$ be the matrix $\epsilon_{\ab{R}_o}(e_{U_1})$.
By~\eqref{eq:eq626_from_idziak_original_manusctipt} 
the range of $e_{U_1}$ is contained in 
$(\epsilon_\mu^o)^{-1}(\mathcal{L}_1)$; thus
the matrix $E_{U_1}$ has non-zero entries only in the first row.
Moreover, the entry in the first column is $1$, as $E_{U_1}$
is a non-zero idempotent matrix.
Thus, we have that
$E_{U_1}\cdot E^{(1,1)}=E^{(1,1)}$. This proves
\eqref{eq:eq627_from_idziak_original_manusctipt}.

Using the same procedure as above, for each $i\in\finset{n}$
we can construct an idempotent polynomial function
$e_i$ such that $\epsilon_{\ab{R}_o}(e_i)=E^{(i,i)}$ 
and $e_i(A)\subseteq o/\mu$. 
Next, we let 
\[e:=(\epsilon_{\ab{R}_o})^{-1}\Biggl(\sum_{i=1}^{n}\epsilon_{\ab{R}_o}(e_i)\Biggr).\] 
Clearly $e(A)\subseteq o/\mu$. 
Furthermore, $\epsilon_{\ab{R}_o}(e)$ is the identity matrix. Thus,
$e\restrict{o/\mu}$ is the identity. 
\end{proof}

\section{Homogeneous congruences in Mal'cev algebras with (SC1)} \label{sec:homegeneous_congruences}
In this section we introduce the notion of 
homogeneous congruence (cf.~\cite{IdzSlo01, AicIdz04, AicMud09}), 
and we describe the properties of homogeneous congruences in Mal'cev 
algebras with (SC1). 
Following \cite{AicMud09} we define:
\begin{definition}[{\cite[Definition~8.1]{AicMud09}}]\label{def:APMI}
A Mal'cev algebra $\ab{A}$ satisfies the condition (APMI) if 
for all strictly meet irreducible congruences $\alpha, \beta$ of 
$\ab{A}$ such that 
$\interval{\alpha}{\alpha^+}\leftrightsquigarrow\interval{\beta}{ \beta^+} $, we have  
$\alpha^+=\beta^+$. 
\end{definition}
The following proposition was first 
established for expanded groups in \cite{AicMud09}.
However the proof generalizes almost verbatim to 
Mal'cev algebras. 
\begin{proposition}\label{prop:SC1impliesAPMI}
Let $\ab{A}$ be a Mal'cev algebra. If $\ab{A}$ satisfies (SC1), 
then $\ab{A}$ satisfies (APMI).
\end{proposition}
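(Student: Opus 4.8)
The plan is to prove the contrapositive-free implication directly by assuming (SC1) and deriving (APMI). First I would take two strictly meet irreducible congruences $\alpha,\beta$ with $\interval{\alpha}{\alpha^+}\leftrightsquigarrow\interval{\beta}{\beta^+}$, and the goal is to force $\alpha^+=\beta^+$. The natural first step is to pass to the centralizers: by Lemma~\ref{lemma:exAic18Lemma3.4}\eqref{item:centralizerinprojectiveintervals}, projectivity of the two prime intervals gives $(\alpha:\alpha^+)=(\beta:\beta^+)$. Denote this common centralizer by $\eta$. The hypothesis (SC1) applied to the strictly meet irreducible congruence $\alpha$ yields $(\alpha:\alpha^+)\leq\alpha^+$, hence $\eta\leq\alpha^+$; symmetrically, applying (SC1) to $\beta$ gives $\eta\leq\beta^+$.

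Next I would record the reverse comparisons. Since $[\alpha^+,\alpha^+]\leq\alpha$ fails or holds depending on the type, I would instead use the general fact that $\alpha^+$ always centralizes itself modulo $\alpha$ over an abelian prime quotient; more robustly, one always has $\alpha^+\leq(\alpha:\alpha^+)=\eta$ when the quotient $\quotienttct{\alpha}{\alpha^+}$ is abelian, because $[\alpha^+,\alpha^+]\leq\alpha$ means $\alpha^+$ is contained in the largest congruence whose commutator with $\alpha^+$ lies below $\alpha$. By Lemma~\ref{lemma:exAic18Lemma3.4}\eqref{item:typesinprojactiveintervals}, the prime quotient $\quotienttct{\alpha}{\alpha^+}$ is abelian if and only if $\quotienttct{\beta}{\beta^+}$ is, so either both prime quotients are abelian or both are non-abelian, and the two cases must be handled separately. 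In the abelian case the inclusions $\alpha^+\leq\eta\leq\alpha^+$ and $\beta^+\leq\eta\leq\beta^+$ collapse to $\alpha^+=\eta=\beta^+$, which is exactly the desired conclusion.

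The remaining case is when the prime quotient is non-abelian. Here I expect the argument to be shorter: if $\quotienttct{\alpha}{\alpha^+}$ is non-abelian then $[\alpha^+,\alpha^+]\not\leq\alpha$, and for a non-abelian prime quotient of a strictly meet irreducible congruence in a Mal'cev (hence congruence modular) algebra, the centralizer $(\alpha:\alpha^+)$ cannot strictly exceed $\alpha$ in the relevant way; one shows $(\alpha:\alpha^+)=\alpha$, so that $\eta=\alpha$ and symmetrically $\eta=\beta$. Combined with $\eta\leq\alpha^+$ and $\eta\leq\beta^+$, this forces $\alpha=\eta=\beta$, whence $\alpha^+=\beta^+$ trivially. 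The main obstacle I anticipate is verifying the exact behaviour of the centralizer in the non-abelian case and justifying the inclusion $\alpha^+\leq(\alpha:\alpha^+)$ in the abelian case purely from the commutator-lattice structure supplied by Proposition~\ref{prop:commutator_lattice}; these are the two places where I would need to invoke the residuation properties of the commutator lattice carefully rather than relying on group-theoretic intuition, which is precisely where the generalization from expanded groups to Mal'cev algebras has to be checked. Since the statement asserts that the expanded-group proof of \cite{AicMud09} carries over almost verbatim, I would structure the write-up to mirror that proof, substituting commutator-lattice residuation for the group-theoretic centralizer computations at each step.
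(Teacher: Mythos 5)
Your proposal is correct and follows essentially the same route as the paper: equate the centralizers via Lemma~\ref{lemma:exAic18Lemma3.4}\eqref{item:centralizerinprojectiveintervals}, use (SC1) to pin $(\alpha:\alpha^+)$ down to $\{\alpha,\alpha^+\}$, and split into the abelian case (where both centralizers equal the respective unique covers, forcing $\alpha^+=\beta^+$) and the non-abelian case (where both equal $\alpha$ and $\beta$, forcing $\alpha=\beta$). The two verifications you flag as potential obstacles are resolved exactly as you anticipate: $\alpha\leq(\alpha:\alpha^+)$ always holds since $[\alpha,\alpha^+]\leq\alpha$, and in the non-abelian case $(\alpha:\alpha^+)\neq\alpha^+$ because $[\alpha^+,\alpha^+]\nleq\alpha$, so $\alpha\prec\alpha^+$ leaves only $(\alpha:\alpha^+)=\alpha$.
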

\begin{proof}
Let $\alpha, \beta$ be strictly meet irreducible in $\Con\ab{A}$ with
$\interval{\alpha}{\alpha^+}\leftrightsquigarrow \interval{\beta} {\beta^+}$.
Then, Lemma~\ref{lemma:exAic18Lemma3.4} yields that 
$(\alpha: \alpha^+)=(\beta:\beta^+)$, and (SC1) implies 
that $(\alpha: \alpha^+)\in\{\alpha,\alpha^+\}$. 
If $(\alpha:\alpha^+)=\alpha^+$, then 
$[\alpha^+, \alpha^+]\leq \alpha$, 
and therefore, Lemma~\ref{lemma:exAic18Lemma3.4} implies 
$[\beta^+, \beta^+]\leq \beta$. Thus $(\beta: \beta^+)\geq \beta^+$,
and thus, (SC1) implies that $(\beta:\beta^+)=\beta^+$. 
Hence $\beta^+=\alpha^+$. 
On the other hand, if $(\alpha: \alpha^+)=\alpha$, 
then $[\alpha^+, \alpha^+]\nleq \alpha$.
Thus, Lemma~\ref{lemma:exAic18Lemma3.4} implies that 
$[\beta^+, \beta^+]\nleq \beta$. Hence $(\beta:\beta^+)=\beta$,
and therefore, $\alpha=\beta$, 
which clearly implies that $\beta^+=\alpha^+$.  
\end{proof}
\begin{definition}[{\cite[Definition~7.2]{AicMud09}}]\label{def:homogeneous}
Let $\ab{L}$ be a bounded lattice with smallest element $0$.
An element $\mu$ of $L$ is 
\emph{homogeneous} if 
\begin{enumerate}
\item $\mu>0$,
\item for all $\alpha, \beta, \gamma, \delta\in L$ 
with $\alpha\prec \beta \leq \mu$ and $\gamma\prec \delta \leq \mu$, 
we have 
$\interval{\alpha}{\beta}\leftrightsquigarrow\interval{\gamma}{\delta}$, and
\item there are no $\alpha, \beta, \gamma,\delta\in L$ such that 
$\alpha\prec\beta\leq\mu\leq\gamma\prec\delta$, and
$\interval{\alpha}{\beta}\leftrightsquigarrow\interval{\gamma}{\delta}$. 
\end{enumerate}
\end{definition}
Following \cite{AicMud09}, for an element $\mu$ of a complete lattice $\ab{L}$
with smallest element $0$, we define 
\[
\begin{split}
\Phi(\mu)&:=\mu\wedge \bigwedge\{\alpha\in L\mid \alpha\prec \mu\};\\
\mu^*&:=\bigvee\{\alpha\in L\mid \alpha\wedge \mu=0\}.
\end{split}
\]
Note that if $\ab{L}$ has finite height
and $\mu$ is not $0$, then 
\[\Phi(\mu)=\bigwedge\{\alpha\in L\mid \alpha\prec \mu\}.\] 
\begin{lemma}\label{lemma:whymumeetmustariszero}
Let $\ab{L}$ be a modular lattice of finite height with
smallest element $0$, and let
$\mu$ be a homogeneous element of $\ab{L}$. Then $\mu\wedge\mu^*=0$.  
\end{lemma}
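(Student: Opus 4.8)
The plan is to argue by contradiction, producing a prime quotient below $\mu$ that is projective to a prime quotient whose bottom lies at or above $\mu$, in violation of condition~(3) of Definition~\ref{def:homogeneous}. First I would record the standard consequence of finite height that $\mu^*$ is already attained by a finite subjoin: since any strictly increasing chain of partial joins has length at most the height of $\ab{L}$, there are $\alpha_1,\dots,\alpha_k\in L$ with $\alpha_i\wedge\mu=0$ for all $i$ and $\mu^*=\alpha_1\vee\dots\vee\alpha_k$. Assuming $\mu\wedge\mu^*>0$, I would choose the least index $j$ with $\mu\wedge(\alpha_1\vee\dots\vee\alpha_j)>0$; since $\alpha_1\wedge\mu=0$ this forces $j\ge 2$, and writing $\tau:=\alpha_1\vee\dots\vee\alpha_{j-1}$ and $\alpha:=\alpha_j$, minimality of $j$ gives $\mu\wedge\tau=0$ and $\alpha\wedge\mu=0$, while $\rho:=\mu\wedge(\tau\vee\alpha)>0$.

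Then I would build the witnessing prime quotient in three transposition steps. Using finite height, pick an atom $p\le\rho$; as $p\le\rho\le\mu$ this gives a prime quotient $\interval{0}{p}$ lying below $\mu$. Since $\rho\wedge\tau=0$ we obtain $\interval{0}{p}\nearrow\interval{\tau}{\tau\vee p}$, a prime quotient inside $\interval{\tau}{\tau\vee\alpha}$ (note $p\le\rho\le\tau\vee\alpha$). Transposing this down along the standard perspectivity $\interval{\tau\wedge\alpha}{\alpha}\nearrow\interval{\tau}{\tau\vee\alpha}$ (whose inverse is $y\mapsto y\wedge\alpha$) yields a prime quotient $\interval{\tau\wedge\alpha}{c}$ with $c:=(\tau\vee p)\wedge\alpha\le\alpha$, projective to $\interval{0}{p}$. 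The purpose of this middle step is to relocate the witness so that it sits below a complement $\alpha$ of $\mu$.

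Finally I would transpose $\interval{\tau\wedge\alpha}{c}$ up by $\mu$. Because $c\le\alpha$ and $\alpha\wedge\mu=0$ we have $\mu\wedge c=0$, and the modular law gives $(\mu\vee(\tau\wedge\alpha))\wedge c=(\tau\wedge\alpha)\vee(\mu\wedge c)=\tau\wedge\alpha$; hence $\interval{\tau\wedge\alpha}{c}\nearrow\interval{\mu\vee(\tau\wedge\alpha)}{\mu\vee c}$, whose bottom $\mu\vee(\tau\wedge\alpha)$ is $\ge\mu$. Stringing the three steps together gives $\interval{0}{p}\leftrightsquigarrow\interval{\mu\vee(\tau\wedge\alpha)}{\mu\vee c}$ with $0\prec p\le\mu\le\mu\vee(\tau\wedge\alpha)\prec\mu\vee c$, which contradicts condition~(3) of homogeneity; therefore $\mu\wedge\mu^*=0$.

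I expect the crux to be precisely this last step. The naive transpositions coming from a complement $\alpha$ (for instance $\interval{0}{p}\nearrow\interval{\alpha}{\alpha\vee p}$, or the diamond between $\tau\vee\mu$ and $\tau\vee\alpha$) all land at intervals whose bottom is of the form $\alpha$ or $\tau\vee\alpha$, which need not exceed $\mu$ and so are harmless for condition~(3). The essential trick is to first drag the witnessing prime quotient \emph{down} so that it lies below the complement $\alpha$, and only then push it \emph{up} by $\mu$; the hypothesis $\alpha\wedge\mu=0$ is exactly what lifts the base of the target interval above $\mu$. Note that condition~(2) of homogeneity is not needed for this direction.
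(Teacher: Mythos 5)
Your proof is correct, but it takes a genuinely different route from the paper. The paper's proof is essentially a two-line reduction to external results: it writes $\mu^*$ as a finite join $\alpha_1\vee\dots\vee\alpha_n$ with $\alpha_i\wedge\mu=0$ (as you do), then invokes \cite[Lemma~7.3]{AicMud09} to conclude that a homogeneous element of an algebraic modular lattice is a \emph{distributive} element, hence dually distributive by \cite[p.~187, Theorem~6]{Gra98}, so that $\mu\wedge\mu^*=\bigvee_i(\mu\wedge\alpha_i)=0$. Your argument instead works directly from condition~(3) of Definition~\ref{def:homogeneous}: assuming $\mu\wedge\mu^*>0$, you locate the first partial join $\tau\vee\alpha$ that meets $\mu$ nontrivially and manufacture a projectivity $\interval{0}{p}\leftrightsquigarrow\interval{\mu\vee(\tau\wedge\alpha)}{\mu\vee c}$ between a prime quotient below $\mu$ and one whose bottom is above $\mu$. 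I checked the three transpositions: $p\wedge\tau=0$ gives $\interval{0}{p}\nearrow\interval{\tau}{\tau\vee p}$; the Dedekind isomorphism $y\mapsto y\wedge\alpha$ between $\interval{\tau}{\tau\vee\alpha}$ and $\interval{\tau\wedge\alpha}{\alpha}$ gives $\interval{\tau}{\tau\vee p}\searrow\interval{\tau\wedge\alpha}{c}$ with $\tau\wedge\alpha\prec c\leq\alpha$; and $\mu\wedge c\leq\mu\wedge\alpha=0$ together with the modular law gives the final upward transposition. All steps are valid in a modular lattice, and prime quotients are preserved under transposition, so the contradiction with condition~(3) is genuine. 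What each approach buys: the paper's proof is shorter but leans on the (nontrivial) fact that homogeneous elements are distributive, which in turn is proved elsewhere; yours is self-contained modulo standard modular-lattice facts, and it isolates exactly which part of homogeneity is responsible for the complementation property --- only condition~(3), as you note, with condition~(2) playing no role. The observation that one must first drag the witness \emph{below} the complement $\alpha$ before pushing it up by $\mu$ is indeed the crux, and you execute it correctly.
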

\begin{proof}
Since $\ab{L}$ is of finite height there is a finite
subset $\{\alpha_1,\dots, \alpha_n\}$ of $L$ such that
$\mu^*=\alpha_1\vee\dots \vee \alpha_n$ and 
$\alpha_i\wedge\mu=0$
for all $i\in\finset{n}$. Since $\ab{L}$ is algebraic,
\cite[Lemma~7.3]{AicMud09} yields that $\mu$ is 
a distributive element of $\ab{L}$, 
and therefore, dually distributive by 
\cite[p.~187, Theorem~6]{Gra98}.
Thus, $\mu\wedge \mu^*=0$. 
\end{proof}
\begin{proposition}\label{prop:homogeneueselementsplittingwithcentralizer}
Let $\ab{A}$ be a Mal'cev algebra whose congruence lattice has 
finite height, and let $\mu$ be a homogeneous congruence
of $\ab{A}$. Then for all $\alpha\in \Con\ab{A}$ we have: 
$\alpha\geq \mu$ or $\alpha\leq (\Phi(\mu):\mu)$.
\end{proposition}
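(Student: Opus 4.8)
The plan is to reduce the dichotomy to the behaviour of the centralizers of the prime quotients lying directly below $\mu$, and then to split according to whether those quotients are abelian. Since $\Con\ab A$ has finite height and $\mu>\bottom A$, we have $\Phi(\mu)=\bigwedge\{\nu\in\Con\ab A\mid\nu\prec\mu\}$, and by Proposition~\ref{prop:commutator_lattice} the centralizer $(\,\cdot:\mu)$ is the residuation of a commutator lattice, hence the right adjoint of $[\,\cdot,\mu]$ and thus meet-preserving; therefore $(\Phi(\mu):\mu)=\bigwedge_{\nu\prec\mu}(\nu:\mu)$. Homogeneity of $\mu$ (Definition~\ref{def:homogeneous}(2)) gives $\interval{\nu}{\mu}\leftrightsquigarrow\interval{\nu'}{\mu}$ for any two lower covers $\nu,\nu'$ of $\mu$, so Lemma~\ref{lemma:exAic18Lemma3.4}(1) shows that the value $c:=(\nu:\mu)$ does not depend on $\nu$, whence $(\Phi(\mu):\mu)=c$. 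As $[\alpha,\mu]\le\alpha\wedge\mu$ for every $\alpha$, the assertion is equivalent to: \emph{if $\alpha\not\ge\mu$ then $[\alpha,\mu]\le\Phi(\mu)$}, i.e. $\alpha\le c$; this reformulation is what I would prove.

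By Lemma~\ref{lemma:exAic18Lemma3.4}(2), applied to the projective quotients $\interval{\nu}{\mu}$, either $[\mu,\mu]\le\nu$ for every lower cover $\nu$ (equivalently $[\mu,\mu]\le\Phi(\mu)$) or for none. In the non-abelian case $[\mu,\mu]\not\le\nu$, so $\mu\not\le(\nu:\mu)=c$, while $\nu\le(\nu:\mu)$ always holds because $[\nu,\mu]\le\nu\wedge\mu=\nu$; together these force $c\wedge\mu=\nu$ for \emph{every} lower cover $\nu$. Since $c$ is independent of $\nu$, there is a unique lower cover, so $\Phi(\mu)=\nu$. Then any $\alpha$ with $\alpha\not\ge\mu$ satisfies $\alpha\wedge\mu<\mu$, whence $\alpha\wedge\mu\le\nu=\Phi(\mu)$, and $[\alpha,\mu]\le\alpha\wedge\mu\le\Phi(\mu)$, as required.

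It remains to treat the abelian case $[\mu,\mu]\le\Phi(\mu)$, where $\interval{\Phi(\mu)}{\mu}$ is a simple complemented modular lattice whose prime quotients are all abelian. Each atom $\theta$ of $\interval{\Phi(\mu)}{\mu}$ gives a prime quotient $\interval{\Phi(\mu)}{\theta}$ below $\mu$, so Lemma~\ref{lemma:exAic18Lemma3.4}(1) yields $(\Phi(\mu):\theta)=c$; hence $[\alpha,\theta]\le\Phi(\mu)$ holds for one atom iff it holds for all, iff $\alpha\le c$. If $\alpha\le c$ we are done, since by additivity of the commutator $[\alpha,\mu]=\bigvee_{\theta}[\alpha,\theta]\le\Phi(\mu)$, the join ranging over the atoms, whose join is $\mu$. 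Otherwise $[\alpha,\theta]\not\le\Phi(\mu)$ for every atom $\theta$; as $\Phi(\mu)\prec\theta$ and $[\alpha,\theta]\le\alpha$, this gives $\theta=[\alpha,\theta]\vee\Phi(\mu)\le\alpha\vee\Phi(\mu)$, so $\mu=\bigvee_\theta\theta\le\alpha\vee\Phi(\mu)$. Because a homogeneous element is distributive, hence dually distributive (\cite[Lemma~7.3]{AicMud09} together with \cite[p.~187, Theorem~6]{Gra98}, as in the proof of Lemma~\ref{lemma:whymumeetmustariszero}), this yields $(\alpha\wedge\mu)\vee\Phi(\mu)=\mu\wedge(\alpha\vee\Phi(\mu))=\mu$.

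The hard part is the final step of the abelian case: upgrading $(\alpha\wedge\mu)\vee\Phi(\mu)=\mu$ to $\alpha\wedge\mu=\mu$, i.e. $\alpha\ge\mu$, so as to contradict $\alpha\not\ge\mu$. Pure lattice theory does not suffice, since $\alpha\wedge\mu$ could a priori be a proper complement of $\Phi(\mu)$ inside $\interval{\bottom A}{\mu}$. To close this gap I would invoke the coordinatization of Section~\ref{sec:coord_simple-comple_mod}: applying Proposition~\ref{prop:identification_with_matrices} to $\interval{\Phi(\mu)}{\mu}$ realizes $\mu/\Phi(\mu)$ as a faithful, isotypic module over a matrix ring $\ab{M}_n(\ab{D})$ with $\ab{D}$ a field, and one shows that a congruence whose meet with $\mu$ were a proper complement of $\Phi(\mu)$ would centralize $\mu$ modulo $\Phi(\mu)$, i.e. satisfy $\alpha\le c$, contrary to the case assumption. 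This module-theoretic translation—turning $[\alpha,\mu]\not\le\Phi(\mu)$ into the statement that the relevant scalar action over $\ab{D}$ is invertible, and hence forces $\mu\le\alpha$ rather than merely $\alpha\vee\Phi(\mu)\ge\mu$—is the step I expect to require the most care.
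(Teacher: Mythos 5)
Your first paragraph is correct and already contains everything that is needed, but you then abandon the direct route and the argument you substitute for it has a genuine gap. The decisive observation is the one you state and then do not use: for any $\alpha\in\Con\ab{A}$ we have $[\alpha,\mu]\leq\alpha\wedge\mu$, so if $\alpha\ngeq\mu$ then $[\alpha,\mu]\leq\alpha\wedge\mu<\mu$, and since $\Con\ab{A}$ has finite height there is a lower cover $\eta\prec\mu$ with $[\alpha,\mu]\leq\eta$, whence $\alpha\leq(\eta:\mu)$. By the independence of $(\eta:\mu)$ from the choice of the lower cover, which you proved in your first paragraph via homogeneity and Lemma~\ref{lemma:exAic18Lemma3.4}\eqref{item:centralizerinprojectiveintervals}, this equals $(\Phi(\mu):\mu)$, and the proposition is proved. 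This is exactly the paper's proof; no case split on $[\mu,\mu]$ is needed, and your non-abelian case, while correct, is a detour through the same facts.

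The case analysis you perform instead does not close. In the abelian case you reach $(\alpha\wedge\mu)\vee\Phi(\mu)=\mu$ and correctly identify that upgrading this to $\alpha\geq\mu$ is the crux, but you only gesture at a module-theoretic argument (``one shows that \dots would centralize $\mu$ modulo $\Phi(\mu)$'') without supplying it; as written, the claimed contradiction with $\alpha\nleq c$ is asserted, not derived, so the abelian case is unproven. Two further points would also need justification on your route: that $\interval{\Phi(\mu)}{\mu}$ is a \emph{complemented} (and simple) modular lattice is not available from the paper without (APMI), which this proposition does not assume (Proposition~\ref{prop:homogenuityandcolpementation} establishes such structure only for $\interval{\bottom{A}}{\mu}$ under (APMI)); and Proposition~\ref{prop:identification_with_matrices} is stated for $\interval{\bottom{A}}{\mu}$, so invoking it for $\interval{\Phi(\mu)}{\mu}$ would require passing to $\ab{A}/\Phi(\mu)$ and re-verifying its hypotheses there. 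All of this machinery is avoidable by the one-line argument above.
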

\begin{proof}
Let $\alpha\in\Con\ab{A}$ with $\alpha\ngeq \mu$. We show that $\alpha\leq (\Phi(\mu):\mu)$. 
We have that $[\mu,\alpha]< \mu$. Thus, there exists $\eta\in \Con\ab{A}$ such
that $[\mu, \alpha]\leq \eta\prec\mu$. Thus, $\alpha\leq(\eta:\mu)$. 
Next, we show that $(\eta:\mu)=(\Phi(\mu):\mu)$. 
Since $\eta\geq \Phi(\mu)$, we have $(\eta:\mu)\geq (\Phi(\mu):\mu)$.
Next we show  the opposite inequality. To this end,
we first prove that each subcover $\eta'$ of $\mu$ satisfies 
$[(\eta: \mu), \mu]\leq \eta'$.
Since $\mu$ is homogeneous, we have that
$\interval{\eta}{\mu}\leftrightsquigarrow
\interval{\eta'}{\mu }$; then Lemma~\ref{lemma:exAic18Lemma3.4} yields 
that $(\eta:\mu)=(\eta':\mu)$. This concludes the proof that 
$[(\eta:\mu), \mu]$ is less than or equal to each subcover of $\mu$. 
Thus, $[(\eta:\mu),\mu]\leq\Phi(\mu)$, and therefore,
$(\eta:\mu)\leq(\Phi(\mu):\mu)$. 
\end{proof}
The results from \cite[Section~3]{Aic18} allow us to extend 
\cite[Proposition~9.6]{AicMud09}
and \cite[Proposition~10.1(2)]{AicMud09}
from finite expanded groups
to finite Mal'cev algebras. 
\begin{proposition}\label{prop:typesandhomogenuity}
Let $\ab{A}$ be a Mal'cev algebra whose congruence 
lattice has finite height, let $\mu$ be a homogeneous 
element of $ \Con\ab{A}$, and let $\alpha,\beta\in \Con\ab{A}$ 
with $\alpha\prec\beta\leq\mu$. Then we have:
\begin{enumerate}
\item If $[\beta,\beta]\nleq\alpha$, then 
$\alpha=\bottom{A}$, $\beta=\mu$, and $(\bottom{A}:\mu)=\mu^*$. 
\label{item:typethreecasehomogeneous}
\item If $[\beta,\beta]\leq\alpha$, then
$(\alpha :\beta)=(\Phi(\mu):\mu)\geq \mu\vee\mu^*$. \label{item:typetwocasehomogeneous}
\end{enumerate}
\end{proposition}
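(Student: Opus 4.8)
The plan is to use the projectivity built into homogeneity, together with Lemma~\ref{lemma:exAic18Lemma3.4}, to transfer both the abelianness status and the centralizer from the given prime quotient $\interval{\alpha}{\beta}$ to every other prime quotient below $\mu$. Indeed, by Definition~\ref{def:homogeneous}(2) every prime quotient $\interval{\gamma}{\delta}$ with $\gamma\prec\delta\leq\mu$ satisfies $\interval{\alpha}{\beta}\leftrightsquigarrow\interval{\gamma}{\delta}$, so Lemma~\ref{lemma:exAic18Lemma3.4}(2) gives $[\beta,\beta]\leq\alpha$ iff $[\delta,\delta]\leq\gamma$, and Lemma~\ref{lemma:exAic18Lemma3.4}(1) gives $(\alpha:\beta)=(\gamma:\delta)$. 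Thus in case~\ref{item:typethreecasehomogeneous} \emph{every} prime quotient below $\mu$ is nonabelian, in case~\ref{item:typetwocasehomogeneous} every prime quotient below $\mu$ is abelian, and in either case they all share one centralizer $Z:=(\alpha:\beta)$. Throughout I would freely use that on $\Con\ab{A}$ the commutator is monotone, join-additive and bounded above by the meet, and that $(\cdot:\cdot)$ is its residual (Proposition~\ref{prop:commutator_lattice}).

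For part~\ref{item:typetwocasehomogeneous} I would first identify $Z$ with $(\Phi(\mu):\mu)$. Since $\Con\ab{A}$ has finite height and $\mu\neq\bottom{A}$, we have $\Phi(\mu)=\bigwedge\{\eta\mid\eta\prec\mu\}$. Each coatom $\eta\prec\mu$ yields a prime quotient $\interval{\eta}{\mu}$ below $\mu$, which is abelian, so $[\mu,\mu]\leq\eta$; meeting over all coatoms gives $[\mu,\mu]\leq\Phi(\mu)$, i.e. $\mu\leq(\Phi(\mu):\mu)$. Also $[\mu^*,\mu]\leq\mu^*\wedge\mu=\bottom{A}\leq\Phi(\mu)$ by Lemma~\ref{lemma:whymumeetmustariszero}, so $\mu^*\leq(\Phi(\mu):\mu)$, whence $\mu\vee\mu^*\leq(\Phi(\mu):\mu)$. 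Finally, $[Z,\mu]\leq\eta$ for every coatom $\eta$ gives $[Z,\mu]\leq\Phi(\mu)$, hence $Z\leq(\Phi(\mu):\mu)$; conversely $[(\Phi(\mu):\mu),\mu]\leq\Phi(\mu)\leq\eta$ gives $(\Phi(\mu):\mu)\leq(\eta:\mu)=Z$, using that $(\eta:\mu)=(\alpha:\beta)=Z$ by the first paragraph. Therefore $(\alpha:\beta)=Z=(\Phi(\mu):\mu)\geq\mu\vee\mu^*$.

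For part~\ref{item:typethreecasehomogeneous}, now all prime quotients below $\mu$ are nonabelian. The identity $(\bottom{A}:\mu)=\mu^*$ splits into two inclusions. As above, $[\mu^*,\mu]\leq\mu^*\wedge\mu=\bottom{A}$ gives $\mu^*\leq(\bottom{A}:\mu)$. For the reverse, set $Z_0:=(\bottom{A}:\mu)$, so $[Z_0,\mu]=\bottom{A}$; if $Z_0\wedge\mu>\bottom{A}$ it contains an atom $\rho\leq\mu$, and $[\rho,\rho]\leq[Z_0,\mu]=\bottom{A}$ would make $\interval{\bottom{A}}{\rho}$ abelian, a contradiction, so $Z_0\wedge\mu=\bottom{A}$ and $Z_0\leq\mu^*$. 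It remains to show $\mu$ is an atom. Suppose $\mu$ has height at least $2$ and take the first two links $\bottom{A}\prec x\prec y\leq\mu$ of a maximal chain in $\interval{\bottom{A}}{\mu}$. Then $\interval{\bottom{A}}{x}$ and $\interval{x}{y}$ are projective, nonabelian prime quotients below $\mu$, so by Lemma~\ref{lemma:exAic18Lemma3.4}(1) they have a common centralizer $C:=(\bottom{A}:x)=(x:y)$. From $C=(x:y)$ we get $[C,y]\leq x$, and $[x,y]\leq x\wedge y=x$, so join-additivity yields $[C\vee x,y]=[C,y]\vee[x,y]\leq x$; hence $C\vee x\leq(x:y)=C$, giving $x\leq C=(\bottom{A}:x)$ and thus $[x,x]\leq[C,x]=\bottom{A}$, contradicting the nonabelianness of $\interval{\bottom{A}}{x}$. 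So $\mu$ is an atom and $\alpha=\bottom{A}$, $\beta=\mu$.

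I expect the main obstacle to be exactly this last step: excluding a nonabelian prime quotient strictly inside $\mu$. Homogeneity supplies only projectivity, and hence (through Lemma~\ref{lemma:exAic18Lemma3.4}) equality of centralizers; the contradiction must then be wrung out of the interaction of the two bottom prime quotients via the residuation identity $C=(x:y)$. The computation $[C\vee x,y]\leq x$ is the decisive device, and it is here that the Mal'cev (congruence-permutable) hypothesis enters, through the good behaviour of the commutator guaranteed by Proposition~\ref{prop:commutator_lattice}.
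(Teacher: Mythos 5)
Your proof is correct and follows essentially the same route as the paper: homogeneity makes every prime quotient below $\mu$ projective to $\interval{\alpha}{\beta}$, Lemma~\ref{lemma:exAic18Lemma3.4} transfers abelianness and centralizers among them, part~(2) is obtained by meeting $(\eta:\mu)$ over the subcovers $\eta$ of $\mu$ together with $[\mu,\mu^*]\leq\mu\wedge\mu^*=\bottom{A}$, and the atomicity of $\mu$ in part~(1) comes from the fact that a prime quotient projective to one lying entirely above it must be abelian (the paper's claim~\eqref{eq:projectivity_implies_type_2}). One cosmetic remark: your computation $[C\vee x,y]\leq x$ is more elaborate than necessary, since $[x,y]\leq x\wedge y=x$ already gives $x\leq(x:y)=C=(\bottom{A}:x)$ directly, which is precisely how the paper argues.
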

\begin{proof}
We first prove the following:
\begin{equation}\label{eq:projectivity_implies_type_2}
\begin{split}
&\forall \kappa_1, \kappa_2, \kappa_3, \kappa_4\in \Con\ab{A}\colon \\
&\bigl(\kappa_1\prec\kappa_2\leq \kappa_3\prec\kappa_4 \,\wedge\, 
\interval{\kappa_1}{\kappa_2}\leftrightsquigarrow \interval{\kappa_3}{\kappa_4}\bigr)
\Rightarrow [\kappa_2,\kappa_2]\leq\kappa_1.
\end{split}
\end{equation}
We have that $(\kappa_3:\kappa_4)\geq \kappa_3$ and 
Lemma~\ref{lemma:exAic18Lemma3.4}  
yields that $(\kappa_1\colon \kappa_2)\geq \kappa_3$.
Hence the monotonicity of the commutator implies 
$[\kappa_2,\kappa_2]\leq [\kappa_3, \kappa_2]\leq \kappa_1$. 

In order to prove \eqref{item:typethreecasehomogeneous}
we first show that if $\alpha\neq \bottom{A}$, then $[\beta,\beta]\leq\alpha$.
If $\alpha\neq \bottom{A}$, then there exist 
$\eta_1, \eta_2\in \Con\ab{A}$
such that $\eta_1\prec\eta_2\leq\alpha\prec\beta\leq\mu$. 
Since $\mu$ is homogeneous,  \eqref{eq:projectivity_implies_type_2}
yields that $[\eta_2,\eta_2]\leq\eta_1$, and
Lemma~\ref{lemma:exAic18Lemma3.4} implies
that $[\beta,\beta]\leq\alpha$. 
Next, we show that if $\beta\neq \mu$, then $[\beta,\beta]\leq\alpha$.
If $\beta\neq \mu$, then there exist $\eta_1, \eta_2\in \Con\ab{A}$ 
such that $\alpha\prec\beta\leq \eta_1\prec\eta_2\leq\mu$. 
Hence \eqref{eq:projectivity_implies_type_2}
yields that $[\beta,\beta]\leq\alpha$. 
Since $[\beta,\beta]\nleq\alpha$ implies $\alpha=\bottom{A}$ 
and $\beta=\mu$, we infer that $[\beta,\beta]\nleq\alpha$
also implies that for all $\kappa\in\Con\ab{A}$, we have 
\[[\kappa, \mu]=\bottom{A}\Leftrightarrow\kappa\wedge \mu=\bottom{A},\] 
and therefore, $(\bottom{A}:\mu)=\mu^*$. 
This concludes the proof of \eqref{item:typethreecasehomogeneous}.

Next, we show \eqref{item:typetwocasehomogeneous}. To this end, 
let $\eta\prec\mu$. Since $\mu$ is homogeneous, 
we have that $\interval{\alpha}{\beta}\leftrightsquigarrow\interval{\eta}{\mu}$.
Thus, since $[\beta,\beta]\leq\alpha$, we have $[\mu, \mu]\leq \eta$. 
Hence 
$[\mu, \mu]\leq \Phi(\mu)$. Moreover, \cite[Lemma~4.149]{McKMcnTay88} and 
Lemma \ref{lemma:whymumeetmustariszero}
yield that $[\mu, \mu^*]\leq \mu\wedge \mu^*=\bottom{A}$.
Thus, \cite[Lemma~2.5]{Aic06} implies that $[\mu, \mu\vee\mu^*]\leq \Phi(\mu)$,
and therefore, $(\Phi(\mu):\mu)\geq \mu\vee \mu^*$. 
Next, we prove that $(\alpha:\beta)=(\Phi(\mu):\mu)$. 
We observe that since $\mu\neq \bottom{A}$ and since 
$\Con\ab{A}$ has finite height, 
$\Phi(\mu)=\bigwedge\{\eta\mid \eta\prec\mu\}$ and 
therefore,
$(\Phi(\mu):\mu)=\bigwedge\{(\eta:\mu)\mid \eta\prec\mu\}$. 
Since $\mu$ is homogeneous, Lemma~\ref{lemma:exAic18Lemma3.4}
implies that for each subcover $\eta$ of $\mu$ 
we have $(\eta:\mu)=(\alpha:\beta)$. Hence
$(\Phi(\mu):\mu)=(\alpha:\beta)$.
\end{proof}
\begin{proposition}\label{prop:homogenuityandcolpementation}
Let $\ab{A}$ be a Mal'cev algebra with (APMI)
whose congruence lattice has finite height. Then 
$\Con \ab{A}$ has a homogeneous element.
Moreover, let $\mu$ be an homogeneous element of $\Con\ab{A}$.
Then we have:
\begin{enumerate}
\item $\interval{\bottom{A}}{\mu}$ is 
a simple complemented modular lattice;\label{item:simplecomplmodlatticebelowhom}
\item $\Phi(\mu)=\bottom{A}$;\label{item:phiofhomogenous}
\item for all $\alpha\in \Con\ab{A}$ we have $\alpha\geq \mu$ or $\alpha\leq \mu\vee \mu^*$. \label{item:splithomogeous}
\end{enumerate}
\end{proposition}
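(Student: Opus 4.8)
The plan is to establish the existence of a homogeneous element first and then to prove assertions (1), (2), (3) for a fixed homogeneous $\mu$ in that order, since each relies on the previous. For existence I would fix an atom $a$ of $\Con\ab{A}$ (available as $\Con\ab{A}$ has finite height) and set $\mu:=\bigvee\{b\mid b\text{ an atom of }\Con\ab{A},\ \interval{\bottom{A}}{b}\leftrightsquigarrow\interval{\bottom{A}}{a}\}$, the join of the projectivity class of $\interval{\bottom{A}}{a}$. This is the natural isotypic socle component and reproduces the construction of \cite[Proposition~9.6 and Proposition~10.1]{AicMud09}. Condition (1) of Definition~\ref{def:homogeneous} is immediate; condition (2) follows from the theory of socles in modular lattices of finite height, since $\mu$ is a join of atoms from a single projectivity class and hence all prime quotients of $\interval{\bottom{A}}{\mu}$ lie in the class of $\interval{\bottom{A}}{a}$. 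The delicate point is condition (3), and this is where (APMI) enters: given $\alpha\prec\beta\leq\mu\leq\gamma\prec\delta$ with $\interval{\alpha}{\beta}\leftrightsquigarrow\interval{\gamma}{\delta}$, condition (2) places $\interval{\gamma}{\delta}$ into the class of $\interval{\bottom{A}}{a}$; lifting both $\interval{\gamma}{\delta}$ and $\interval{\bottom{A}}{a}$ to prime quotients of strictly meet irreducible congruences and applying (APMI) pins their covers to a common congruence, contradicting the maximality of the isotypic component.

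Now fix a homogeneous $\mu$. For (1), modularity of $\interval{\bottom{A}}{\mu}$ is automatic, while condition (2) of homogeneity asserts precisely that all prime quotients of $\interval{\bottom{A}}{\mu}$ are projective; this is exactly the lattice-theoretic characterization of a projective geometry, so I would invoke \cite[Section~4.8]{McKMcnTay88}: all prime quotients projective yields simplicity, and simplicity with modularity and finite height forces $\mu$ to be a join of atoms, hence complementation. Part (2) is then the standard fact dual to \cite[Lemma~4.83]{McKMcnTay88}, namely that the meet of all coatoms of a complemented modular lattice is its least element, giving $\Phi(\mu)=\bigwedge\{\alpha\mid\alpha\prec\mu\}=\bottom{A}$.

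For (3) I would split according to Proposition~\ref{prop:typesandhomogenuity}. If some prime quotient $\alpha\prec\beta\leq\mu$ is non-abelian, then by Proposition~\ref{prop:typesandhomogenuity}(\ref{item:typethreecasehomogeneous}) $\mu$ is an atom with $(\bottom{A}:\mu)=\mu^{*}$, so for $\alpha\not\geq\mu$ we have $\alpha\wedge\mu=\bottom{A}$, and Proposition~\ref{prop:homogeneueselementsplittingwithcentralizer} together with part (2) gives $\alpha\leq(\Phi(\mu):\mu)=\mu^{*}\leq\mu\vee\mu^{*}$; this case is clean. The remaining case is $\mu$ abelian. Here the easy sub-cases are immediate: if $\alpha\leq\mu$ then $\alpha\leq\mu\vee\mu^{*}$ trivially, and if $\alpha\wedge\mu=\bottom{A}$ then $\alpha\leq\mu^{*}$ straight from the definition of $\mu^{*}$. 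Using that $\mu$ is a distributive (indeed neutral) element, established inside the proof of Lemma~\ref{lemma:whymumeetmustariszero} via \cite[Lemma~7.3]{AicMud09}, together with $[\mu,\mu^{*}]\leq\mu\wedge\mu^{*}=\bottom{A}$, I would pass to the product decomposition $\interval{\bottom{A}}{\mu\vee\mu^{*}}\cong\interval{\bottom{A}}{\mu}\times\interval{\bottom{A}}{\mu^{*}}$ and reduce the inequality $\alpha\leq\mu\vee\mu^{*}$ to the existence of a complement of $c:=\alpha\wedge\mu$ in $\interval{\bottom{A}}{\alpha}$; such a complement meets $\mu$ trivially, hence lies under $\mu^{*}$, and yields $\alpha=c\vee(\text{complement})\leq\mu\vee\mu^{*}$.

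The hard part will be exactly this abelian case with $\bottom{A}<\alpha\wedge\mu<\mu$ and $\alpha\not\leq\mu$, where purely lattice-theoretic input is insufficient: the existence of the required complement is, in a general modular lattice, equivalent to the statement being proved (the three-element chain shows neutrality and complementation of $\interval{\bottom{A}}{\mu}$ do not by themselves suffice). I expect to close the gap using the commutator and tame-congruence content that is special to an abelian homogeneous congruence. Concretely, passing to a minimal counterexample $\alpha$ one shows $\alpha$ covers $c=\alpha\wedge\mu$, and I would invoke the coordinatization of Section~\ref{sec:coord_simple-comple_mod} (Proposition~\ref{prop:identification_with_matrices}), which realizes $\interval{\bottom{A}}{\mu}$ as a semisimple module; the inequality $[\alpha,\mu]\leq\alpha\wedge\mu$, combined with Lemma~\ref{lemma:exAic18Lemma3.4} and homogeneity condition (3), should force the prime quotient $\interval{c}{\alpha}$ to be projective to a prime quotient inside $\mu^{*}$, producing the complement and hence the contradiction. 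This reduction of the abelian case to a module-splitting argument is the crux of the whole proof.
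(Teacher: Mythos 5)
The paper's own proof of this proposition is a two-line citation: after observing that $\Con\ab{A}$ is modular, it invokes \cite[Propositions~8.9 and~8.10]{AicMud09}, which are lattice-theoretic statements about modular lattices of finite height satisfying (APMI). Your reconstruction of the existence claim and of items (1) and (2) is in the right spirit and essentially retraces what those cited propositions do (join of a projectivity class of atoms, (APMI) to exclude projectivities crossing $\mu$, standard complemented modular lattice theory); the step where (APMI) ``contradicts the maximality of the isotypic component'' is left vague, but the plan is workable.

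The genuine gap is in item (3). Your premise that ``purely lattice-theoretic input is insufficient'' in the abelian case is mistaken: the dichotomy ``$\alpha\geq\mu$ or $\alpha\leq\mu\vee\mu^*$'' is precisely \cite[Proposition~8.10]{AicMud09}, a consequence of homogeneity in a modular lattice of finite height with no commutator input at all; indeed the distinction between abelian and non-abelian $\mu$ is not even expressible in the lattice $\Con\ab{A}$, so a correct proof should not hinge on it. More importantly, the case you yourself identify as the crux ($\mu$ abelian, $\bottom{A}<\alpha\wedge\mu<\mu$, $\alpha\not\leq\mu$) is not proved: you write ``I expect to close the gap'' and ``should force,'' and the mechanism you sketch does not obviously work --- knowing that the prime quotient $\interval{c}{\alpha}$ is \emph{projective} to a prime quotient inside $\interval{\bottom{A}}{\mu^*}$ does not produce a complement $c'$ of $c$ in $\interval{\bottom{A}}{\alpha}$ with $c'\wedge\mu=\bottom{A}$ (projectivity is far weaker than transposition), and no argument is given for how the coordinatization of Section~4, which lives entirely inside a single $\mu$-class, controls congruences that are not below $\mu$. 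A correct argument stays inside the lattice: for instance, take $\beta\leq\alpha$ minimal with $\beta\not\leq\mu\vee\mu^*$, so that $\beta$ is join irreducible with lower cover $\beta^-\leq\mu\vee\mu^*$ and $\interval{\beta^-}{\beta}$ transposes up to a prime quotient with bottom above $\mu$, while $\beta\not\leq\mu^*$ forces $\beta\wedge\mu>\bottom{A}$; one then uses the distributivity of $\mu$ (\cite[Lemma~7.3]{AicMud09}) and condition (3) of Definition~\ref{def:homogeneous} to reach a contradiction. As written, your item (3) is a plan rather than a proof, and it is built on a misdiagnosis of where the difficulty lies.
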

\begin{proof}
First, we remark that $\Con\ab{A}$ is modular. 
Thus, \cite[Proposition~8.9]{AicMud09} yields that there exists
an homogeneous congruence. 
Items \eqref{item:simplecomplmodlatticebelowhom} 
and \eqref{item:phiofhomogenous} follow from
\cite[Proposition~8.9]{AicMud09}; 
item
\eqref{item:splithomogeous} follows from
\cite[Proposition~8.10]{AicMud09}. 
\end{proof}

\begin{proposition}\label{prop:the_centralizer_of_a_homogeneous_congruence}
Let $\ab{A}$ be a finite Mal'cev algebra with (SC1), and let 
$\mu$ be a homogeneous element of $\Con\ab{A}$. Then
$\Phi(\mu)=\bottom{A}$ and $(\Phi(\mu):\mu)\leq \mu\vee \mu^*$.  
\end{proposition}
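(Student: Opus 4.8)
The plan is to dispose of the first assertion immediately from the machinery already in place, and then to reduce the second assertion to a single application of (SC1) at a carefully chosen strictly meet irreducible congruence. First I would record the ambient structure: since $\ab{A}$ is finite, $\Con\ab{A}$ is a modular lattice of finite height, and by Proposition~\ref{prop:SC1impliesAPMI} the hypothesis (SC1) yields (APMI). Hence Proposition~\ref{prop:homogenuityandcolpementation} applies to the homogeneous congruence $\mu$ and delivers at once that $\Phi(\mu)=\bottom{A}$ (which is the first claim), that $\interval{\bottom{A}}{\mu}$ is a simple complemented modular lattice, and the splitting property that every $\alpha\in\Con\ab{A}$ satisfies $\alpha\geq\mu$ or $\alpha\leq\mu\vee\mu^*$. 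Because $\Phi(\mu)=\bottom{A}$, it then remains to prove $(\bottom{A}:\mu)\leq\mu\vee\mu^*$.

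The heart of the argument is a construction. By Lemma~\ref{lemma:whymumeetmustariszero} we have $\mu\wedge\mu^*=\bottom{A}$, and since $\mu\nleq\mu^*$ this gives the transposition $\interval{\bottom{A}}{\mu}\nearrow\interval{\mu^*}{\mu\vee\mu^*}$ together with $\mu^*<\mu\vee\mu^*$. I would then pick a coatom $\nu$ of the interval $\interval{\mu^*}{\mu\vee\mu^*}$, so that $\mu^*\leq\nu\prec\mu\vee\mu^*$ (and $\nu\prec\mu\vee\mu^*$ holds in all of $\Con\ab{A}$, as any congruence strictly between $\nu$ and $\mu\vee\mu^*$ would lie in the interval). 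The crucial and most delicate step is to show that $\nu$ is strictly meet irreducible in the whole lattice $\Con\ab{A}$ with $\nu^+=\mu\vee\mu^*$. For this I take an arbitrary upper cover $\rho$ of $\nu$ and invoke the splitting property: if $\rho\geq\mu$ then $\rho\geq\mu\vee\mu^*$ because $\rho\geq\nu\geq\mu^*$, whereas if $\rho\ngeq\mu$ then $\rho\leq\mu\vee\mu^*$; in either case, since $\nu\prec\mu\vee\mu^*$ and $\rho$ covers $\nu$, the interval $(\nu,\mu\vee\mu^*)$ is empty and we are forced to conclude $\rho=\mu\vee\mu^*$. Thus $\mu\vee\mu^*$ is the unique upper cover of $\nu$.

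To finish I set $\alpha:=\nu\wedge\mu$ and $\beta:=\mu$. A short computation (using $\mu^*\leq\nu\leq\mu\vee\mu^*$, so $\mu\vee\nu=\mu\vee\mu^*$ and $\mu\wedge\nu=\alpha$) shows $\interval{\alpha}{\beta}\nearrow\interval{\nu}{\nu^+}$; in particular $\alpha\prec\beta\leq\mu$, and by Lemma~\ref{lemma:exAic18Lemma3.4}\eqref{item:centralizerinprojectiveintervals} we get $(\nu:\nu^+)=(\alpha:\beta)$. Applying Proposition~\ref{prop:typesandhomogenuity} to the prime quotient $\alpha\prec\beta\leq\mu$ identifies $(\alpha:\beta)$ with $(\Phi(\mu):\mu)$ in both type cases: in the abelian case this is item~\eqref{item:typetwocasehomogeneous}, while in the non-abelian case item~\eqref{item:typethreecasehomogeneous} forces $\alpha=\bottom{A}$ and $\beta=\mu$, making the identity trivial. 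Since $\nu$ is strictly meet irreducible, (SC1) gives $(\nu:\nu^+)\leq\nu^+$, and chaining the equalities yields $(\Phi(\mu):\mu)=(\alpha:\beta)=(\nu:\nu^+)\leq\nu^+=\mu\vee\mu^*$, as required. The main obstacle is exactly the middle paragraph: one must transport $\mu$ upward along $\mu^*$ to produce a prime quotient whose bottom element is genuinely meet irreducible in $\Con\ab{A}$, since this is what allows (SC1) — an a priori statement only about strictly meet irreducible congruences — to control the centralizer of the (possibly non-meet-irreducible) congruence $\mu$.
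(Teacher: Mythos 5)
Your proof is correct. It shares the paper's overall skeleton -- get $\Phi(\mu)=\bottom{A}$ from Proposition~\ref{prop:SC1impliesAPMI} and Proposition~\ref{prop:homogenuityandcolpementation}, identify $(\Phi(\mu):\mu)$ with the centralizer $(\nu:\nu^+)$ of a prime quotient at a strictly meet irreducible congruence via Lemma~\ref{lemma:exAic18Lemma3.4} and Proposition~\ref{prop:typesandhomogenuity}, and then let (SC1) do the work -- but the key intermediate object is produced in the opposite direction. The paper starts from an arbitrary prime quotient $\alpha\prec\beta\leq\mu$, cites \cite[Proposition~7.1]{AicMud09} to transpose it up to $\interval{\eta}{\eta^+}$ for a strictly meet irreducible $\eta\geq\alpha$ with $\eta\ngeq\beta$, and then needs a separate application of the splitting property to conclude $\eta^+\leq\mu\vee\mu^*$; it also (implicitly, via Lemma~\ref{lemma:exAic18Lemma3.4}\eqref{item:typesinprojactiveintervals}) upgrades $(\eta:\eta^+)\leq\eta^+$ to equality. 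You instead build the strictly meet irreducible congruence first, as a coatom $\nu$ of $\interval{\mu^*}{\mu\vee\mu^*}$, verify its strict meet irreducibility by hand from the splitting property of Proposition~\ref{prop:homogenuityandcolpementation}\eqref{item:splithomogeous}, and only then transpose down to $\nu\wedge\mu\prec\mu$. What your route buys is self-containment (no appeal to \cite[Proposition~7.1]{AicMud09}) and the exact identity $\nu^+=\mu\vee\mu^*$, so that the bare inequality $(\nu:\nu^+)\leq\nu^+$ from (SC1) already closes the argument; the cost is the extra covering analysis in your middle paragraph, which is correct as written (every upper cover $\rho$ of $\nu$ is forced to equal $\mu\vee\mu^*$ in both branches of the splitting). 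Your treatment of the non-abelian case via Proposition~\ref{prop:typesandhomogenuity}\eqref{item:typethreecasehomogeneous} coincides with the paper's.
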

\begin{proof}
Proposition~\ref{prop:SC1impliesAPMI} implies that $\ab{A}$ satisfies (APMI),
and Proposition~\ref{prop:homogenuityandcolpementation} yields $\Phi(\mu)=\bottom{A}$. 
Let $\alpha,\beta\in\Con{A}$ with $\alpha\prec\beta\leq\mu$. If $[\beta,\beta]\nleq\alpha$,
then Proposition~\ref{prop:typesandhomogenuity} implies that $(\bottom{A}:\mu)=\mu^*$ and the 
statement follows. 
Let us now consider the case that $[\beta,\beta]\leq\alpha$. 
From Proposition~\ref{prop:typesandhomogenuity} we infer that 
\[(\Phi(\mu):\mu)=(\alpha:\beta).\] 
Let $\eta$ be a strictly meet irreducible congruence of $\ab{A}$ with $\eta\geq \alpha$ 
and $\eta\ngeq\beta$. Then, \cite[Proposition~7.1]{AicMud09} yields that 
$\interval{\alpha}{\beta}\nearrow\interval{\eta}{\eta^+}$. 
Hence Lemma~\ref{lemma:exAic18Lemma3.4} implies that $(\alpha:\beta)=(\eta:\eta^+)$.
By (SC1) we have $(\eta:\eta^+)=\eta^+$. Since $\beta\leq \mu$ and $\eta\ngeq \beta$, 
we have $\eta\ngeq\mu$, and therefore 
Proposition~\ref{prop:homogenuityandcolpementation}  
yields that
$\eta\leq\mu\vee\mu^*$. Since $\eta^+=\beta\vee\eta$, 
we also have $\eta^+\leq \mu\vee\mu^*$. Thus, we have
$(\Phi(\mu):\mu)=\eta^+\leq\mu\vee\mu^*$.  
\end{proof}
\begin{proposition}\label{prop:if_an_homogeneues_congruence_is_not_neutral_then_it_is_abelian}
Let $\ab{A}$ be a finite Mal'cev algebra with (SC1), and let
$\mu$ be a homogeneous element of $\Con\ab{A}$ with
$[\mu,\mu]\neq \mu$. Then $(\Phi(\mu):\mu)=\mu\vee\mu^*$ and
$[\mu,\mu]=\bottom{A}$. 
\end{proposition}
\begin{proof}
Proposition~\ref{prop:the_centralizer_of_a_homogeneous_congruence}
yields that $\Phi(\mu)=\bottom{A}$ and $(\bottom{A}:\mu)\leq\mu\vee\mu^*$.
Moreover, if $[\mu,\mu]\neq \mu$, there exists $\eta\in\Con\ab{A}$
such that $[\mu,\mu]\leq\eta\prec\mu$. 
Thus Proposition~\ref{prop:typesandhomogenuity}
\eqref{item:typetwocasehomogeneous} yields that 
$(\Phi(\mu):\mu)\geq \mu\vee\mu^*$. 
Thus, we have that $(\bottom{A}:\mu)=\mu\vee\mu^*$. 
Hence we have $(\Phi(\mu):\mu)\geq \mu$, and therefore, 
$[\mu,\mu]\leq\Phi(\mu)=\bottom{A}$. 
\end{proof}
The following Lemma follows from the proof of 
\cite[Theorem~24]{IdzSlo01}, in particular equations 
(25) and (26). 
Note that the fact that $\quotienttct{\bottom{A}}{ \mu}$ is tame
follows from 
Proposition~\ref{prop:homogenuityandcolpementation}\eqref{item:simplecomplmodlatticebelowhom}.
\begin{lemma}[{\cite{IdzSlo01}}]\label{lemma:eq_24_25_26_from_IdzSlo01}
Let $ \ab{A}$ be a finite Mal'cev algebra with (SC1),
let $\mu$ be an abelian homogeneous congruence of
$\ab{A}$. Then the quotient $\quotienttct{\bottom{A}}{ \mu}$ is tame, 
and for each atom $\alpha$ of $\interval{\bottom{A}}{\mu}$ we have
$\Mtct{\ab{A}}{\bottom{A}}{\mu}=\Mtct{\ab{A}}{\bottom{A}}{\alpha}$.
Moreover, every $\quotienttct{\bottom{A}}{\mu}$-minimal set
contains only one $\quotienttct{\bottom{A}}{\mu}$-trace.
\end{lemma}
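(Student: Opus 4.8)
The plan is to deduce this result from the proof of \cite[Theorem~24]{IdzSlo01} by identifying precisely which hypotheses of that theorem are met here. The statement has three assertions: that $\quotienttct{\bottom{A}}{\mu}$ is tame; that $\Mtct{A}{\bottom{A}}{\mu}=\Mtct{A}{\bottom{A}}{\alpha}$ for every atom $\alpha$ of $\interval{\bottom{A}}{\mu}$; and that every $\quotienttct{\bottom{A}}{\mu}$-minimal set contains a single $\quotienttct{\bottom{A}}{\mu}$-trace. The tameness claim is the cleanest: as the lemma's preamble already indicates, Proposition~\ref{prop:homogenuityandcolpementation}\eqref{item:simplecomplmodlatticebelowhom} shows that $\interval{\bottom{A}}{\mu}$ is a simple complemented modular lattice, hence in particular tight in the sense of \cite[Exercises~1.14(5)]{HobbMcK}; then \cite[Theorem~2.11]{HobbMcK} delivers tameness of $\quotienttct{\bottom{A}}{\mu}$. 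I would state this first and dispense with it quickly.

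For the second and third assertions I would transcribe the argument underlying equations (25) and (26) of \cite[Theorem~24]{IdzSlo01}, checking at each point that the expanded-group hypotheses are replaced by the Mal'cev tools assembled in Sections~\ref{sec:preliminary_on_abelian_congruences}--\ref{sec:homegeneous_congruences}. The key structural input is that $\mu$ is abelian and homogeneous with $\interval{\bottom{A}}{\mu}$ a simple complemented modular lattice, so all atoms lie in one projectivity class; Lemma~\ref{lemma:isomorphisms_polynomial_restrictions_projective_intervals} then makes the polynomial actions on the various atom-classes coincide up to isomorphism. Concretely, to show $\Mtct{A}{\bottom{A}}{\mu}\subseteq\Mtct{A}{\bottom{A}}{\alpha}$ I would take a $\quotienttct{\bottom{A}}{\mu}$-minimal set $U$ and, using Theorem~\ref{teor:extending_target_minimal-set_to_congruence} together with the matrix coordinatization of Proposition~\ref{prop:identification_with_matrices}, produce the idempotent unary polynomials realizing the minimal sets and exhibit a polynomial collapsing $\mu$ to a single atom $\alpha$ while fixing the induced module structure. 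The density-theorem description of $\ab{R}_o\cong\ab{M}_n(\ab{D})$ lets me select matrix units $E^{(i,j)}$ that carry a $\quotienttct{\bottom{A}}{\mu}$-minimal set onto a $\quotienttct{\bottom{A}}{\alpha}$-minimal set and back, yielding equality of the two sets of minimal sets and hence of the $\Mtct{A}{\cdot}{\cdot}$ families.

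For the final clause I would argue that, after the coordinatization of Corollary~\ref{cor:introducing_ABp_field_abelian_congruences} or Proposition~\ref{prop:identification_with_matrices}, the traces inside a fixed minimal set correspond to the simple submodule summands, and the structure of $(o/\mu;+_o)$ as $(o/\alpha;+_o)^m$ together with the matrix-unit analysis in the proof of Theorem~\ref{teor:extending_target_minimal-set_to_congruence} forces a minimal set, after translation by Lemma~\ref{lemma:shifting_zero_minimal_set} so that $o\in U$, to meet only one coset class of the relevant atom; this is exactly the one-trace condition. The main obstacle I anticipate is the bookkeeping in the second assertion: verifying that the idempotent polynomials and matrix units transport minimal sets faithfully in both directions requires careful use of Lemma~\ref{lemma:ex_citation_to_prop2.6Aic06} to control commutator error terms, and one must check that the abelian hypothesis $[\mu,\mu]=\bottom{A}$ (available whenever $\card{o/\mu}>1$ forces $\mu$ nontrivial, via Proposition~\ref{prop:if_an_homogeneues_congruence_is_not_neutral_then_it_is_abelian}) is what makes these transport maps module homomorphisms rather than merely congruence-preserving. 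Once those identifications are in place, equations (25) and (26) of \cite{IdzSlo01} translate line by line into the Mal'cev setting and the lemma follows.
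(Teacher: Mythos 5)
The paper itself gives no proof of this lemma beyond the citation: it states that the result follows from the proof of \cite[Theorem~24]{IdzSlo01}, equations (25) and (26), and that tameness follows from Proposition~\ref{prop:homogenuityandcolpementation}\eqref{item:simplecomplmodlatticebelowhom} (the interval is a simple complemented modular lattice, hence tight, hence the quotient is tame by \cite[Theorem~2.11]{HobbMcK}). Your treatment of the tameness clause coincides with this exactly, and your overall plan --- defer to the external proof after checking that its hypotheses transfer to the Mal'cev setting --- is the same as the paper's.

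However, your proposed reconstruction of the second and third assertions is circular within the paper's own architecture. You invoke Lemma~\ref{lemma:shifting_zero_minimal_set} and Theorem~\ref{teor:extending_target_minimal-set_to_congruence}, but both take as a \emph{hypothesis} the existence of a $\quotienttct{\bottom{A}}{\mu}$-minimal set already contained in a single $\mu$-class. In the paper, that containment is only ever obtained via Lemma~\ref{lemma:minimal_set_contained_in_one_class}, whose proof consists precisely of the one-trace assertion of the present lemma combined with \cite[Theorem~8.5]{HobbMcK} (empty tail). So you cannot use that machinery to establish the one-trace property without assuming what you are trying to prove. The same objection applies to your final paragraph: identifying the traces of a minimal set with simple submodule summands via Proposition~\ref{prop:identification_with_matrices} or Corollary~\ref{cor:introducing_ABp_field_abelian_congruences} only makes sense once the minimal set is known to sit inside one $\mu$-class, since the coordinatization describes $\ab{A}\restrict{o/\mu}$ and says nothing about how a minimal set distributes across distinct $\mu$-classes --- which is exactly the content of the one-trace claim. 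A non-circular argument has to come from the actual computations in \cite{IdzSlo01} (which use (SC1) and homogeneity directly, not the coordinatization built later in this paper), and your proposal does not reproduce or verify those; ``translate line by line'' is doing all the work. As a citation-with-hypothesis-check the proposal is acceptable and matches the paper; as a self-contained proof it has a genuine gap.
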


\begin{lemma}\label{lemma:minimal_set_contained_in_one_class}
Let $ \ab{A}$ be a finite Mal'cev algebra with (SC1), 
let $\mu$ be an abelian homogeneous congruence of
$\ab{A}$, let $o\in A$, and let 
$U\in \Mtct{\ab{A}}{\bottom{A}}{\mu}$ such that 
$o\in U$. Then $U\subseteq o/\mu$. 
\end{lemma}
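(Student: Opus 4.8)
The plan is to show that the entire minimal set $U$ lies inside a single $\mu$-class; since $o\in U$, that class is then forced to be $o/\mu$, and the statement follows. By Lemma~\ref{lemma:eq_24_25_26_from_IdzSlo01} the quotient $\quotienttct{\bottom{A}}{\mu}$ is tame and $U$ contains exactly one $\quotienttct{\bottom{A}}{\mu}$-trace. Since here $\alpha=\bottom{A}$, a trace of $U$ is precisely a set $(x/\mu)\cap U$ with $\card{(x/\mu)\cap U}\geq 2$; thus the single-trace property says that exactly one $\mu$-class meets $U$ in at least two points, while every other $\mu$-class meets $U$ in at most one point. First I would use tameness and \cite[Theorem~2.8(2)]{HobbMcK} to fix an idempotent unary polynomial $e$ of $\ab{A}$ with $e(A)=U$; as $o\in U=e(A)$ and $e$ is idempotent, $e$ restricts to the identity on $U$.

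The key step is to turn the induced algebra $\ab{A}\restrict{U}$ into a Mal'cev algebra. Let $d$ be a Mal'cev polynomial of $\ab{A}$ and set $d_U:=e\circ d$ on $U^3$. Since $e$ maps into $U$ and fixes $U$ pointwise, for all $x,y\in U$ we get $d_U(x,x,y)=e(d(x,x,y))=e(y)=y$ and $d_U(x,y,y)=e(d(x,y,y))=e(x)=x$, so $d_U$ is a Mal'cev polynomial of $\ab{A}\restrict{U}$ and $\ab{A}\restrict{U}$ is a finite Mal'cev algebra. Moreover $\mu\restrict{U}:=\mu\cap(U\times U)$ is a congruence of $\ab{A}\restrict{U}$, because every polynomial of $\ab{A}\restrict{U}$ is the restriction of a $\mu$-preserving polynomial of $\ab{A}$. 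I would then invoke the standard fact that in a congruence permutable algebra all classes of a congruence are equinumerous, the bijections between the $\mu\restrict{U}$-class of $c_1$ and that of $c_2$ being provided by the maps $x\mapsto d_U(x,c_1,c_2)$ and $x\mapsto d_U(x,c_2,c_1)$. Consequently all $\mu\restrict{U}$-classes have the same cardinality.

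The $\mu\restrict{U}$-classes are exactly the nonempty intersections $(x/\mu)\cap U$. By the single-trace property one of them has at least two elements, hence by equinumerosity every $\mu\restrict{U}$-class has at least two elements; but the single-trace property also asserts that only one $\mu$-class meets $U$ in two or more points. Therefore $U$ meets exactly one $\mu$-class, i.e.\ $U=(x/\mu)\cap U\subseteq x/\mu$ for a single class $x/\mu$, and since $o\in U$ we conclude $x/\mu=o/\mu$ and $U\subseteq o/\mu$. The main obstacle is precisely ruling out a nonempty tail, that is, upgrading ``one $\mu$-class meets $U$ in $\geq 2$ points'' to ``$U$ lies in a single $\mu$-class''; the device that overcomes it is the observation that $e\circ d$ makes $\ab{A}\restrict{U}$ a Mal'cev algebra, so that the restricted congruence $\mu\restrict{U}$ is forced to have equinumerous classes, which is incompatible with the coexistence of a trace of size $\geq 2$ and one-point tail intersections.
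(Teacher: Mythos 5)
Your reduction of the lemma to ``$U$ has exactly one trace and empty tail'' is the right one, and the first part (tameness, the single trace via Lemma~\ref{lemma:eq_24_25_26_from_IdzSlo01}, the idempotent $e$ with $e(A)=U$, and the fact that $e\circ d$ makes $\ab{A}\restrict{U}$ a Mal'cev algebra with $\mu\cap(U\times U)$ among its congruences) is all correct. The proof breaks at the step you call a ``standard fact'': it is \emph{not} true that all classes of a congruence of a Mal'cev algebra are equinumerous. The maps $x\mapsto d_U(x,c_1,c_2)$ and $x\mapsto d_U(x,c_2,c_1)$ are well defined between the two classes, but they are not mutually inverse unless one has affine behaviour \emph{across} the classes of $\mu\restrict{U}$, i.e.\ unless the full relation $U\times U$ is (contained in) an abelian congruence of $\ab{A}\restrict{U}$ --- and that is essentially the conclusion you are trying to prove, not a hypothesis. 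Abelianness of $\mu$ only controls $d$ on triples lying in a single $\mu$-class.

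For a concrete counterexample to the invoked fact, take $A=\{0,1,2\}$ with the partition $\theta=\{\{0,1\},\{2\}\}$ and define a ternary $d$ by imposing $d(x,x,y)=y$, $d(x,y,y)=x$ and choosing the remaining values so that $d(x,y,z)/\theta=\bar{d}(x/\theta,y/\theta,z/\theta)$ for a Mal'cev operation $\bar{d}$ on the two-element quotient; then $(A;d)$ is a Mal'cev algebra in which $\theta$ is a congruence with classes of sizes $2$ and $1$. This is exactly the ``one trace plus singleton tail'' configuration you need to exclude, so your argument assumes away the only nontrivial point of the lemma. Minimality of $U$ does give you a dichotomy (each polynomial $x\mapsto e(d(e(x),c_1,c_2))$ either permutes $U$ or collapses $\mu\restrict{U}$), but the collapsing case still has to be ruled out, and that requires a genuine input. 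The paper gets the empty tail directly from \cite[Theorem~8.5]{HobbMcK}, which is the result you would need to reprove here; as it stands, your equinumerosity claim is false and the proof has a gap at its central step.
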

\begin{proof}
Lemma~\ref{lemma:eq_24_25_26_from_IdzSlo01} yields 
that $U$ has only one trace, and \cite[Theorem~8.5]{HobbMcK}
implies that $U$ has empty tail. 
\end{proof}
\begin{theorem}\label{teor:existence_of_idempotent_class_homogeneous}
Let $\ab{A}$ be a finite Mal'cev algebra
with (SC1), let $\mu$ be a homogeneous abelian congruence of
$\ab{A}$, and let $v\in A$. Then there exists 
an idempotent polynomial function $e_\mu^v$ of $\ab{A}$ such that 
$e_\mu^v(A)=v/\mu$. 
\end{theorem}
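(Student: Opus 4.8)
The plan is to split on $\card{v/\mu}$. If $\card{v/\mu}=1$, then $v/\mu=\{v\}$ and I would simply take $e_\mu^v(x):=d(x,x,v)$; the Mal'cev law $d(x,x,y)=y$ forces $e_\mu^v$ to be the constant map with value $v$, which is idempotent and has range $\{v\}=v/\mu$. So assume $\card{v/\mu}>1$. By Proposition~\ref{prop:homogenuityandcolpementation}\eqref{item:simplecomplmodlatticebelowhom} the interval $\interval{\bottom{A}}{\mu}$ is a finite simple complemented modular lattice, and by Lemma~\ref{lemma:eq_24_25_26_from_IdzSlo01} the quotient $\quotienttct{\bottom{A}}{\mu}$ is tame; hence $\quotienttct{\bottom{A}}{\mu}$-minimal sets exist, each has a single trace, and by Lemma~\ref{lemma:minimal_set_contained_in_one_class} each is contained in one $\mu$-class. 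The problem therefore reduces to exhibiting a single $\quotienttct{\bottom{A}}{\mu}$-minimal set contained in $v/\mu$: once this is available, Theorem~\ref{teor:extending_target_minimal-set_to_congruence}, applied with $o:=v$, produces the required idempotent polynomial $e_\mu^v$ with $e_\mu^v(A)=v/\mu$.

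To place a minimal set inside $v/\mu$ I would transport one by a translation. Using Lemma~\ref{lemma:shifting_zero_minimal_set} I fix a $\quotienttct{\bottom{A}}{\mu}$-minimal set $U$ together with a base point $o\in U$, so that $U\subseteq o/\mu$ and $\card{o/\mu}>1$, and I consider the unary polynomial $p\in\POL\ari{1}\ab{A}$ given by $p(x):=d(x,o,v)$. Since $x\equiv_\mu o$ implies $d(x,o,v)\equiv_\mu d(o,o,v)=v$, the map $p$ sends $o/\mu$ into $v/\mu$ and satisfies $p(o)=v$. If $p$ does not collapse $U$ to a point, then \cite[Theorem~2.8(3)]{HobbMcK} shows that $p(U)$ is again a $\quotienttct{\bottom{A}}{\mu}$-minimal set, and $p(U)\subseteq v/\mu$ by construction; this is precisely the minimal set needed above.

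The heart of the matter, and the step I expect to be the main obstacle, is thus to show that $p=d(\cdot,o,v)$ is non-collapsing on $U$; in fact I would aim for the stronger statement that $p$ is a bijection from $o/\mu$ onto $v/\mu$ with inverse $q(y):=d(y,v,o)$. The mechanism is the identity $q(p(x))=x$ for $x\in o/\mu$: setting $G(x,y):=d(d(x,y,v),v,y)$ one has $G(x,x)=x$ identically (from the Mal'cev laws) and $q(p(x))=G(x,o)$, so since $x\equiv_\mu o$ the two second arguments are $\mu$-related and I would try to transfer $G(x,x)=x$ to $G(x,o)=x$ using the abelianness $[\mu,\mu]=\bottom{A}$ through Lemma~\ref{lemma:ex_citation_to_prop2.6Aic06}. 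The delicate point is the jump between the two distinct classes $o/\mu$ and $v/\mu$ introduced by the constant $v$, which the commutator calculus of Lemma~\ref{lemma:ex_citation_to_prop2.6Aic06} only controls inside a single class. I would circumvent this by passing to the centralizer $(\bottom{A}:\mu)$, which by Proposition~\ref{prop:if_an_homogeneues_congruence_is_not_neutral_then_it_is_abelian} equals $\mu\vee\mu^*$: when $o$ and $v$ lie in the same $(\bottom{A}:\mu)$-class the term condition expressing $[\mu\vee\mu^*,\mu]=\bottom{A}$ gives injectivity, hence bijectivity, of $p$ at once. Reaching an arbitrary $v$ from a class that is known to carry a minimal set would then be handled by the coordinatization of the $\mu$-classes as $\ab{R}_o$-modules (Proposition~\ref{prop:identification_with_matrices}) together with the homogeneity of $\mu$, which make all $\mu$-classes of size greater than $1$ uniform enough to each contain such a minimal set.
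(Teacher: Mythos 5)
Your skeleton matches the paper's: dispose of $\card{v/\mu}=1$ with a constant polynomial, reduce the case $\card{v/\mu}>1$ to exhibiting a $\quotienttct{\bottom{A}}{\mu}$-minimal set contained in $v/\mu$, and then invoke Theorem~\ref{teor:extending_target_minimal-set_to_congruence} with $o:=v$. The gap sits exactly at the step you yourself call the heart of the matter, and your closing sentence does not repair it. Applying Lemma~\ref{lemma:ex_citation_to_prop2.6Aic06} to the Mal'cev operation with $\vec{u}=(x,o,v)$, $\vec{v}=(o,o,v)$, $\vec{w}=(o,o,o)$ gives $d(d(x,o,v),v,o)\equiv_{[\mu,\beta]}x$ only for $\beta\geq\Cg{\{(o,v)\}}$, so your identity $q(p(x))=x$ (hence the non-collapsing of $p=d(\cdot,o,v)$ on a minimal set) is established only when $\Cg{\{(o,v)\}}\leq(\bottom{A}:\mu)=\mu\vee\mu^*$. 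Since $\mu\vee\mu^*$ is in general a proper congruence, this reaches only those $v$ in $o/(\mu\vee\mu^*)$; for other $v$ the congruence $[\mu,\beta]$ may equal $\mu$, the relation becomes vacuous, and in a Mal'cev algebra that is not an expanded group there is no a priori reason why the unary polynomial $d(\cdot,o,v)$ should even be non-constant on a minimal set. The final appeal to Proposition~\ref{prop:identification_with_matrices} ``together with homogeneity'' cannot close this: that proposition coordinatizes a single class $o/\mu$ as an $\ab{R}_o$-module and says nothing about how $o/\mu$ and $v/\mu$ are related when $(o,v)\notin\mu\vee\mu^*$, nor about which $\mu$-classes contain minimal sets.

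The missing ingredient is a device that produces a minimal set \emph{at} $v$ rather than transporting one \emph{to} $v$. The paper's proof does exactly this: since $\card{v/\mu}>1$ there are an atom $\alpha$ of $\interval{\bottom{A}}{\mu}$ and $u\neq v$ with $(u,v)\in\alpha$; by \cite[Exercise~8.8(1)]{HobbMcK} the pair $\{u,v\}$ lies in a $\quotienttct{\bottom{A}}{\alpha}$-trace inside a $\quotienttct{\bottom{A}}{\alpha}$-minimal set $U$; Lemma~\ref{lemma:eq_24_25_26_from_IdzSlo01} --- which you cite, but only for tameness and the single-trace property --- also yields $\Mtct{\ab{A}}{\bottom{A}}{\alpha}=\Mtct{\ab{A}}{\bottom{A}}{\mu}$, so $U$ is a $\quotienttct{\bottom{A}}{\mu}$-minimal set containing $v$, and Lemma~\ref{lemma:minimal_set_contained_in_one_class} places it inside $v/\mu$. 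If you want to retain your translation scheme, you would first have to prove separately that every $\mu$-class of size greater than one meets the body of $\quotienttct{\bottom{A}}{\mu}$, which is essentially the trace argument above; as written, your proof establishes the theorem only for those $v$ lying in $o/(\mu\vee\mu^*)$ for some class $o/\mu$ already known to contain a minimal set.
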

\begin{proof}
If $\card{v/\mu}=1$, then the constant polynomial function
with constant value $v$ satisfies the requirements. 

Next, we assume that $\card{v/\mu}\geq 2$. Then there 
exist $u\in A$ and an atom $\alpha$ of 
$\interval{\bottom{A}}{\mu}$ such that $(v, u)\in\alpha$
and $u\neq v$. Thus, 
\cite[Exercise~8.8(1)]{HobbMcK} implies that 
there exist a 
$\quotienttct{\bottom{A}}{\alpha}$-minimal set $U$ and
a $\quotienttct{\bottom{A}}{\alpha}$-trace $N$
such that $\{u,v\}\subseteq N\subseteq U$.
Since by Lemma~\ref{lemma:eq_24_25_26_from_IdzSlo01}
the $\quotienttct{\bottom{A}}{\alpha}$-minimal sets 
are $\quotienttct{\bottom{A}}{\mu}$-minimal sets,
we infer that $U$ is a 
$\quotienttct{\bottom{A}}{\mu}$-minimal set 
that contains $v$.
Then, Lemma~\ref{lemma:minimal_set_contained_in_one_class} implies that
$U\subseteq v/\mu$. Moreover,
Proposition~\ref{prop:homogenuityandcolpementation}\eqref{item:simplecomplmodlatticebelowhom} 
implies that 
$\interval{\bottom{A}}{\mu}$ is a simple complemented 
modular lattice.
Thus, the existence of the desired 
idempotent polynomial function follows from 
Theorem~\ref{teor:extending_target_minimal-set_to_congruence}. 
\end{proof}
\begin{theorem}\label{teor:ABp_Implies_polynomial_equivalence}
Let $\ab{A}$ be a finite Mal'cev algebra with (SC1),
let $\mu\in \Con\ab{A}$ be abelian and homogeneous, and
let us assume that $(\ab{A}, \mu)$ satisfies (AB$p$).
Then each $\mu$-class that is not a singleton is 
a $\quotienttct{\bottom{A}}{\mu}$-minimal set.
\end{theorem}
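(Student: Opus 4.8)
The plan is to exploit that (AB$p$) forces the coordinate ring of an abelian class to be the field $\GF{p}$ itself, together with the fact that a field has no idempotents other than $0$ and $1$; this will leave no room for a polynomial to retract a non-singleton $\mu$-class onto a proper nonempty subset. Fix $v\in A$ with $\card{v/\mu}>1$. First I would record that $\interval{\bottom{A}}{\mu}$ is a finite simple complemented modular lattice: by Proposition~\ref{prop:SC1impliesAPMI} the hypothesis (SC1) gives (APMI), and since $\ab{A}$ is finite, Proposition~\ref{prop:homogenuityandcolpementation}\eqref{item:simplecomplmodlatticebelowhom} applies to the homogeneous congruence $\mu$. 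As $\mu$ is abelian we have $[\mu,\mu]=\bottom{A}$, so Corollary~\ref{cor:introducing_ABp_field_abelian_congruences} supplies a ring isomorphism $\ab{R}_v\cong\GF{p}$ and a group isomorphism $\epsilon_\mu^v\colon (v/\mu;+_v)\to \Z_p^m$ (with $m$ the height of $\interval{\bottom{A}}{\mu}$) that intertwines the $\ab{R}_v$-action with scalar multiplication. By Lemma~\ref{lemma:piu_meno_fanno_modulo_su-anello_polinomi_ristretto_nella_classe_congruenza} the induced algebra $\ab{A}\restrict{v/\mu}$ is polynomially equivalent to the $\ab{R}_v$-module $(v/\mu;+_v)$, hence, through $\epsilon_\mu^v$, to the $\GF{p}$-vector space $\Z_p^m$ with its scalar action.

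The crux is then an elementary computation inside $\Z_p^m$. Since the ring of scalars is $\GF{p}$, every unary polynomial of this vector space has the form $x\mapsto cx+b$ with $c\in\GF{p}$ and $b\in\Z_p^m$. Such a map is idempotent exactly when $c^2=c$ and $(c-1)b=0$, that is, when $c=0$ (a constant map) or $c=1$ (the identity). In particular, an idempotent unary polynomial of $\Z_p^m$ whose range has more than one element must be the identity, whose range is all of $\Z_p^m$. This is precisely where (AB$p$) enters: it forces $n=1$ in Proposition~\ref{prop:identification_with_matrices}, so that $\ab{R}_v$ is the field $\GF{p}$ rather than a matrix ring $\ab{M}_n(\ab{D})$ with $n>1$; the latter carries proper idempotents such as $E^{(1,1)}$, which is exactly the mechanism producing strictly smaller minimal sets in Theorem~\ref{teor:extending_target_minimal-set_to_congruence}.

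Finally I would assemble these facts. By Theorem~\ref{teor:existence_of_idempotent_class_homogeneous} there is an idempotent polynomial with range exactly $v/\mu$, and since $\card{v/\mu}>1$ it does not collapse $\mu$ into $\bottom{A}$; thus $v/\mu$ is a polynomial image witnessing $[\mu]\neq[\bottom{A}]$, and as $\ab{A}$ is finite it contains some $\quotienttct{\bottom{A}}{\mu}$-minimal set $U$ (a minimal member in the sense of \cite[Definition~2.5]{HobbMcK}). By Lemma~\ref{lemma:minimal_set_contained_in_one_class} we have $U\subseteq v/\mu$. Writing $U=e'(A)$ for an idempotent $e'\in\POL\ari{1}\ab{A}$ via \cite[Theorem~2.8(2)]{HobbMcK}, the inclusion $e'(v/\mu)\subseteq e'(A)=U\subseteq v/\mu$ shows that $e'\restrict{v/\mu}$ is an idempotent unary polynomial of $\ab{A}\restrict{v/\mu}$ with range $e'(v/\mu)=U$. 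Transporting $e'\restrict{v/\mu}$ through the polynomial equivalence and the isomorphism $\epsilon_\mu^v$, the set $\epsilon_\mu^v(U)$ is the range of an idempotent unary polynomial of $\Z_p^m$; since $\card{U}>1$, the previous paragraph forces $\epsilon_\mu^v(U)=\Z_p^m$, i.e.\ $U=v/\mu$. Hence $v/\mu$ coincides with the minimal set $U$, which is the assertion.

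I expect the main obstacle to be the bookkeeping around the induced algebra rather than any deep step: one must verify that $e'\restrict{v/\mu}$ is genuinely a polynomial operation of $\ab{A}\restrict{v/\mu}$ (which is guaranteed by $e'(v/\mu)\subseteq v/\mu$) and that idempotency and range transport faithfully, first across the polynomial equivalence of Lemma~\ref{lemma:piu_meno_fanno_modulo_su-anello_polinomi_ristretto_nella_classe_congruenza} and then across $\epsilon_\mu^v$. Conceptually the only substantive input is the identification $\ab{R}_v\cong\GF{p}$, that is, the reduction to $n=1$ afforded by (AB$p$); once the local structure is a one-dimensional scalar action over a field, the absence of proper idempotent retractions is immediate.
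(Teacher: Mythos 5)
Your proposal is correct and follows essentially the same route as the paper: both arguments reduce to the identification of $\ab{A}\restrict{v/\mu}$ with a $\GF{p}$-vector space via Corollary~\ref{cor:introducing_ABp_field_abelian_congruences}, produce a $\quotienttct{\bottom{A}}{\mu}$-minimal set $U\subseteq v/\mu$ realized as the range of an idempotent unary polynomial, and conclude $U=v/\mu$ from the fact that a non-constant idempotent affine map over a field must be the identity. The only (harmless) divergences are that you obtain $U$ by invoking Theorem~\ref{teor:existence_of_idempotent_class_homogeneous} together with finiteness, whereas the paper builds it directly from a trace through \cite[Exercise~8.8(1)]{HobbMcK}, Lemma~\ref{lemma:eq_24_25_26_from_IdzSlo01} and Lemma~\ref{lemma:minimal_set_contained_in_one_class} (which also guarantees $v\in U$, so the paper can work with the scalar of $e\restrict{v/\mu}\in R_v$ rather than a general affine map), and that your idempotency condition for $x\mapsto cx+b$ should read $c^{2}=c$ and $cb=0$ rather than $(c-1)b=0$, which does not affect the conclusion that the map is constant or the identity.
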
 
\begin{proof}
Note that 
Proposition~\ref{prop:homogenuityandcolpementation}\eqref{item:simplecomplmodlatticebelowhom} 
implies that 
$\interval{\bottom{A}}{\mu}$ is a simple complemented 
modular lattice.
Let $h$ be the height of $\interval{\bottom{A}}{\mu}$, 
and let us fix $v\in A$ with $\card{v/\mu}>1$.
Then
Corollary~\ref{cor:introducing_ABp_field_abelian_congruences}
implies that $\ab{A}\restrict{v/\mu}$ is polynomially equivalent to
a $\GF{p}$-vector space of dimension $h$. 
Moreover, since  $\card{v/\mu}>1$,
there exist
an atom $\alpha$ of $\interval{\bottom{A}}{\mu}$
and $u\in A\setminus\{v\}$ such that $(v, u)\in \alpha$.
Thus, 
\cite[Exercise~8.8(1)]{HobbMcK} implies that there exist a 
$\quotienttct{\bottom{A}}{\alpha}$-minimal set $U$ and
a $\quotienttct{\bottom{A}}{\alpha}$-trace $N$
such that $\{u,v\}\subseteq N\subseteq U$.
Hence by Lemma~\ref{lemma:eq_24_25_26_from_IdzSlo01},
$U$ is a $\quotienttct{\bottom{A}}{\mu}$-minimal set 
that contains $v$.
Then, Lemma~\ref{lemma:minimal_set_contained_in_one_class} implies that
$U\subseteq v/\mu$.
Finally, let $e$ be the idempotent polynomial constructed in 
\cite[Theorem~2.8(2)]{HobbMcK}, that satisfies $e(A)=U$.
Since $v\in U$, we have that $e\restrict{v/\mu}\in \ab{R}_v$
(defined as in \eqref{eq:equazione_che_definisce_l'universo_dell'anello_dei_polinomi_ristretti}
with $v$ in place of $o$).
Let $\epsilon_{\ab{R}_v}$ be the 
ring isomorphism constructed in
Corollary~\ref{cor:introducing_ABp_field_abelian_congruences},
and let
$\lambda:=\epsilon_{\ab{R}_v}(e\restrict{v/\mu})\in \GF{p}$. 
Since $e(u)=u\neq v$, $\lambda\neq 0$, and since $e$ is 
idempotent, $\lambda$ is idempotent, and therefore, 
$\lambda=1$. 
In view of Corollary~\ref{cor:introducing_ABp_field_abelian_congruences}
we have 
$\epsilon_\mu^v(e(x))=\lambda\epsilon_\mu^v(x)=\epsilon_\mu^v(x)$
for every $x\in v/\mu$, and since
$\epsilon_\mu^v$ is bijective, $e(x) = x$ 
for every $x\in v/\mu$.
Thus,
$U=v/\mu$, and $v/\mu$ is 
a $\quotienttct{\bottom{A}}{\mu}$-minimal set. 
\end{proof}

\section{Remarks on a well known strategy for interpolating partial functions with polynomials}\label{sec:interpolation}
A classical strategy  for interpolating congruence-preserving functions with 
polynomial functions is the following (cf.~\cite{AicIdz04, AicMud09}): 
First, one proves that the assumptions on the algebra under
consideration carry over to 
its quotients modulo a homogeneous congruence;
then one proceeds by induction and assumes that the function at hand
can be interpolated modulo a homogeneous congruence.
In the present and in the next section
we follow this pattern to interpolate congruence-preserving 
functions in Mal'cev algebras with (SC1) and (AB2). 

First, we fix some definitions. Let $\ab{A}$ be a Mal'cev algebra, 
let $k\in\N$, let $D\subseteq A^k$
let $T\subseteq D$, let $f\colon D\to A$,
and let $\mu\in \Con\ab{A}$.
We say that a polynomial function $p\in\POL\ari{k}\ab{A}$
\emph{interpolates $f$ on $T$ modulo $\mu$} if for all 
$\vec{t}\in T$ we have $p(\vec{t})\mathrel{\mu}f(\vec{t})$.
Moreover, we say that $p$ \emph{interpolates $f$ on $T$}
if $p$ interpolates $f$ on $T$ modulo $\bottom{A}$.
Finally, we say that \emph{$p$ interpolates $f$} if $p$
interpolates $f$ on $D$. 

The following lemma 
tells us how to combine  
``interpolation of functions that are constant modulo $\mu$''
and
``interpolation modulo $\mu$''
to interpolate a congruence-preserving function. 
This has been done in~\cite[Lemma~6]{KaaMay10} 
for total functions on finite sets. Here
we provide a generalization to partial functions 
on possibly infinite sets. 
\begin{lemma}\label{lemma:fromcosetstotheinfinirtyandbeyond}
Let $\ab{A}$ be a Mal'cev algebra,
let $\mu\in \Con\ab{A}$, and
let $\strset{R}$ be a set of relations on $A$ such that
$\Con\ab{A}\subseteq \strset{R}\subseteq \Inv\POL\ab{A}$.
We assume that
every partial function with finite domain
that preserves $\strset{R}$ and 
has image contained in one $\mu$-equivalence class can be interpolated by a polynomial 
function of $\ab{A}$.  
Let $k\in \N$, let $T$ be a finite subset of $A^k$, 
and let $f\colon T\to A$ be a
partial function that preserves $\strset{R}$. We assume 
that there exists $p_0\in \POL\ari{k}\ab{A}$ 
that interpolates $f$ on $T$ modulo $\mu$. 
Then there exists $p\in \POL\ari{k}\ab{A}$ that interpolates $f$ on $T$. 
\end{lemma}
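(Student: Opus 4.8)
The plan is to follow the expanded-group template used in \cite{KaaMay10}, but to carry every step out pointwise on the finite set $T$, so that neither totality of $f$ nor finiteness of $A$ is ever used. First I would fix an arbitrary base point $o\in A$ and define the ``difference'' partial function $g\colon T\to A$ by $g(\vec{t}):=d(f(\vec{t}),p_0(\vec{t}),o)$. Since $p_0$ interpolates $f$ on $T$ modulo $\mu$, for every $\vec{t}\in T$ we have $f(\vec{t})\mathrel{\mu}p_0(\vec{t})$, whence $g(\vec{t})=d(f(\vec{t}),p_0(\vec{t}),o)\mathrel{\mu}d(p_0(\vec{t}),p_0(\vec{t}),o)=o$. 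Thus the image of $g$ is contained in the single $\mu$-class $o/\mu$.

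The next step is to verify that $g$ preserves $\strset{R}$, which is a routine superposition argument. The partial function $f$ preserves $\strset{R}$ by hypothesis, while $p_0$ and the constant function $\vec{x}\mapsto o$ are polynomials of $\ab{A}$ and hence preserve every relation in $\Inv\POL\ab{A}\supseteq\strset{R}$. Given a relation $\rho\in\strset{R}$ of arity $n$ and tuples $\vec{t}^{(1)},\dots,\vec{t}^{(n)}\in T$ whose columns lie in $\rho$, the three tuples obtained by applying $f$, $p_0$ and the constant $o$ lie in $\rho$ (note that $\rho$ contains all constant tuples, as it is preserved by the constants), and applying the term $d$ rowwise keeps the result in $\rho$ because $d$ preserves $\rho$. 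Hence $g$ is a partial $\strset{R}$-preserving function with finite domain whose image lies in one $\mu$-class, and the standing assumption yields a polynomial $q\in\POL\ari{k}\ab{A}$ with $q(\vec{t})=g(\vec{t})$ for all $\vec{t}\in T$.

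It remains to recombine $q$ and $p_0$ into a polynomial $p$ interpolating $f$ exactly, and this is where I expect the real difficulty to lie. In an expanded group one simply sets $p(\vec{x}):=d(q(\vec{x}),o,p_0(\vec{x}))$ and invokes $d(d(a,b,o),o,b)=a$, which holds in every group. For a general Mal'cev algebra this is \emph{not} a consequence of the Mal'cev equations $d(x,x,y)=y$ and $d(x,y,y)=x$, and the naive recombination can return a value different from $f(\vec{t})$; so the crux of the whole proof is to find a reconstruction that is valid in the Mal'cev setting. My intended way around this is to avoid relying on a universal identity and instead to choose the correction function so that the recombination is correct by construction: rather than interpolating $d(f(\vec{t}),p_0(\vec{t}),o)$, I would interpolate the single-class function $\tilde g\colon T\to o/\mu$ characterised by $d(\tilde g(\vec{t}),o,p_0(\vec{t}))=f(\vec{t})$, and then set $p(\vec{x}):=d(q(\vec{x}),o,p_0(\vec{x}))$ for an interpolant $q$ of $\tilde g$. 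The delicate points, on which the argument stands or falls, are that for a suitable base point $o$ such a single-class $\tilde g$ exists at every $\vec{t}\in T$ and that it again preserves $\strset{R}$; I expect these to be the genuinely non-trivial parts, since the relevant transport maps $x\mapsto d(x,o,y)$ need not be bijective in a Mal'cev algebra the way they are in a group.

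Once a polynomial $p\in\POL\ari{k}\ab{A}$ with $p(\vec{t})=f(\vec{t})$ for all $\vec{t}\in T$ has been produced, the proof is finished. I would stress that every step above is performed separately at each point of the finite set $T$, so the passage from total functions on finite sets to partial functions on a possibly infinite $A$ costs nothing beyond keeping $T$ finite, which is exactly the generalisation of \cite[Lemma~6]{KaaMay10} claimed in the text.
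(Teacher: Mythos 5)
Your proposal correctly isolates the obstruction -- the identity $d(d(a,b,o),o,b)=a$ that makes the one-shot ``subtract, interpolate, add back'' scheme work in expanded groups is not a consequence of the Mal'cev equations -- but the workaround you sketch does not close the gap, and you say so yourself. Defining $\tilde g\colon T\to o/\mu$ implicitly by $d(\tilde g(\vec{t}),o,p_0(\vec{t}))=f(\vec{t})$ fails on both counts you flag: the transport map $x\mapsto d(x,o,p_0(\vec{t}))$ from $o/\mu$ to $p_0(\vec{t})/\mu$ need not be surjective (the natural candidate preimage $d(f(\vec{t}),p_0(\vec{t}),o)$ only maps back to $f(\vec{t})$ via the very identity you are trying to avoid), so $\tilde g(\vec{t})$ may simply not exist; and even where it exists, $\tilde g$ is defined as a preimage rather than as a superposition of $f$ with polynomials, so the routine composition argument that gives $\strset{R}$-preservation for your first function $g$ is unavailable for $\tilde g$. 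Since these are exactly the points on which, in your own words, ``the argument stands or falls,'' the proof is not complete.

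The paper's proof sidesteps the global recombination entirely by inducting on the number of points interpolated exactly. At each stage one has $p_i$ agreeing with $f$ on $\{\vec{t}_1,\dots,\vec{t}_i\}$ and with $f$ modulo $\mu$ on all of $T$; one forms the forward difference $\tilde f_i(\vec{t})=d(f(\vec{t}),p_i(\vec{t}),p_i(\vec{t}_{i+1}))$, whose image lies in the single class $p_i(\vec{t}_{i+1})/\mu$ and which preserves $\strset{R}$ because it is a composition of $\strset{R}$-preserving maps, interpolates it by $\tilde p_i$, and sets $p_{i+1}(\vec{a})=d(\tilde p_i(\vec{a}),p_i(\vec{t}_{i+1}),p_i(\vec{a}))$. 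No universal cancellation law is needed: at the old points $\vec{t}_j$ ($j\leq i$) one has $\tilde p_i(\vec{t}_j)=d(f(\vec{t}_j),f(\vec{t}_j),p_i(\vec{t}_{i+1}))=p_i(\vec{t}_{i+1})$, so the first two arguments of the outer $d$ coincide and $p_{i+1}(\vec{t}_j)=p_i(\vec{t}_j)=f(\vec{t}_j)$; at the new point the last two arguments coincide and $p_{i+1}(\vec{t}_{i+1})=f(\vec{t}_{i+1})$. Each recombination is thus exact precisely where it needs to be, using only $d(x,x,y)=y$ and $d(x,y,y)=x$ evaluated pointwise. If you want to repair your argument, replacing the single global correction by this one-point-at-a-time induction is the missing idea.
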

\begin{proof}
We show that 
for each subset $U$ of $T$ there exists 
a polynomial $p_U\in \POL\ari{k}\ab{A}$ such that
$p_U$ interpolates $f$ on $U$ and $p_U$ interpolates 
$f$ on $T$ modulo $\mu$.
We proceed by induction on the cardinality of $U$.  

\textbf{BASE}: $\card{U}=1$: Let $\vec{t}_1$ be the unique element 
of $U$. 
We construct $p_1\in \POL\ari{k}\ab{A}$ such that 
for all $\vec{t}\in T$ we have $(p_1(\vec{t}), f(\vec{t}))\in\mu$ 
and $f(\vec{t}_1)=p_1(\vec{t}_1)$. To this end, let $\tilde{f}\colon T\to p_0(\vec{t}_1)/\mu$
be defined by $\vec{t}\mapsto d(f(\vec{t}), p_0(\vec{t}), p_0(\vec{t}_1))$. 
Then, $\tilde{f}$ preserves $\strset{R}$ as it is a composition of functions 
that preserve $\strset{R}$. Hence there exists $\tilde{p}_0\in \POL\ari{k}\ab{A}$ that 
interpolates $\tilde{f}$. 
Let us define $p_1\colon A^k\to A$ by \[\vec{a}\mapsto d(\tilde{p}_0(\vec{a}), p_0(\vec{t}_1), p_0(\vec{a})).\]
Clearly, $p_1\in\POL\ari{k}\ab{A}$. Moreover we have
	\[
	\begin{split}
		p_1(\vec{t}_1)&=d(\tilde{p}_0(\vec{t}_1),p_0(\vec{t}_1), p_0(\vec{t}_1))\\
		&=d(d(f(\vec{t}_1),p_0(\vec{t}_1),p_0(\vec{t}_1)), p_0(\vec{t}_1), p_0(\vec{t}_1))\\
		&=d(f(\vec{t}_1),  p_0(\vec{t}_1), p_0(\vec{t}_1))=f(\vec{t}_1).
	\end{split}
	\] 
Furthermore, for all $\vec{t}\in T$ we have
	\[
	\begin{split}
		p_1(\vec{t})&=d(\tilde{p}_0(\vec{t}),p_0(\vec{t}_1), p_0(\vec{t}))\\
		&=d(d(f(\vec{t}),p_0(\vec{t}),p_0(\vec{t}_1)), p_0(\vec{t}_1), p_0(\vec{t}))\\
		&\mathrel{\mu}d(p_0(\vec{t}_1),  p_0(\vec{t}_1), p_0(\vec{t}))=p_0(\vec{t})\mathrel{\mu} f(\vec{t}).
	\end{split}
	\] 
This concludes the proof of the base of the induction.

\textbf{INDUCTION STEP}: We assume that 
$U=\{\vec{t}_1,\dots, \vec{t}_i, \vec{t}_{i+1}\}$ with
$i\geq 1$ and we assume that  
there exists $p_i\in\POL\ari{k}\ab{A}$ 
that interpolates $f$ on $\{\vec{t}_1,\dots, \vec{t}_i\}$ and interpolates 
$f$ on $T$ modulo $\mu$. We show that 
there exists $p_{i+1}\in\POL\ari{k}\ab{A}$ that interpolates $f$ on 
$\{\vec{t}_1,\dots, \vec{t}_{i+1}\}$
and that interpolates $f$ on $T$ modulo $\mu$.
First, we define $\tilde{f}_i \colon T\to A$ by 
$\vec{t}\mapsto d(f(\vec{t}),p_i(\vec{t}),p_{i}(\vec{t}_{i+1}))$.
Since for all $\vec{t}\in T$ we have $(f(\vec{t}),p_i(\vec{t}))\in\mu$, by the 
induction hypothesis 
we have that the image of $\tilde{f}_i$ is a subset of $ p_i(\vec{t}_{i+1})/\mu$.
Moreover, since $f$ preserves $\strset{R}$ and since $\strset{R}\subseteq \Inv\POL\ab{A}$, 
we have that $\tilde{f}_i$ preserves $\strset{R}$. Thus, there exists 
$\tilde{p}_i\in\POL\ari{k}\ab{A}$ such that for all $\vec{t}\in T$ we have 
$\tilde{f}_i(\vec{t})=\tilde{p}_i(\vec{t})$. 
Next, let us define $p_{i+1}\colon A^k\to A$ by 
\[\vec{a}\mapsto d(\tilde{p}_i(\vec{a}), p_i(\vec{t}_{i+1}), p_i(\vec{a})).\]
We show that $p_{i+1}$ 
interpolates $f$ on 
$\{\vec{t}_1,\dots, \vec{t}_{i+1}\}$
and interpolates $f$ on $T$ modulo $\mu$.
Clearly, $p_{i+1}\in\POL\ari{k}\ab{A}$, as it is a composition of polynomial functions. Moreover, for
all $j\leq i$ we have
\[
\begin{split}
	p_{i+1}(\vec{t}_j)&=d(\tilde{p}_i(\vec{t}_j),p_i(\vec{t}_{i+1}), p_i(\vec{t}_j))\\
	&=d(d(f(\vec{t}_j),p_i(\vec{t}_j),p_i(\vec{t}_{i+1})), p_i(\vec{t}_{i+1}), p_i(\vec{t}_j))\\
	&=d(p_i(\vec{t}_{i+1}),  p_i(\vec{t}_{i+1}), p_i(\vec{t}_j))=p_i(\vec{t}_j)=f(\vec{t}_j).
\end{split}
\] 
Furthermore, for all $\vec{t}\in T$ we have
\[
\begin{split}
	p_{i+1}(\vec{t})&=d(\tilde{p}_i(\vec{t}),p_i(\vec{t}_{i+1}), p_i(\vec{t}))\\
	&=d(d(f(\vec{t}),p_i(\vec{t}),p_i(\vec{t}_{i+1})), p_i(\vec{t}_{i+1}), p_i(\vec{t}))\\
	&\mathrel{\mu}d(p_i(\vec{t}_{i+1}),  p_i(\vec{t}_{i+1}), p_i(\vec{t}))=p_i(\vec{t})\mathrel{\mu} f(\vec{t}).
\end{split}
\] 
Finally, we have
\[
\begin{split}
	p_{i+1}(\vec{t}_{i+1})&=d(\tilde{p}_i(\vec{t}_{i+1}),p_i(\vec{t}_{i+1}), p_i(\vec{t}_{i+1}))\\
	&=d(d(f(\vec{t}_{i+1}),p_i(\vec{t}_{i+1}),p_i(\vec{t}_{i+1})), p_i(\vec{t}_{i+1}), p_i(\vec{t}_{i+1}))\\
	&=d(f(\vec{t}_{i+1}),  p_i(\vec{t}_{i+1}), p_i(\vec{t}_{i+1}))=f(\vec{t}_{i+1}).
\end{split}
\] 
This concludes the proof of 
the induction step.
\end{proof}
Lemma~\ref{lemma:fromcosetstotheinfinirtyandbeyond} motivates the study of those 
partial functions whose image is contained inside one equivalence class of a congruence.
In this section we
generalize the results from \cite[Section~7]{AicIdz04} to finite Mal'cev algebras with (SC1). 
We start with a lemma that can be viewed as a partial 
generalization of \cite[Proposition~3.1]{Aic00}.

\begin{lemma}\label{lemma.come_si_fanno_le_induzioni}
Let $\ab{A}$ be an algebra with Mal'cev polynomial $d$, 
and let us assume that $\Con\ab{A}$ has finite height.
Let $\mu$ be a homogeneous element of $\Con\ab{A}$, 
let $o\in A$, let $U=o/\mu$, let $k\in\N$, let $T$ be a finite
subset of $A^k$ with at least three elements, and let us assume
that there are $\vec{t}_1,\vec{t}_2\in T$ such that 
\[
\beta:=\Cg{\{(\vec{t}_1(1),\vec{t}_2(1)),\dots, (\vec{t}_1(k),\vec{t}_2(k))\}}\nleq (\Phi(\mu):\mu).
\]
Let $l\colon A^k\to U$, and let us assume that $l$ 
can be interpolated  by a polynomial function of $\ab{A}$
whose image is contained in $U$
on each subset of $T$ with cardinality $\card{T}-1$.
Then $l$ can be interpolated on $T$ by a 
polynomial function of $\ab{A}$ whose image is contained in $U$.
\end{lemma}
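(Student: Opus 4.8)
The plan is to turn the lattice-theoretic hypothesis on $\beta$ into a usable statement and then to build the interpolant by a single Mal'cev patch. First I would feed $\beta$ into Proposition~\ref{prop:homogeneueselementsplittingwithcentralizer}: since $\beta\nleq(\Phi(\mu):\mu)$, the dichotomy there forces $\mu\leq\beta$. This is the only role of the separated pair, and it is exactly what makes $\beta$ useful, for the following reason. As $\ab{A}$ has a Mal'cev polynomial its congruences permute, so the generated congruence admits the standard description $\beta=\{(f(\vec{t}_1),f(\vec{t}_2))\mid f\in\POL\ari{k}\ab{A}\}$. Combined with $\mu\leq\beta$ and the fact that any two elements of the single class $U=o/\mu$ are $\mu$-related, this yields: for every $a,b\in U$ there is a polynomial $f$ with $f(\vec{t}_1)=a$ and $f(\vec{t}_2)=b$.

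Next I would use the hypothesis to fix $p,q\in\POL\ari{k}\ab{A}$ with image in $U$ interpolating $l$ on $T\setminus\{\vec{t}_1\}$ and on $T\setminus\{\vec{t}_2\}$ respectively; in particular $p$ and $q$ both agree with $l$ on the common part $T\setminus\{\vec{t}_1,\vec{t}_2\}$. The engine is the observation that, for any polynomial $e$ with image in $U$ satisfying $e(\vec{t}_1)=p(\vec{t}_1)$, $e(\vec{t}_2)=q(\vec{t}_2)$ and $e(\vec{t})=l(\vec{t})$ for $\vec{t}\in T\setminus\{\vec{t}_1,\vec{t}_2\}$, the polynomial
\[
r(\vec{x}):=d(p(\vec{x}),e(\vec{x}),q(\vec{x}))
\]
interpolates $l$ on all of $T$. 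Indeed, on a common point all three arguments equal $l(\vec{t})$, so $r(\vec{t})=l(\vec{t})$; at $\vec{t}_1$ one has $d(p(\vec{t}_1),p(\vec{t}_1),l(\vec{t}_1))=l(\vec{t}_1)$ and at $\vec{t}_2$ one has $d(l(\vec{t}_2),q(\vec{t}_2),q(\vec{t}_2))=l(\vec{t}_2)$, using the Mal'cev identities. Moreover the image of $r$ lies in $U$ automatically, since $p(\vec{x}),e(\vec{x}),q(\vec{x})$ are all $\equiv_\mu o$ and hence $r(\vec{x})\equiv_\mu d(o,o,o)=o$. Thus the whole lemma reduces to producing such an $e$, and $e$ differs from the \emph{polynomial} $q$ only at the single point $\vec{t}_1$ (where it must take the value $p(\vec{t}_1)$ instead of $q(\vec{t}_1)=l(\vec{t}_1)$); equivalently, it is a one-point correction of $q$ at a point of the separated pair, its partner $\vec{t}_2$ and all common points being preserved.

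I would then carry out this one-point correction by induction on $\card{T}$, always keeping the $\beta$-separated pair $\{\vec{t}_1,\vec{t}_2\}$ inside the configuration. The point is that the function $e$ to be built is itself interpolable on the relevant proper subsets: on $T\setminus\{\vec{t}_2\}$ it coincides with $p$ and on $T\setminus\{\vec{t}_1\}$ it coincides with $q$, while for each common point $\vec{t}$ the restriction of $e$ to $T\setminus\{\vec{t}\}$ is again a one-point correction of a polynomial at $\vec{t}_1$ on a set of size $\card{T}-1$ that still contains the pair, hence is available by the inductive hypothesis. Feeding these $U$-valued interpolants of $e$ back into the same patch produces $e$, and the induction descends until only the pair $\{\vec{t}_1,\vec{t}_2\}$ remains.

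The genuine difficulty, which I expect to be the main obstacle, is precisely the bottom of this induction: realizing two prescribed values of $U$ at the separated pair by a polynomial \emph{whose image is contained in $U$}. The congruence description of the previous paragraph hands us a polynomial $f$ with the correct values at $\vec{t}_1$ and $\vec{t}_2$, but nothing forces $f$ to map into $U$, and the Mal'cev patch alone cannot pull a polynomial back into a single class. Overcoming this is exactly where $\mu\leq\beta$ must be combined with the homogeneity of $\mu$: homogeneity guarantees that $\quotienttct{\bottom{A}}{\mu}$ is tame and that the two points can be separated modulo $\mu$ by a polynomial, and the plan is to exploit this together with the idempotent polynomials attached to the $\quotienttct{\bottom{A}}{\mu}$-minimal sets sitting inside $U$ to manufacture the required $U$-valued interpolant. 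Reconciling this construction with the preservation of the remaining points is the crux of the whole argument.
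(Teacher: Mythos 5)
Your reduction is sound as far as it goes: the patch $r=d(p,e,q)$ does interpolate $l$ on $T$ and lands in $U$, provided a polynomial $e$ with image in $U$ exists that takes the value $p(\vec{t}_1)$ at $\vec{t}_1$, the value $q(\vec{t}_2)$ at $\vec{t}_2$, and agrees with $l$ on the common points. But you have not constructed $e$, and you say yourself that this is ``the crux of the whole argument.'' That is a genuine gap, not a routine detail: producing $e$ is an interpolation problem on $T$ of exactly the same type as the one you started with, only with different prescribed values, so the induction you sketch does not make progress, and its base case ($\card{T}=2$, two arbitrary prescribed values of $U$ at the $\beta$-separated pair, realized by a polynomial \emph{with image in $U$}) lies outside the lemma's hypotheses. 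The escape route you gesture at --- tameness of $\quotienttct{\bottom{A}}{\mu}$, minimal sets inside $U$, idempotent polynomials onto $U$ --- is not available here: the lemma does not assume (SC1), (APMI), (AB$p$), or that $\mu$ is abelian, and the paper only obtains an idempotent polynomial with range $o/\mu$ (Theorem~\ref{teor:existence_of_idempotent_class_homogeneous}) under (SC1) with $\mu$ abelian. In the non-abelian case the two-point interpolation inside a $\mu$-class is handled separately in Proposition~\ref{prop:interpolation_in_a_class_nonAbelia_case} by a different, nontrivial argument exploiting a failure of the term condition; none of that machinery is present in your sketch.

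The paper's proof avoids the need for a $U$-valued corrector altogether, and this is the idea you are missing. It defines two congruences $\eta\leq\alpha\leq\mu$ whose elements are pairs $(p'(\vec{t}_1),q'(\vec{t}_1))$ for polynomials $p',q'$ that agree \emph{modulo $\mu$ everywhere} and agree exactly on $\{\vec{t}_2,\dots,\vec{t}_n\}$ (resp.\ on $\{\vec{t}_3,\dots,\vec{t}_n\}$); it proves $[\alpha,\beta]\leq\eta$ by an explicit Mal'cev computation, and then uses homogeneity of $\mu$ (splitting on $[\mu,\mu]=\mu$ versus $[\mu,\mu]<\mu$, the latter via Proposition~\ref{prop:typesandhomogenuity} and the hypothesis $\beta\nleq(\Phi(\mu):\mu)$) to force $\alpha=\eta$. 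Since $(p(\vec{t}_1),q(\vec{t}_1))\in\alpha=\eta$ and $q(\vec{t}_1)=l(\vec{t}_1)$, the definition of $\eta$ hands over polynomials $p_2,p_3$ with $p_2\equiv_\mu p_3$ everywhere, $p_2=p_3$ on $\{\vec{t}_2,\dots,\vec{t}_n\}$, $p_2(\vec{t}_1)=p(\vec{t}_1)$ and $p_3(\vec{t}_1)=l(\vec{t}_1)$; the patch $d(p,p_2,p_3)$ then interpolates $l$ on $T$ and has image in $U$ because $p$ does and $p_2\equiv_\mu p_3$ --- the correcting polynomials themselves never need to map into $U$. Your proposal would need to be reworked along these lines (or supply an independent construction of $e$) to close the gap.
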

\begin{proof}
Let us consider the following two sets
\begin{align*}
\eta&=\Biggl\{(p(\vec{t}_1),q(\vec{t}_1))\mathrel{\Bigg\vert} p,
q\in \POL\ari{k}\ab{A},
\begin{aligned}[c]
&\forall \vec{x}\in A^k\colon& p(\vec{x})&\mathrel{\mu}q(\vec{x}),\\
&\forall i\in\{2,\dots , n\}\colon& p(\vec{t}_i)&=q(\vec{t}_i)
\end{aligned}
\Biggr\};\\
\alpha&=\Biggl\{(p(\vec{t}_1),q(\vec{t}_1))\mathrel{\Bigg\vert} p,
q\in \POL\ari{k}\ab{A},
\begin{aligned}
&\forall \vec{x}\in A^k\colon& p(\vec{x})&\mathrel{\mu} q(\vec{x}),\\
&\forall i\in\{3,\dots, n\}\colon& p(\vec{t}_i)&=q(\vec{t}_i)
\end{aligned}
\Biggr\}.
\end{align*}
It is easy to see that~$\eta$ and~$\alpha$ are reflexive
subuniverses of $\ab{A}\times\ab{A}$.
Now, by \cite[Lemma~5.22]{HobbMcK}, we have
$\alpha,\eta\in\Con\ab{A}$, and by their definition, that
$\alpha,\eta\in\interval{\bottom{A}}{\mu}$.
Our next goal is to prove that $\alpha=\eta$.
\par
First, we prove that $\alpha$ centralizes $\beta$ modulo $\eta$. 
By \cite[Proposition~2.3]{Aic06} we only need to prove that
for all $(u,v)\in \beta$, $(a,b)\in \alpha$ and $p\in
\POL\ari{2}\ab{A}$ with $p(a,u)\mathrel{\eta} p(a,v)$,
we have that $p(b, u) \mathrel{\eta} p(b, v)$.
To this end, 
let $p\in \POL\ari{2}\ab{A}$, 
$(u,v)\in\beta$, and $(a, b)\in\alpha$ with 
$(p(a,u), p(a,v))\in \eta$. 
Since $(a,b)\in \alpha$, there exist $p_a, p_b,\in \POL\ari{k}\ab{A}$
such that:
\begin{enumerate}
\item $\forall \vec{x}\in A^k\colon p_a(\vec{x})\mathrel{\mu}
p_b(\vec{x})$;
\item $\forall j\in \{3,\dots, n\}\colon
p_a(\vec{t}_j)=p_b(\vec{t}_j)$;
\item $p_a(\vec{t}_1)=a$ and $p_b(\vec{t}_1)=b$.
\end{enumerate}
Since $(u,v)\in \beta$,
\cite[Theorem~4.70(iii)]{McKMcnTay88}
yields
that there exist $q,q'\in \POL\ari{k}\ab{A}$ such that
$q(\vec{t}_1)=u$, $q(\vec{t}_2)=v$, $q'(\vec{t}_1)=v$ and
$q'(\vec{t}_2)=u$. We define $p_u,p_v\colon A^k \to A$ by letting
\[
\begin{split}
p_u(\vec{x})&=d(q(\vec{x}),q'(\vec{x}), v) \text{ and}\\
p_v(\vec{x})&= d(q(\vec{x}),u,v)
\end{split}
\]
for all $\vec{x}\in A^k$.
Thus, we have that $p_u,p_v\in \POL\ari{k}\ab{A}$, $p_u(\vec{t}_1)=u$,
$w:=p_u(\vec{t}_2)=d(v,u,v)=p_v(\vec{t}_2)$ and
$p_v(\vec{t}_1)=v$.
We further define $h,g\in\POL\ari{k}\ab{A}$ for all $\vec{x}\in
A^k$ by
\begin{align*}
g(\vec{x})&=  p(p_b(\vec{x}),p_u(\vec{x})),\\
h(\vec{x})    &=d(g(\vec{x}),\,
                  d(p(p_a(\vec{x}), p_v(\vec{x})),
                    p(p_a(\vec{x}), p_u(\vec{x})),
                    g(\vec{x})),\,
                  p(p_b(\vec{x}), p_v(\vec{x}))).
\end{align*}
We have for all $j\in\set{3, \dotsc, n}$ that
$p(p_a(\vec{t}_j),p_u(\vec{t}_j))=g(\vec{t}_j)$ and hence
\begin{align*}
h(\vec{t}_j)&=d(g(\vec{t}_j),\,
                d(p(p_a(\vec{t}_j), p_v(\vec{t}_j)),
                  p(p_a(\vec{t}_j), p_u(\vec{t}_j)),
                  g(\vec{t}_j)),\,
                p(p_b(\vec{t}_j), p_v(\vec{t}_j)))\\
            &=d(g(\vec{t}_j),\,
                d(p(p_a(\vec{t}_j), p_v(\vec{t}_j)),
                  g(\vec{t}_j),
                  g(\vec{t}_j)),\,
                p(p_b(\vec{t}_j), p_v(\vec{t}_j)))\\
            &=d(g(\vec{t}_j),\,
                p(p_a(\vec{t}_j), p_v(\vec{t}_j)),\,
                p(p_b(\vec{t}_j), p_v(\vec{t}_j)))\\
            &=d(g(\vec{t}_j),\,
                p(p_b(\vec{t}_j), p_v(\vec{t}_j)),\,
                p(p_b(\vec{t}_j), p_v(\vec{t}_j)))
             =g(\vec{t}_j).
\end{align*}
Moreover, since $w=p_u(\vec{t}_2)=p_v(\vec{t}_2)$, we have
\begin{align*}
h(\vec{t}_2)&=d(g(\vec{t}_2),\,
                d(p(p_a(\vec{t}_2), p_v(\vec{t}_2)),
                  p(p_a(\vec{t}_2), p_u(\vec{t}_2)),
                  g(\vec{t}_2)),\,
                p(p_b(\vec{t}_2), p_v(\vec{t}_2)))\\
            &=d(g(\vec{t}_2),\,
                d(p(p_a(\vec{t}_2), w),
                  p(p_a(\vec{t}_2), w),
                  g(\vec{t}_2)),\,
                p(p_b(\vec{t}_2), w))\\
            &=d(g(\vec{t}_2),\,
                g(\vec{t}_2),\,
                p(p_b(\vec{t}_2), w))\\
            &=p(p_b(\vec{t}_2), w)
             =p(p_b(\vec{t}_2), p_u(\vec{t}_2))
             =g(\vec{t}_2).
\end{align*}
For every $\vec{x}\in A^k$ we have
$p_a(\vec{x})\mathrel{\mu}p_b(\vec{x})$,
and hence 
\[p(p_a(\vec{x}),p_u(\vec{x}))\mathrel{\mu}p(p_b(\vec{x}),p_u(\vec{x})),\]
and therefore
$p(p_a(\vec{x}),p_u(\vec{x}))\mathrel{\mu}g(\vec{x})$.
Consequently,
\begin{multline*}
d(p(p_a(\vec{x}),p_v(\vec{x})),p(p_a(\vec{x}),p_u(\vec{x})),g(\vec{x}))
\mathrel{\mu}
d(p(p_b(\vec{x}),p_v(\vec{x})),g(\vec{x}),g(\vec{x}))\\
=p(p_b(\vec{x}),p_v(\vec{x})),
\end{multline*}
and hence
\begin{align*}
h(\vec{x})&=  d(g(\vec{x}),
                d(p(p_a(\vec{x}), p_v(\vec{x})),
                  p(p_a(\vec{x}), p_u(\vec{x})),
                  g(\vec{x})),
                p(p_b(\vec{x}), p_v(\vec{x})))\\
&\mathrel{\mu}d(g(\vec{x}),
                p(p_b(\vec{x}), p_v(\vec{x})),
                p(p_b(\vec{x}), p_v(\vec{x})))
=g(\vec{x}).
\end{align*}
From this we deduce that $h(\vec{t}_1)\mathrel{\eta}g(\vec{t}_1)$,
and thus, by applying the unary polynomial
$z\mapsto d(p(b,u),d(p(a,v),z,p(b,u)),p(b,v))$ to the pair
$(p(a,v),p(a,u))\in\eta$, we have
\begin{align*}
p(b,v)
&=d(p(b, u), p(b, u), p(b, v))\\
&=d(p(b, u), d(p(a, v), p(a, v),p(b, u)), p(b, v))\\
&\mathrel{\eta}d(p(b, u), d(p(a, v), p(a, u),p(b, u)), p(b, v))\\
&=h_1(\vec{t}_1)\mathrel{\eta} h_2(\vec{t}_1)=p(b, u).
\end{align*}
Hence $p(b, u)\mathrel{\eta} p(b, v)$.
Thus, by the definition of commutator we have 
 $[\alpha,\beta]\leq \eta$.
We now split the proof of $\alpha=\eta$ into two cases.

\textbf{Case 1}: $[\mu,\mu]=\mu$:
In this case, Proposition~\ref{prop:typesandhomogenuity}
yields that $\mu$ is an atom. 
By definition we have 
$\bottom{A}\leq \eta\leq \alpha\leq\mu$, and therefore,
if $\alpha=\bottom{A}$, then 
$\eta=\alpha$. 
Next, let us consider the case that
$\alpha=\mu$. 
Proposition~\ref{prop:homogeneueselementsplittingwithcentralizer}
implies that $\beta\geq \mu$, hence since $\alpha=\mu$ is not abelian,
\cite[Proposition~2.5]{Aic06} and the monotonicity of the commutator yield
\[\alpha=\mu=[\mu,\mu]\leq[\mu,\beta]=[\alpha,\beta]\leq\eta\leq\alpha.\]
Thus, $\alpha=\eta$.

\textbf{Case 2}: $[\mu,\mu]< \mu$:
We show that $[\alpha,\beta]=\alpha$. Seeking a contradiction, 
let us assume that $[\alpha,\beta]< \alpha$. Then 
there exists $\delta\in \Con\ab{A}$ such that 
$[\alpha,\beta]\leq\delta\prec\alpha\leq \mu$.
Hence 
Proposition~\ref{prop:typesandhomogenuity}\eqref{item:typetwocasehomogeneous} 
implies that $(\delta:\alpha)=(\Phi(\mu):\mu)$.
Moreover, \cite[Proposition~2.5]{Aic06} implies
$[\beta,\alpha]=[\alpha,\beta]\leq\delta$, and
therefore $\beta\leq (\delta:\alpha)=(\Phi(\mu):\mu)$. 
This yields a contradiction with the choice of $\vec{t}_1$ and $\vec{t}_2$. 
Thus, $[\alpha, \beta]=\alpha$, and therefore, $\alpha\leq\eta$. 
Since the definition of~$\alpha$ and~$\eta$ yields $\eta\leq \alpha$,
we conclude that $\eta=\alpha$. 

This concludes the proof that $\alpha=\eta$.
 
By assumption there are $p,q\in\POL\ari{k}\ab{A}$ with image contained in~$U$,
such that $p$ interpolates~$l$ at $\{\vec{t}_2,\dots , \vec{t}_n\}$, 
and~$q$ interpolates~$l$ at $\{\vec{t}_1,\vec{t}_3,\dots , \vec{t}_n\}$.
Since $U^2\subseteq \mu$, we have that
$p(\vec{x})\mathrel{\mu} q(\vec{x})$ for all $\vec{x}\in A^k$,
and that $p(\vec{t}_i)= l(\vec{t}_i)=q(\vec{t}_i)$ for every
$i\in\{3,\dots, n\}$. Hence $(p(\vec{t}_1), q(\vec{t}_1))\in\alpha$.
Since $\alpha=\eta$ and $q(\vec{t}_1)= l(\vec{t}_1)$, we have that
$(p(\vec{t}_1), l(\vec{t}_1))\in\eta$.
Therefore, there exist $p_2, p_3\in \POL\ari{k}\ab{A}$ such that:
\begin{enumerate}
\item $\forall i\in\{2,\dots, n\}\colon
p_2(\vec{t}_i)=p_3(\vec{t}_i)$;
\item $\forall \vec{x}\in A^k\colon p_2(\vec{x})\mathrel{\mu}
p_3(\vec{x})$;
\item $p_2(\vec{t}_1)=p(\vec{t}_1)$ and $p_3(\vec{t}_1)=l(\vec{t}_1)$.
\end{enumerate}
Let $p_T\colon A^k\to A$ be defined by
$p_T(\vec{x})=d(p(\vec{x}),p_2(\vec{x}),p_3(\vec{x}))$ for all
$\vec{x}\in A^k$.
Clearly, $p_T\in\POL\ari{k}\ab{A}$. Moreover, we have that for all
$i\in\{2,\dots, n\}$
\[p_T(\vec{t}_i)=
d(p(\vec{t}_i), p_2(\vec{t}_i),
p_3(\vec{t}_i))=p(\vec{t}_i)=l(\vec{t}_i).\]
Furthermore,
\[p_T(\vec{t}_1)=
d (p(\vec{t}_1), p_2(\vec{t}_1),
p_3(\vec{t}_1))=d(p(\vec{t}_1),p(\vec{t}_1),
l(\vec{t}_1))=l(\vec{t}_1).
\]
Moreover, we have that for all $\vec{x}\in A^k$
\[p_T(\vec{x})=
d(p(\vec{x}),p_2(\vec{x}), p_3(\vec{x}))\,\mu \, d(o,p_2(\vec{x}),
p_2(\vec{x}))=o.
\]
Thus, $p_T(\vec{x})\in o/\mu=U$ and we can conclude that $p_T$ has
target~$U$ and interpolates~$l$ on~$T$.
\end{proof}

\begin{proposition}\label{prop:interpolation_in_a_class_nonAbelia_case}
Let $\ab{A}$ be a Mal'cev algebra whose congruence lattice has finite
height,
let $\mu$ be a 
homogeneous non-abelian element of $\Con\ab{A}$,
let $o\in A$, and let $k\in\N$. Then for every finite subset $T$
of $A^k$,
and for all $l\colon T\to o/\mu$ the following are equivalent:
\begin{enumerate}
\item $l$ is congruence-preserving;\label{item:compatibel:in:prop:interpolation_in_a_class_nonAbelia_case}
\item there exists $p_T\in\POL\ari{k}\ab{A}$ such that for all $\vec{t}\in T$
we have $p_T(\vec{t})=l(\vec{t})$ and $p_T(A^k)\subseteq o/\mu$. \label{item:interpolable:in:prop:interpolation_in_a_class_nonAbelia_case}
\end{enumerate}
\end{proposition}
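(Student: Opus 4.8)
The plan is to treat the two implications separately, the direction \eqref{item:interpolable:in:prop:interpolation_in_a_class_nonAbelia_case}$\Rightarrow$\eqref{item:compatibel:in:prop:interpolation_in_a_class_nonAbelia_case} being immediate and the converse proceeding by induction on $\card{T}$. For the easy direction, if $p_T\in\POL\ari{k}\ab{A}$ satisfies $p_T\restrict{T}=l$ and $p_T(A^k)\subseteq o/\mu$, then $l$ is the restriction to $T$ of a polynomial function; since polynomial functions preserve the congruences of $\ab{A}$, so does any restriction of them, and \eqref{item:compatibel:in:prop:interpolation_in_a_class_nonAbelia_case} follows at once.

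Before starting the induction I would record what homogeneity and non-abelianness give us about $\mu$. By condition~(2) of Definition~\ref{def:homogeneous} together with Lemma~\ref{lemma:exAic18Lemma3.4}\eqref{item:typesinprojactiveintervals}, all prime quotients below $\mu$ are projective, hence simultaneously abelian or simultaneously non-abelian; as $\mu$ is non-abelian they are non-abelian. Applying Proposition~\ref{prop:typesandhomogenuity}\eqref{item:typethreecasehomogeneous} to an arbitrary prime quotient $\alpha\prec\beta\leq\mu$ then yields $\alpha=\bottom{A}$ and $\beta=\mu$, so that $\mu$ is an atom, $\Phi(\mu)=\bottom{A}$, and $(\Phi(\mu):\mu)=(\bottom{A}:\mu)=\mu^*$; moreover $\mu\wedge\mu^*=\bottom{A}$ by Lemma~\ref{lemma:whymumeetmustariszero}.

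For the converse I would induct on $n:=\card{T}$, extending $l$ arbitrarily to a total map $A^k\to o/\mu$ (its values off $T$ will never be used). If $n=1$ the constant polynomial with value $l(\vec{t}_1)$ interpolates $l$ and has image inside $o/\mu$. For $n\geq 2$ I would split according to whether the points of $T$ are separated by a congruence not lying under $\mu^*$. If $\Cg{\{(\vec{t}_i(m),\vec{t}_j(m))\mid m\in\finset{k}\}}\leq\mu^*$ for all $\vec{t}_i,\vec{t}_j\in T$, then $\vec{t}_i\equiv_{\mu^*}\vec{t}_j$ componentwise; since $l$ preserves $\mu^*$ we obtain $l(\vec{t}_i)\equiv_{\mu^*}l(\vec{t}_j)$, and because $l(\vec{t}_i),l(\vec{t}_j)\in o/\mu$ this gives $l(\vec{t}_i)\mathrel{(\mu\wedge\mu^*)}l(\vec{t}_j)$, i.e.\ $l(\vec{t}_i)=l(\vec{t}_j)$. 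Thus $l$ is constant on $T$ and a constant polynomial with image in $o/\mu$ does the job. Otherwise some pair, relabelled $\vec{t}_1,\vec{t}_2$, satisfies $\beta:=\Cg{\{(\vec{t}_1(m),\vec{t}_2(m))\mid m\in\finset{k}\}}\nleq\mu^*=(\Phi(\mu):\mu)$. The restriction of $l$ to any $(n-1)$-element subset of $T$ is again congruence-preserving with image in $o/\mu$, so by the induction hypothesis it can be interpolated by a polynomial with image in $o/\mu$; Lemma~\ref{lemma.come_si_fanno_le_induzioni} then produces the required $p_T$. When $n=2$ the index set $\{3,\dots,n\}$ appearing in that lemma is empty and its proof applies unchanged, the congruence there called $\alpha$ being $\mu$ itself, so I would either invoke the lemma in this degenerate form or reproduce its short argument inline.

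The main obstacle is the dichotomy driving the induction step. Everything rests on the fact that, in the absence of a separating pair, $l$ must be constant, and this collapse is furnished precisely by $\mu\wedge\mu^*=\bottom{A}$. That identity is available here only because non-abelianness of the homogeneous congruence $\mu$ forces $\mu$ to be an atom with $(\Phi(\mu):\mu)=\mu^*$; establishing this reduction (equivalently, ruling out a solvable but non-abelian homogeneous $\mu$) is the delicate point, after which Lemma~\ref{lemma.come_si_fanno_le_induzioni} carries the remaining weight uniformly, with no further recourse to (SC1) or to finiteness of $\ab{A}$.
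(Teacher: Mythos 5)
Your proof is correct, and for most of its length it coincides with the paper's: the preliminary reduction to ``$\mu$ is an atom, $(\Phi(\mu):\mu)=(\bottom{A}:\mu)=\mu^*$ and $\mu\wedge\mu^*=\bottom{A}$'', the trivial implication, and the induction step driven by the dichotomy ``separating pair or not'' followed by Lemma~\ref{lemma.come_si_fanno_le_induzioni} are exactly the paper's argument. The genuine divergence is the two-point base case with a separating pair. The paper handles it by a bespoke explicit construction (Case~3 of its induction base): it extracts from the failure of the term condition for $\mu$ over $\mu$ a binary polynomial $t$ with $t(a,u)=t(a,v)$ and $t(b,u)\neq t(b,v)$, and assembles the interpolant $p_T(\vec{z})=p(d(t(b,h(\vec{z})),t(a,h(\vec{z})),t(a,u)))$ directly, checking $p_T(A^k)\subseteq o/\mu$ via Lemma~\ref{lemma:ex_citation_to_prop2.6Aic06}. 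You instead run Lemma~\ref{lemma.come_si_fanno_le_induzioni} in the degenerate case $\card{T}=2$, and this does go through: the index set $\{3,\dots,n\}$ is empty, so the auxiliary congruence $\alpha$ of that proof equals $\mu$ (every $\mu$-pair is realized by two constant polynomials agreeing modulo $\mu$ everywhere), Case~1 of that proof still gives $\eta=\alpha=\mu$ because $\beta\geq\mu$ by Proposition~\ref{prop:homogeneueselementsplittingwithcentralizer} and $[\mu,\mu]=\mu$, and the final assembly $p_T=d(p,p_2,p_3)$ only needs the trivially available constant interpolants on the two singletons. Your route is more uniform (one mechanism for all $\card{T}\geq 2$) at the price of an unstated verification that the lemma survives the degeneration, which you should spell out rather than assert --- in particular that $\alpha=\mu$ there and that only the non-abelian case of that lemma's internal dichotomy is relevant. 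Two small points: first, the inference from ``$\mu$ non-abelian'' to ``the prime quotients below $\mu$ are non-abelian'' (equivalently $[\mu,\mu]=\mu$ once $\mu$ is an atom) deserves a word --- if all prime quotients below the homogeneous $\mu$ were abelian, Proposition~\ref{prop:typesandhomogenuity}\eqref{item:typetwocasehomogeneous} would give $[\mu,\mu]\leq\Phi(\mu)=\bottom{A}$ --- but the paper's own first line makes the same silent step, so this is not a defect specific to your argument; second, extending $l$ to a total map is harmless but unnecessary, since Lemma~\ref{lemma.come_si_fanno_le_induzioni} only evaluates $l$ on $T$.
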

\begin{proof}
We let $U:=o/\mu$. 
Proposition~\ref{prop:typesandhomogenuity}\eqref{item:typethreecasehomogeneous} yields
that $(\bottom{A}:\mu)=\mu^*$, and that $\mu$ is an atom of $\Con\ab{A}$. 
Thus, 
\begin{equation}\label{eq:equazione_centralizzatore_uguale_mu_star}
\Phi(\mu)=\bottom{A}\text{ and }(\Phi(\mu):\mu)=\mu^*.
\end{equation} 
Thus, Proposition~\ref{prop:homogeneueselementsplittingwithcentralizer} implies that 
\begin{equation}\label{eq:equazione_split}
\forall \alpha\in\Con\ab{A}: \alpha\geq \mu \text{ or }
\alpha\leq (\bottom{A}:\mu)=\mu^*.
\end{equation}

The implication \eqref{item:interpolable:in:prop:interpolation_in_a_class_nonAbelia_case} $\Rightarrow$ 
\eqref{item:compatibel:in:prop:interpolation_in_a_class_nonAbelia_case} is trivial. 
We prove the other implication by induction on $\card{T}$. 
\par
\textbf{BASE}: \emph{$\card{T}\leq 2$}: We split the base of the induction into three cases.
\par
\textbf{Case 1}: \emph{$\card{l(T)}=1$}: 
Let $a$ be the unique element of $l(T)$. Then the polynomial function with
constant value $a$ interpolates $l$ on $T$ and has image contained in $o/\mu$. 
\par 
\textbf{Case 2}: \emph{$\card{T}=2$ and $\card{T/(\Phi(\mu):\mu)}=1$}: We show that in this case 
$l(\vec{t}_1)=l(\vec{t}_2)$, and therefore the polynomial function with constant value
$l(\vec{t}_1)$ satisfies the required properties. To this end, 
we assume that $\card{T}=2$ and 
$\vec{t}_1\equiv_{(\bottom{A}:\mu) }\vec{t}_2$, and prove that $l(\vec{t}_1)=l(\vec{t}_2)$. 
Equation \eqref{eq:equazione_centralizzatore_uguale_mu_star}
implies that $(\bottom{A}:\mu)=\mu^*$. Hence the compatibility of $l$ implies that 
since $\vec{t}_1\equiv_{(\bottom{A}:\mu)}\vec{t}_2$, we have
$l(\vec{t}_1)\,\mu^* \, l(\vec{t}_2)$.
Since $l(T)\subseteq o/\mu$, we have that $l(\vec{t}_1)\mathrel{\mu} l(\vec{t}_2)$. 
Thus, Lemma~\ref{lemma:whymumeetmustariszero} implies that
$l(\vec{t}_1)=l(\vec{t}_2)$. 
\par
\textbf{Case 3}: 
\emph{$\card{T}=2$, $l(\vec{t}_1)\neq l(\vec{t}_2)$, and $\vec{t}_1\not\equiv_{(\bottom{A}:\mu)}\vec{t}_2$}:
Let $l(\vec{t}_1)=a_1$ and $l(\vec{t}_2)=a_2$. Since~$\mu$ is not abelian,
\cite[Proposition~2.3]{Aic06}
implies that there exist
$a,b,u,v\in A$ and $t\in \POL\ari{2}\ab{A}$ such that 
\[a\mathrel{\mu}b, \quad u\mathrel{\mu} v,
\quad  t(a,u)=t(a,v),\quad t(b,u)\neq t(b,v).\] 
Therefore, $\vec{t}_1\not\equiv_{(\bottom{A}:\mu)}\vec{t}_2$ and 
\eqref{eq:equazione_split} imply that
\[
(u,v)\in\mu\leq
\Cg{\{(\vec{t}_1(1),\vec{t}_2(1)),\dots,
(\vec{t}_1(k),\vec{t}_2(k))\}}.
\]
Then,
\cite[Theorem~4.70(iii)]{McKMcnTay88}
yields that there is $h\in\POL\ari{k}\ab{A}$ such that
$h(\vec{t}_1)=u$ and $h(\vec{t}_2)=v$.

We now focus on $\Cg{\{(t(b,u),t(b,v))\}}$. We have that 
$u\mathrel{\mu} v$, hence 
$t(b,u)\mathrel{\mu}t(b,v)$ and therefore
$\Cg{\{(t(b,u),t(b,v))\}}\leq \mu$. 
Since $t(b,u)\neq t(b,v)$, and since $\mu$ is an atom of $\Con\ab{A}$,
we have 
$\Cg{\{(t(b,u),t(b,v))\}}= \mu$.
Since
$(a_1,a_2)\in U^2\subseteq\mu=
\Cg{\{(t(b,u),t(b,v))\}}$,
\cite[Theorem~4.70(iii)]{McKMcnTay88}
yields
that there exists $p\in \POL\ari{1}\ab{A}$ such that 
\[p(t(b,u))=a_1\, \text{and}\, p(t(b,v))=a_2.\]
Let us define the $k$-ary polynomial $p_T\colon A^k\to A$ for all
$\vec{z}\in A^k$ by \[p_T(\vec{z}):=
p(d(t(b,h(\vec{z})),t(a,h(\vec{z})),t(a,u))).\]
Then for any $\vec{x}\in A^k$ such that $h(\vec{x})=u$, we have
\begin{equation}\label{eq:pT-is-lt1-on-hx=u}
p_T(\vec{x})=p(d(t(b,u),t(a,u),t(a,u)))=p(t(b,u))=a_1=l(\vec{t}_1).
\end{equation}
In particular, this holds for $\vec{x}=\vec{t}_1$.
Moreover, since $t(a,u)=t(a,v)$, we have
\[
p_T(\vec{t}_2)=p(d(t(b,v),t(a,v),t(a,u)))=p(t(b,v))=a_2=l(\vec{t}_2).
\]
For all $\vec{x}\in A^k$,
let $\alpha_\vec{x}:= \Cg{\{(u, h(\vec{x}))\}}$.
We have that for all $\vec{x}\in A^k$
\[(b, h(\vec{x}))\equiv_\mu (a, h(\vec{x}))\equiv_{\alpha_{\vec{x}}}
(a,u).\]
Thus, Lemma~\ref{lemma:ex_citation_to_prop2.6Aic06} implies that for all $\vec{x}\in A^k$
\begin{equation}\label{eq:equations_affine_modulo_commutators_in_interpolating_proposition}
d(t(b, h(\vec{x})),t(a, h(\vec{x})),t(a,u))\,[\mu, \alpha_{\vec{x}}]\,
t(d(b,a,a), d(h(\vec{x}),h(\vec{x}),u))=t(b,u).
\end{equation}
Since $[\mu, \alpha_{\vec{x}}]\leq \mu$,
condition~\eqref{eq:equations_affine_modulo_commutators_in_interpolating_proposition}
yields that for all $\vec{x}\in A^k$
\[
d(t(b, h(\vec{x})),t(a, h(\vec{x})),t(a,u))\mathrel{\mu} t(d(b,a,a),
d(h(\vec{x}),h(\vec{x}),u))=t(b,u).
\]
Thus, for all $\vec{x}\in A^k$ we have
\begin{align*}
p_T(\vec{x})&=p(d(t(b,h(\vec{x})), t(a, h(\vec{x})), t(a,u)))\\
&\equiv_\mu  p(t(d(b,a,a),d(h(\vec{x}),h(\vec{x}),u)))\\
&=p(t(b,u))=a_1=l(\vec{t}_1)\equiv_\mu o.
\end{align*}
Hence $p_T(\vec{x})\in U$ for all $\vec{x}\in A^k$.

\textbf{INDUCTION STEP}: 
Let $\card{T}=n\geq 3$, and let us
assume that for each $\vec{t}\in T$ the function $l$ can be interpolated on 
$T\setminus\{\vec{t}\}$
by a polynomial whose image is a subset of~$U$. We prove that~$l$ can be
interpolated on~$T$ by a polynomial whose image is a subset of $U$.
We now split the proof into two cases.
\par
\textbf{Case 1}: 
\emph{For all $\vec{s},\vec{t}\in T$
we have that 
\[
\Cg{\{(\vec{s}(1),\vec{t}(1)),\dots, (\vec{s}(k),\vec{t}(k))\}}\leq (\Phi(\mu):\mu).
\]}
By \eqref{eq:equazione_centralizzatore_uguale_mu_star} we have $(\Phi(\mu):\mu)=\mu^*$.
Since $l$ is congruence-preserving,
for all $\vec{s},\vec{t}\in T$ we have that
$l(\vec{s})\mathrel{\mu^*}l(\vec{t})$.
Moreover, since $f(T)\subseteq U$, we have that
$(l(\vec{s}),l(\vec{t}))\in \mu\wedge \mu^*=\bottom{A}$.
Hence $\card{l(T)}=1$, and the polynomial with constant value $l(\vec{t}_1)$
interpolates $l$ on $T$ and has image contained in $U$. 

\textbf{Case 2}: \emph{There exist $\vec{t}_1, \vec{t}_2\in T$
such that\[
\Cg{\{(\vec{t}_1(1),\vec{t}_2(1)),\dots, (\vec{t}_1(k),\vec{t}_2(k))\}}\nleq (\Phi(\mu):\mu).
\]}
Then, we can apply 
Lemma~\ref{lemma.come_si_fanno_le_induzioni}
and conclude that $l$ can be interpolated on $T$ by a polynomial 
function of $\ab{A}$ whose image is a subset of $U$. 
\end{proof}
For a congruence $\alpha$ of a (Mal'cev) algebra $\ab{A}$ and for 
$k\in\N$ we define
\[
\alpha^k:=\Biggl\{\begin{pmatrix}
\vec{a}\\
\vec{b}
\end{pmatrix}\in (A^k)^2\mid 
\forall i\leq k\colon (\vec{a}(i),\vec{b}(i))\in\alpha
\Biggr\}.
\]
\begin{proposition}\label{prop:interpolating_on_cosets_abelian_case}
Let $\ab{A}$ be a finite Mal'cev algebra with (SC1), let $\mu$ be a
homogeneous abelian element of $\Con\ab{A}$,
let $o\in A$, let $k\in\N$, let $T$ be a
finite subset of $A^k$, 
and let $f\colon T\to o/\mu$. Then the following are equivalent:
\begin{enumerate}
\item There exists $p_T\in\POL\ari{k}\ab{A}$ with $p_T(\vec{t})=f(\vec{t})$ for all 
$\vec{t}\in T$, and $p_T(A^k)\subseteq o/\mu$;\label{item:interpolating_polynomial_prop:interpolating_on_cosets_abelian_case}
\item for each $(\Phi(\mu):\mu)^k$-equivalence class of the form 
$\vec{v}/(\Phi(\mu):\mu)^k$ there exists $p_{\vec{v}}\in\POL\ari{k}\ab{A}$
such that $p_{\vec{v}}(\vec{t})=f(\vec{t})$ for all 
$\vec{t}\in T\cap (\vec{v}/(\Phi(\mu):\mu)^k)$. \label{item:interpolation-on_cosets_prop:interpolating_on_cosets_abelian_case}
\end{enumerate}   
\end{proposition}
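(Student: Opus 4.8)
The implication \eqref{item:interpolating_polynomial_prop:interpolating_on_cosets_abelian_case} $\Rightarrow$ \eqref{item:interpolation-on_cosets_prop:interpolating_on_cosets_abelian_case} is immediate: a single $p_T$ interpolating $f$ on all of $T$ interpolates $f$ on each intersection $T\cap(\vec{v}/(\Phi(\mu):\mu)^k)$, so one takes every $p_{\vec{v}}$ equal to $p_T$. The content lies in the converse, which I would prove by induction on $\card{T}$, using Lemma~\ref{lemma.come_si_fanno_le_induzioni} as the engine that patches interpolants across distinct cosets. Throughout I would abbreviate $\psi:=(\Phi(\mu):\mu)$ and fix, via Theorem~\ref{teor:existence_of_idempotent_class_homogeneous}, an idempotent polynomial $e$ with $e(A)=o/\mu$; since $e$ restricts to the identity on $o/\mu$, for any $q\in\POL\ari{k}\ab{A}$ whose values on a set $S$ already lie in $o/\mu$ the composite $e\circ q$ has image contained in $o/\mu$ and still agrees with $q$ on $S$. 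This idempotent is what lets me dispose of the image constraint $p_T(A^k)\subseteq o/\mu$ uniformly; in particular, applying it to the polynomials of \eqref{item:interpolation-on_cosets_prop:interpolating_on_cosets_abelian_case} I may assume each $p_{\vec{v}}$ already has image inside $o/\mu$. (If $\card{o/\mu}=1$ then $f$ is constant and a constant polynomial settles everything, so I assume $\card{o/\mu}>1$.)

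The induction would split according to how $T$ meets the $\psi^k$-cosets. If $T$ lies in a single coset $\vec{v}/\psi^k$, then $T\cap(\vec{v}/\psi^k)=T$ and $p_{\vec{v}}$ is already the desired $p_T$. Otherwise $T$ meets at least two cosets, so there are $\vec{t}_1,\vec{t}_2\in T$ with $\vec{t}_1\not\equiv_{\psi^k}\vec{t}_2$; choosing a coordinate $j$ with $(\vec{t}_1(j),\vec{t}_2(j))\notin\psi$ shows that
\[
\beta:=\Cg{\{(\vec{t}_1(1),\vec{t}_2(1)),\dots,(\vec{t}_1(k),\vec{t}_2(k))\}}\nleq\psi ,
\]
whence Proposition~\ref{prop:homogeneueselementsplittingwithcentralizer} forces $\beta\geq\mu$. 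This is exactly the spread hypothesis demanded by Lemma~\ref{lemma.come_si_fanno_le_induzioni}.

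For the base $\card{T}\leq 2$ I would argue directly. The cases $\card{T}\leq 1$, and $\card{T}=2$ with $\vec{t}_1\equiv_{\psi^k}\vec{t}_2$, fall under the single-coset argument above. If $\card{T}=2$ and $\vec{t}_1\not\equiv_{\psi^k}\vec{t}_2$, then $(f(\vec{t}_1),f(\vec{t}_2))\in\mu\leq\beta$, so \cite[Theorem~4.70(iii)]{McKMcnTay88} yields $h\in\POL\ari{k}\ab{A}$ with $h(\vec{t}_i)=f(\vec{t}_i)$ for $i\in\{1,2\}$, and $e\circ h$ is the required interpolant. For the inductive step, with $\card{T}=n\geq 3$ and $T$ meeting at least two cosets, I would extend $f$ to a total $l\colon A^k\to o/\mu$ (set $l$ equal to $o$ off $T$). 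For each $\vec{t}\in T$ the restriction $f\restrict{T\setminus\{\vec{t}\}}$ still satisfies \eqref{item:interpolation-on_cosets_prop:interpolating_on_cosets_abelian_case} — the same $p_{\vec{v}}$ serve — so by the induction hypothesis $l$ is interpolated on $T\setminus\{\vec{t}\}$ by a polynomial with image in $o/\mu$. Thus $l$ is interpolable, with image in $o/\mu$, on every $(n-1)$-element subset of $T$, and together with the spread pair $\vec{t}_1,\vec{t}_2$ above Lemma~\ref{lemma.come_si_fanno_le_induzioni} produces $p_T\in\POL\ari{k}\ab{A}$ interpolating $l$ (hence $f$) on $T$ with image contained in $o/\mu$.

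The genuine difficulty is thus entirely absorbed into Lemma~\ref{lemma.come_si_fanno_le_induzioni}, whose proof performs the commutator-theoretic heavy lifting; the points that still demand care in the present argument are bookkeeping ones. The main one is checking that hypothesis \eqref{item:interpolation-on_cosets_prop:interpolating_on_cosets_abelian_case} \emph{descends} to every $(n-1)$-element subset of $T$, so that the induction hypothesis may legitimately be invoked on all of them simultaneously (as Lemma~\ref{lemma.come_si_fanno_le_induzioni} requires interpolation on each subset of size $\card{T}-1$), and that a non-$\psi^k$-related pair $\vec{t}_1,\vec{t}_2$ really supplies a principal congruence $\beta\nleq\psi$ lying above $\mu$, as needed to meet the spread hypothesis of that lemma. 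I expect no conceptual obstacle beyond correctly assembling these facts, since the abelianness of $\mu$ together with (SC1) already guarantees, through Theorem~\ref{teor:existence_of_idempotent_class_homogeneous}, the idempotent $e$ that makes the image condition free.
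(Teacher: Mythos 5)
Your proposal is correct and follows essentially the same route as the paper: the trivial direction, then induction on $\card{T}$ with the single-coset case handled by composing $p_{\vec{v}}$ with the idempotent from Theorem~\ref{teor:existence_of_idempotent_class_homogeneous}, the two-point cross-coset case via $\vartheta\geq\mu$ and \cite[Theorem~4.70(iii)]{McKMcnTay88}, and the inductive step delegated to Lemma~\ref{lemma.come_si_fanno_le_induzioni}. Your extra care in extending $f$ to a total map and in checking that hypothesis (2) descends to subsets is a harmless sharpening of what the paper leaves implicit.
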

\begin{proof}
The implication 
\eqref{item:interpolating_polynomial_prop:interpolating_on_cosets_abelian_case}
$\Rightarrow$
\eqref{item:interpolation-on_cosets_prop:interpolating_on_cosets_abelian_case}
is trivial.
Next, we prove the opposite implication by induction on $\card{T}$. 
For each pair $\vec{t}_1, \vec{t}_2$ of elements of $T$ we let 
\[
\vartheta_{\ab{A}}(\vec{t}_1, \vec{t}_2):=\Cg{\{(\vec{t}_1(1),\vec{t}_2(1)),\dots, (\vec{t}_1(k),\vec{t}_2(k))\}}.
\]
\par
\textbf{BASE}: \emph{$\card{T}\leq 2$}: We split the base into two cases.
\par
\textbf{Case 1}: \emph{$\card{T/(\Phi(\mu):\mu)^k}=1$}: Then 
\eqref{item:interpolation-on_cosets_prop:interpolating_on_cosets_abelian_case} implies that
there exists $p\in\POL\ari{k}\ab{A}$ such that 
for all $\vec{t}\in T$ we have $f(\vec{t})=p(\vec{t})$. Let $e_\mu^o$
be the idempotent polynomial function constructed in 
Theorem~\ref{teor:existence_of_idempotent_class_homogeneous}. 
Then we have $p_T:=e_\mu^o\circ p$ satisfies the requirements of 
\eqref{item:interpolating_polynomial_prop:interpolating_on_cosets_abelian_case}. 
\par
\textbf{Case 2}: \emph{There exist $\vec{t}_1,\vec{t}_2\in T$ with $(\vec{t}_1,\vec{t}_2)\notin (\Phi(\mu):\mu)^k$}: 
If $(\vec{t}_1,\vec{t}_2)\notin (\Phi(\mu):\mu)^k$ then $\vartheta_{\ab{A}}(\vec{t}_1, \vec{t}_2)\nleq (\Phi(\mu):\mu)$.
Thus, by Proposition~\ref{prop:homogeneueselementsplittingwithcentralizer},
$\vartheta_{\ab{A}}(\vec{t}_1, \vec{t}_2)\geq \mu$.
Since $f(\vec{t}_1)\mathrel{\mu}f(\vec{t}_2)$, we have 
$(f(\vec{t}_1),f(\vec{t}_2))\in\vartheta_{\ab{A}}(\vec{t}_1, \vec{t}_2)$. 
Thus, \cite[Theorem~4.70(iii)]{McKMcnTay88}
yields that there exists $h\in\POL\ari{k}\ab{A}$
such that $f(\vec{t}_1)=h(\vec{t}_1)$ and $f(\vec{t}_2)=h(\vec{t}_2)$. 
Let $e_\mu^o$
be the idempotent polynomial function constructed in 
Theorem~\ref{teor:existence_of_idempotent_class_homogeneous}.
Thus, $p_T:=e_\mu^o\circ h$ satisfies the requirements of
\eqref{item:interpolating_polynomial_prop:interpolating_on_cosets_abelian_case}.
\par
\textbf{INDUCTION STEP}: We split the induction step into two cases.
\par
\textbf{Case 1}: \emph{There exist $\vec{t}_1,\vec{t}_2\in T$ 
such that $\vartheta_{\ab{A}}(\vec{t}_1, \vec{t}_2)\nleq (\Phi(\mu):\mu)$}:
By induction hypothesis we have that $\card{T}\geq 3$ and
we can interpolate $f$ with a polynomial whose image is contained 
in $o/\mu$ on every subset of $T$ of cardinality $\card{T}-1$. 
Then we can apply Lemma~\ref{lemma.come_si_fanno_le_induzioni}
and obtain a polynomial with image contained in $o/\mu$
that interpolates $f$ on $T$.
\par
\textbf{Case 2}: \emph{For all $\vec{t}_1,\vec{t}_2\in T$ we have that 
$\vartheta_{\ab{A}}(\vec{t}_1,\vec{t}_2)\leq (\Phi(\mu):\mu)$}:
Then $\card{T/(\Phi(\mu):\mu)^k}=1$ and we can argue as in the 
first case of the induction base. 
\end{proof}

\section{Finite Mal'cev algebras with (SC1) and (AB2) are strictly 1-affine complete}\label{sec:proof_theorem_strict_affine_compl}
In this section we prove 
Theorem~\ref{teor.main_theorem_sc1_ab2_imply_compl}.
\begin{proposition}\label{prop:qui_si_interpola_nel_caso_abeliano_le_funzioni_compatibili_definite_sui_coset_del_centralizzatore}
Let $\ab{A}$ be a finite Mal'cev algebra
with (SC1), let $o, v\in A$, 
let $\mu$ be an homogeneous
abelian element of $\Con\ab{A}$ such that 
$(\ab{A}, \mu)$ satisfies (AB2),
let $T\subseteq v/(\bottom{A}:\mu)$, and let $f\colon
T\to o/\mu$ be a congruence-preserving function. 
Then $f$ can be interpolated on $T$
by a polynomial whose image is contained in $o/\mu$. 
\end{proposition}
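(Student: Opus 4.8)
The plan is to reduce the problem, by means of an idempotent polynomial that ``projects along $\mu^*$'', to the interpolation of a congruence-preserving partial self-map of a single $\mu$-class, where the $\GF{2}$-vector space structure furnished by Corollary~\ref{cor.caratterizzazione_funzioni_compatibili_in_una_medesima_classe} makes the interpolation transparent. I may assume $\card{o/\mu}>1$ and $\card{v/\mu}>1$, since if $o/\mu$ is a singleton $f$ is constant, and if $v/\mu$ is a singleton then (by the equivalence established below) $T$ lies in one $\mu^*$-class and $f$ is again constant; in both cases a constant polynomial works. As $\mu$ is abelian and homogeneous with $\mu>\bottom{A}$, Proposition~\ref{prop:if_an_homogeneues_congruence_is_not_neutral_then_it_is_abelian} gives $(\bottom{A}:\mu)=\mu\vee\mu^*$ and Lemma~\ref{lemma:whymumeetmustariszero} gives $\mu\wedge\mu^*=\bottom{A}$; hence $\interval{\bottom{A}}{\mu}\nearrow\interval{\mu^*}{\mu\vee\mu^*}$ and $T\subseteq v/(\mu\vee\mu^*)$. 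The first observation is that $f$ is constant on $\mu^*$-classes: if $t_1\mathrel{\mu^*}t_2$ in $T$, then $\Cg{\{(t_1,t_2)\}}\leq\mu^*$, so compatibility yields $(f(t_1),f(t_2))\in\mu^*$, while $f(t_1),f(t_2)\in o/\mu$ gives $(f(t_1),f(t_2))\in\mu$; thus $(f(t_1),f(t_2))\in\mu\wedge\mu^*=\bottom{A}$ and $f(t_1)=f(t_2)$.

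Next I would set $e:=e_\mu^v$, the idempotent polynomial with $e(A)=v/\mu$ from Theorem~\ref{teor:existence_of_idempotent_class_homogeneous}, and show that on $v/(\mu\vee\mu^*)$ it realises the $\mu$-projection. Writing any $t$ as $t=d(b,v,c)$ with $b\in v/\mu$ and $c\in v/\mu^*$ (Lemma~\ref{lemma:conseguenze_permutabilita_intervalli_proiettivi_esistenza,d(b,o,c)}), and using that $e$ is constant on $\mu^*$-classes (being compatible with $e(A)\subseteq v/\mu$ and $\mu\wedge\mu^*=\bottom{A}$), Lemma~\ref{lemma:ex_citation_to_prop2.6Aic06} together with $[\mu,\mu^*]\leq\mu\wedge\mu^*=\bottom{A}$ yields $e(t)=d(e(b),e(v),e(c))=d(b,v,v)=b$; in particular $t\mathrel{\mu^*}e(t)$ and $e(t_1)=e(t_2)\iff t_1\mathrel{\mu^*}t_2$. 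Hence $g\colon e(T)\to o/\mu$, $g(e(t)):=f(t)$, is well defined by this equivalence and the constancy of $f$ on $\mu^*$-classes. I claim $g$ is congruence-preserving. Since its domain lies in $v/\mu$ and its image in $o/\mu$, it suffices to check preservation of congruences $\theta\leq\mu$. If $(e(t_1),e(t_2))\in\theta$, then from $t_i\mathrel{\mu^*}e(t_i)$ we obtain $(t_1,t_2)\in\theta\vee\mu^*$, so compatibility of $f$ gives $(f(t_1),f(t_2))\in\theta\vee\mu^*$; as $(f(t_1),f(t_2))\in\mu$, modularity yields $(f(t_1),f(t_2))\in(\theta\vee\mu^*)\wedge\mu=\theta\vee(\mu^*\wedge\mu)=\theta$.

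Finally I would transport the target class onto the source class and invoke the module picture. By Theorem~\ref{teor:ABp_Implies_polynomial_equivalence} both $v/\mu$ and $o/\mu$ are minimal sets of the tame quotient $\quotienttct{\bottom{A}}{\mu}$ (Lemma~\ref{lemma:eq_24_25_26_from_IdzSlo01}), hence polynomially isomorphic \cite{HobbMcK}: there is a polynomial bijection $\tau\colon v/\mu\to o/\mu$ whose inverse is again a polynomial restriction. Then $\tau^{-1}\circ g$ is a congruence-preserving partial self-map of $v/\mu$, and by Corollary~\ref{cor.caratterizzazione_funzioni_compatibili_in_una_medesima_classe} the map $\epsilon_\mu^v\circ(\tau^{-1}\circ g)\circ(\epsilon_\mu^v)^{-1}$ is a congruence-preserving partial function of the $\GF{2}$-vector space $\Z_2^h$. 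A short argument using one-dimensional subspaces over $\GF{2}$ (which force $F(x)-F(y)\in\{0,x-y\}$ for all $x,y$ in the domain) shows that every such partial function is either constant or a translation $x\mapsto x+c$, and both are polynomial functions of the module; pulling back, $\tau^{-1}\circ g$ is interpolated on $e(T)$ by a polynomial $q_0$ of $\ab{A}$ with $q_0(v/\mu)\subseteq v/\mu$. Setting $p:=e_\mu^o\circ\tau\circ q_0\circ e_\mu^v$ yields a polynomial of $\ab{A}$ with $p(A)\subseteq o/\mu$ and, for $t\in T$, $p(t)=e_\mu^o(\tau(q_0(e(t))))=e_\mu^o(\tau(\tau^{-1}(f(t))))=e_\mu^o(f(t))=f(t)$, as required.

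The main obstacle is the congruence-preservation of $g$: it hinges on identifying $e_\mu^v$ with the projection along $\mu^*$ (so that $t\mathrel{\mu^*}e(t)$), which rests on $[\mu,\mu^*]=\bottom{A}$ and Lemma~\ref{lemma:ex_citation_to_prop2.6Aic06}, and on the modular cancellation $(\theta\vee\mu^*)\wedge\mu=\theta$ that descends the compatibility of $f$ from $\theta\vee\mu^*$ back to $\theta$. The cross-class step—realigning $v/\mu$ and $o/\mu$ through a polynomial isomorphism of minimal sets—is the other point requiring care, as it is precisely what reduces the genuinely two-class interpolation to the single-class statement of Corollary~\ref{cor.caratterizzazione_funzioni_compatibili_in_una_medesima_classe}.
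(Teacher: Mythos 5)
Your argument is correct and follows essentially the same route as the paper's proof: both reduce $f$ to a congruence-preserving map from a subset of $v/\mu$ to $o/\mu$ (your $g$ is the paper's $f_1$, which you realise via the idempotent $e_\mu^v$ and Lemma~\ref{lemma:ex_citation_to_prop2.6Aic06} rather than as a relation whose functionality is checked by hand), establish its compatibility through the modular cancellation $(\theta\vee\mu^*)\wedge\mu=\theta$, transport it to $v/\mu$ by the polynomial isomorphism of $\quotienttct{\bottom{A}}{\mu}$-minimal sets, and interpolate in the $\GF{2}$-vector space. The only variations are cosmetic: you prove the vector-space interpolation step directly where the paper cites \cite[Proposition~8.2]{AicIdz04}, and precomposing with $e_\mu^v$ lets you verify $p(t)=f(t)$ by direct computation instead of the paper's closing $\mu\wedge\mu^*=\bottom{A}$ argument.
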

\begin{proof}
First, we observe that 
if $\card{o/\mu}=1$, then the constant polynomial function 
with constant value $o$ interpolates $f$ on $T$ and has 
image contained in $o/\mu$. 
In the remainder of the proof we consider the case that 
$\card{o/\mu}>1$. 
Let us define $f_1\subseteq (v/\mu)\times (o/\mu)$ as follows:
\[
f_1=\{(u, f(d(u,v,u^*))\mid u\in v/\mu\text{ and }\exists u^*\in v/\mu^*\colon d(u,v,u^*)\in T\}.
\]
Next, we prove that $f_1$ is functional. To this end,
let $u\in v/\mu$ and let $a^*, b^*\in v/\mu^*$ be such that
$\{d(u,v,a^*),d(u,v,b^*)\}\subseteq T$. 
Then, since $f$ is congruence-preserving, 
we have $f(d(u,v,a^*))\mathrel{\mu^*}f(d(u,v,b^*))$,
and since $f(T)\subseteq o/\mu$, we have
$f(d(u,v,a^*))\mathrel{\mu\wedge\mu^*}f(d(u,v,b^*))$.
Hence Lemma~\ref{lemma:whymumeetmustariszero} yields 
$f(d(u,v,a^*))=f(d(u,v,b^*))$.

Next, we show that $f_1$ is congruence-preserving. 
To this end, let $\theta\in\Con\ab{A}$, and 
let $(u_1, u_2)\in \theta$ with $u_1,u_2$ in
the domain of $f_1$. We show that $f_1(u_1)\mathrel{\theta}f_1(u_2)$.
Let us define $\tilde{\theta}:=\Cg{\{(u_1, u_2)\}}$. 
Since $u_1$ and $u_2$ are in the domain of $f_1$,
there exist $u_1^*, u_2^*\in A$ such that $d(u_1, v, u_1^*)$ and
$d(u_2, v, u_2^*)$ belong to $T$. 
Moreover, we have 
$f_1(u_1)=f(d(u_1, v, u_1^*))$ and $f_1(u_2)=f(d(u_2, v, u_2^*))$.
Furthermore, we have 
\[
d(u_1, v, u_1^*)\mathrel{\tilde{\theta}}d(u_2, v, u_1^*)\mathrel{\mu^*}d(u_2, v, u_2^*).
\]
Thus, $d(u_1, v, u_1^*)\mathrel{\tilde{\theta}\vee\mu^*}d(u_2, v, u_2^*)$,
and therefore, since $f$ is congruence-preserving, we have
$f_1(u_1)\mathrel{\tilde{\theta}\vee\mu^*}f_1(u_2)$. 
Since $f(T)\subseteq o/\mu$ we infer that 
$f_1(u_1)\mathrel{(\tilde{\theta}\vee\mu^*)\wedge\mu}f_1(u_2)$.
Since $\tilde{\theta}\leq\mu$ and $\Con\ab{A}$ is modular,
Lemma~\ref{lemma:whymumeetmustariszero} yields
\[(\tilde{\theta}\vee\mu^*)\wedge\mu=\tilde{\theta}\vee(\mu^*\wedge \mu)=
\tilde{\theta}\vee\bottom{A}=\tilde{\theta}\subseteq \theta.\]
Thus, $f_1(u_1)\mathrel{\theta}f_1(u_2)$.

Next, we show that there exists $p\in\POL\ari{1}\ab{A}$
that interpolates $f_1$ on its domain and
has image contained in $o/\mu$.
We split the proof into two cases:\\
\textbf{Case 1}: \emph{$\card{v/\mu}=1$}: In this case
the constant function with constant value $f_1(v)$ 
interpolates $f_1$ on its domain and has image contained in
$o/\mu$. \\
\textbf{Case 2}: \emph{$\card{v/\mu}>1$}: In this case 
$o/\mu$ and $v/\mu$ are $\quotienttct{\bottom{A}}{\mu}$-minimal
sets by Theorem~\ref{teor:ABp_Implies_polynomial_equivalence}
and therefore, $o/\mu$ and $v/\mu$ are
polynomially isomorphic by \cite[Theorem~2.8(1)]{HobbMcK}.
Hence there exist $t_{v \rightarrow o}, t_{o \rightarrow v}\in \POL\ari{1}\ab{A}$,
such that $t_{v \rightarrow o}\restrict{v/\mu}$ is 
an isomorphism between
$\ab{A}\restrict{v/\mu}$  
and $\ab{A}\restrict{o/\mu}$, and
$(t_{v \rightarrow o}\restrict{v/\mu})^{-1}=
t_{o \rightarrow v}\restrict{o/\mu}$. 
Let $T_1$ be the domain of $f_1$, and let 
$f_2:=t_{o\rightarrow v}\circ f_1$.
Then $f_2$ is a partial 
congruence-preserving function 
on $\ab{A}\restrict{v/\mu}$. 
By Proposition~\ref{prop:homogenuityandcolpementation}\eqref{item:simplecomplmodlatticebelowhom}
the interval $\interval{\bottom{A}}{\mu}$ is
a simple complemented modular lattice. 
Since, by Corollary~\ref{cor:introducing_ABp_field_abelian_congruences}
and Lemma~\ref{lemma:piu_meno_fanno_modulo_su-anello_polinomi_ristretto_nella_classe_congruenza},
$\ab{A}\restrict{v/\mu}$ is polynomially equivalent 
to a $h$ dimensional vector space over $\GF{2}$,
by \cite[Proposition~8.2]{AicIdz04}
there exists a unary polynomial function $q$ 
such that $q(x)=f_2(x)$ for every $x\in T_1$.
Then for each $x\in T_1$ we have that
\[
t_{v\rightarrow o} (q(x))=t_{v\rightarrow o} (f_2(x))=t_{v\rightarrow o}( t_{o\rightarrow v} (f_1(x)))=f_1(x).
\]
Thus, $t_{v\rightarrow o} \circ q$ interpolates $f_1$ on $T_1$.
Let $e_\mu^o$ be the idempotent polynomial constructed in
Theorem~\ref{teor:existence_of_idempotent_class_homogeneous}.
Then $e_\mu^o\circ t_{v\rightarrow o}\circ q$ interpolates 
$f_1$ on its domain 
and has image contained in $o/\mu$.

Thus, we can conclude that
there exists a polynomial function $p$
that interpolates $f_1$ on its domain 
and has image contained in $o/\mu$. 
Next, we show that for all $t\in T$ we have $f(t)=p(t)$.
To this end, let us fix $t\in T$. 
Since Lemma~\ref{lemma:whymumeetmustariszero}
and 
Proposition~\ref{prop:if_an_homogeneues_congruence_is_not_neutral_then_it_is_abelian}
imply
$\interval{\bottom{A}}{\mu}
\nearrow \interval{\mu^*}{(\bottom{A}:\mu)}$,
Lemma~\ref{lemma:conseguenze_permutabilita_intervalli_proiettivi_esistenza,d(b,o,c)}
yields that 
there exist $u,u^*\in A$ such that $t=d(u, v,u^*)$, $u\in v/\mu$,
and $u^*\in v/\mu^*$. 
Thus, we have 
\[
f(t)=f(d(u,v,u^*))=f_1(u)=p(u).
\]
Since $u=d(u,v,v)$ and $u^*\mathrel{\mu^*} v$, we have 
$p(u)\mathrel{\mu^*}p(d(u,v,u^*))$,
and therefore, since $t\mathrel{\mu^*} u$, we have
$f(t)\mathrel{\mu^*}p(t)$. Since $f(t)\mathrel{\mu}o$
and $p(A)\subseteq o/\mu$,
we have that $f(t)\mathrel{\mu}p(t)$. Thus, 
Lemma~\ref{lemma:whymumeetmustariszero}
yields that $f(t)=p(t)$. 
\end{proof}

\begin{proposition}\label{prop:interpolating_partial_functions_inside_a_congruence_class_abelian_case_with_AB2}
Let $\ab{A}$ be a finite Mal'cev algebra with (SC1),
let $\mu$ be a homogeneous abelian congruence of $\ab{A}$
such that $(\ab{A},\mu)$ satisfies (AB2),
let $o\in A$, let $T\subseteq A$, and let $f\colon T\to o/\mu$
be a congruence-preserving function. Then there exists $p\in\POL\ari{1}\ab{A}$
whose image is contained in $o/\mu$
that interpolates $f$ on $T$. 
\end{proposition}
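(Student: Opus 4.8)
The plan is to reduce the statement to the two preceding propositions, which between them already handle the two orthogonal directions of the problem: Proposition~\ref{prop:qui_si_interpola_nel_caso_abeliano_le_funzioni_compatibili_definite_sui_coset_del_centralizzatore} interpolates $f$ on a single $(\bottom{A}:\mu)$-class, while Proposition~\ref{prop:interpolating_on_cosets_abelian_case} (specialized to $k=1$) glues such per-class interpolants into one polynomial defined on all of $T$. The first thing I would record is that, since $\ab{A}$ has (SC1) and $\mu$ is homogeneous, Proposition~\ref{prop:the_centralizer_of_a_homogeneous_congruence} gives $\Phi(\mu)=\bottom{A}$, so that the congruence $(\Phi(\mu):\mu)$ appearing in Proposition~\ref{prop:interpolating_on_cosets_abelian_case} is exactly the centralizer $(\bottom{A}:\mu)$ that indexes the cosets in Proposition~\ref{prop:qui_si_interpola_nel_caso_abeliano_le_funzioni_compatibili_definite_sui_coset_del_centralizzatore}. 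This alignment of the two partitions is precisely what lets the two results compose.

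Concretely, I would fix a transversal of the $(\bottom{A}:\mu)$-classes and, for each representative $v$, consider the restriction $f\restrict{T\cap (v/(\bottom{A}:\mu))}$. This restriction is again congruence-preserving (a restriction of a congruence-preserving map is congruence-preserving) and still has image contained in $o/\mu$, so it satisfies the hypotheses of Proposition~\ref{prop:qui_si_interpola_nel_caso_abeliano_le_funzioni_compatibili_definite_sui_coset_del_centralizzatore}. That proposition then yields a polynomial $p_v\in\POL\ari{1}\ab{A}$ with image contained in $o/\mu$ that agrees with $f$ on $T\cap (v/(\bottom{A}:\mu))$. These $p_v$ are exactly the per-coset interpolants demanded by item~(2) of Proposition~\ref{prop:interpolating_on_cosets_abelian_case} with $k=1$, once one observes that $(\Phi(\mu):\mu)^1=(\bottom{A}:\mu)$ on $A$.

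Finally I would invoke the implication (2)$\Rightarrow$(1) of Proposition~\ref{prop:interpolating_on_cosets_abelian_case}, applied to the (finite, as $\ab{A}$ is finite) set $T\subseteq A$ and the map $f\colon T\to o/\mu$, to produce a single $p\in\POL\ari{1}\ab{A}$ with $p(A)\subseteq o/\mu$ and $p(t)=f(t)$ for all $t\in T$. This is exactly the asserted conclusion.

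Since both of the heavy results are already established, I do not expect a genuine obstacle; the only point demanding care is the bookkeeping that guarantees the two propositions speak about the same partition, namely the identity $(\Phi(\mu):\mu)=(\bottom{A}:\mu)$ forced by $\Phi(\mu)=\bottom{A}$. It is worth noting that the image restriction $p(A)\subseteq o/\mu$ in the conclusion is supplied directly by item~(1) of Proposition~\ref{prop:interpolating_on_cosets_abelian_case}, so no separate argument for it is needed; the role of the $p_v$ having image in $o/\mu$ is merely to make the hypotheses of the two propositions match cleanly.
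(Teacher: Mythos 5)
Your proposal is correct and follows essentially the same route as the paper: establish $\Phi(\mu)=\bottom{A}$ so that $(\Phi(\mu):\mu)=(\bottom{A}:\mu)$, apply Proposition~\ref{prop:qui_si_interpola_nel_caso_abeliano_le_funzioni_compatibili_definite_sui_coset_del_centralizzatore} on each $(\bottom{A}:\mu)$-coset of $T$, and glue the resulting interpolants via the implication (2)$\Rightarrow$(1) of Proposition~\ref{prop:interpolating_on_cosets_abelian_case} with $k=1$. The paper's proof is the same two-step composition, differing only in that it also records the identity $(\bottom{A}:\mu)=\mu\vee\mu^*$, which is not needed for the gluing itself.
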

\begin{proof}
By Proposition~\ref{prop:the_centralizer_of_a_homogeneous_congruence}
and 
Proposition~\ref{prop:if_an_homogeneues_congruence_is_not_neutral_then_it_is_abelian},
we have that $\Phi(\mu)=\bottom{A}$ and  $(\bottom{A}:\mu)=(\Phi(\mu):\mu)=\mu\vee \mu^*$.
For each element $v/(\bottom{A}:\mu)$ of $T/(\bottom{A}:\mu)$
Proposition~\ref{prop:qui_si_interpola_nel_caso_abeliano_le_funzioni_compatibili_definite_sui_coset_del_centralizzatore}
yields a polynomial with image contained in $o/\mu$
that interpolates $f$ on $v/(\bottom{A}:\mu)$.
Thus, Proposition~\ref{prop:interpolating_on_cosets_abelian_case}
yields the desired polynomial function. 
\end{proof}

\begin{proposition}\label{prop:interpolation_on_quotient_yields_interpolation}
Let $\ab{A}$ be a finite Mal'cev algebra with (SC1),
let $\mu$ be a homogeneous congruence of $\ab{A}$
such that $(\ab{A}, \mu)$ satisfies (AB2),
let $T$ be a finite subset of $A$ and let $f\colon T\to A$
be a partial congruence-preserving function that can be interpolated
modulo $\mu$ by a polynomial function of $\ab{A}$. 
Then, $f$ can be interpolated by a polynomial 
function of $\ab{A}$. 
\end{proposition}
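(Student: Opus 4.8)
The plan is to derive the statement from Lemma~\ref{lemma:fromcosetstotheinfinirtyandbeyond} applied with $\strset{R}:=\Con\ab{A}$ and $k=1$. Since every polynomial function preserves congruences, $\Con\ab{A}\subseteq\Inv\POL\ab{A}$, so the inclusion $\Con\ab{A}\subseteq\strset{R}\subseteq\Inv\POL\ab{A}$ demanded by the lemma holds. By hypothesis $f$ preserves $\Con\ab{A}$, and, again by hypothesis, there is a polynomial interpolating $f$ on $T$ modulo $\mu$; this is precisely the polynomial $p_0$ whose existence the lemma requires. Hence the only thing left to check is the standing assumption of Lemma~\ref{lemma:fromcosetstotheinfinirtyandbeyond}: that every unary partial congruence-preserving function with finite domain and image contained in a single $\mu$-class is interpolable by a polynomial function of $\ab{A}$.

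To verify this assumption, I would fix a congruence-preserving $g\colon S\to o/\mu$ with $S\subseteq A$ finite. Since $\mu$ is homogeneous, Proposition~\ref{prop:if_an_homogeneues_congruence_is_not_neutral_then_it_is_abelian} excludes the possibility $\bottom{A}<[\mu,\mu]<\mu$, leaving the clean dichotomy $[\mu,\mu]=\bottom{A}$ or $[\mu,\mu]=\mu$. In the first case $\mu$ is abelian and, as $(\ab{A},\mu)$ satisfies (AB2), Proposition~\ref{prop:interpolating_partial_functions_inside_a_congruence_class_abelian_case_with_AB2} supplies a polynomial with image contained in $o/\mu$ that interpolates $g$ on $S$. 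In the second case $\mu$ is non-abelian; since $\ab{A}$ is finite its congruence lattice has finite height, so the forward implication of Proposition~\ref{prop:interpolation_in_a_class_nonAbelia_case} yields such a polynomial as well. In both branches the standing assumption of Lemma~\ref{lemma:fromcosetstotheinfinirtyandbeyond} is confirmed.

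With the standing assumption in hand, Lemma~\ref{lemma:fromcosetstotheinfinirtyandbeyond} produces a polynomial $p\in\POL\ari{1}\ab{A}$ that interpolates $f$ on $T$, which is exactly the desired conclusion.

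I expect no genuine obstacle here: the substantive interpolation work has already been carried out in the two preceding propositions and in Lemma~\ref{lemma:fromcosetstotheinfinirtyandbeyond}, and the present statement is essentially their assembly. The only delicate point is organizing the case distinction so that each branch meets the exact hypotheses of the proposition it invokes---in particular, confirming that the abelian/non-abelian split is exhaustive via Proposition~\ref{prop:if_an_homogeneues_congruence_is_not_neutral_then_it_is_abelian}, that (AB2) is consumed only in the abelian branch, and that finiteness (hence finite congruence height) is what makes the non-abelian branch applicable.
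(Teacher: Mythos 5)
Your proposal is correct and follows essentially the same route as the paper: both reduce the statement to Lemma~\ref{lemma:fromcosetstotheinfinirtyandbeyond} with $\strset{R}=\Con\ab{A}$ and verify its standing assumption by the dichotomy $[\mu,\mu]=\mu$ versus $[\mu,\mu]=\bottom{A}$ (justified via Proposition~\ref{prop:if_an_homogeneues_congruence_is_not_neutral_then_it_is_abelian}), invoking Proposition~\ref{prop:interpolation_in_a_class_nonAbelia_case} in the non-abelian case and Proposition~\ref{prop:interpolating_partial_functions_inside_a_congruence_class_abelian_case_with_AB2} in the abelian one.
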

\begin{proof}
We first prove that every partial function that preserves
the congruences of $\ab{A}$
and has image contained in one $\mu$-class
can be interpolated by a polynomial function of $\ab{A}$
whose image is contained in the same $\mu$-class.
We split the proof into two cases:\\
\textbf{Case 1}: $[\mu,\mu]=\mu$: In this case we can apply 
Proposition~\ref{prop:interpolation_in_a_class_nonAbelia_case}
to construct the interpolating polynomial function.\\
\textbf{Case 2}: $[\mu,\mu]\neq\mu$: Then 
Proposition~\ref{prop:if_an_homogeneues_congruence_is_not_neutral_then_it_is_abelian}
yields that $\mu$ is abelian. 
Thus,
Proposition~\ref{prop:interpolating_partial_functions_inside_a_congruence_class_abelian_case_with_AB2}
yields that any congruence-preserving partial function whose domain is 
contained in one $\mu$-class
can be interpolated by a polynomial function
whose image is contained in the
same $\mu$-class. 

Thus, the assumptions of Lemma~\ref{lemma:fromcosetstotheinfinirtyandbeyond}
are satisfied, and $f$ can be interpolated by a polynomial function. 
\end{proof}
\begin{lemma}\label{lemma:SC1andAB2carryovertoquotients}
Let $\ab{A}$ be a  finite Mal'cev with at least two elements
that satisfies (AB2) and (SC1), and let $\mu$ be 
a homogeneous congruence of $\ab{A}$.
Then $\ab{A}/\mu$ satisfies (AB2) and (SC1).
\end{lemma}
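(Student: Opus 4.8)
The plan is to reduce the whole statement to the correspondence between $\Con(\ab{A}/\mu)$ and the interval $\interval{\mu}{\uno{A}}$ of $\Con\ab{A}$: both (SC1) and (AB2) are conditions quantified over strictly meet irreducible congruences, respectively abelian prime quotients, and under the correspondence these become exactly the ones lying above $\mu$ in $\ab{A}$. I would first recall that $\theta\mapsto\theta/\mu$ is a lattice isomorphism from $\interval{\mu}{\uno{A}}$ onto $\Con(\ab{A}/\mu)$, and that the Mal'cev polynomial makes $\ab{A}$ (and hence $\ab{A}/\mu$) congruence permutable, so that $\Con\ab{A}$ is modular and Proposition~\ref{prop:commutator_lattice} applies. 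The two facts that drive everything are the quotient formulas for the commutator and the centralizer: for $\alpha,\beta\geq\mu$,
\[
[\alpha/\mu,\beta/\mu]=([\alpha,\beta]\vee\mu)/\mu ,
\]
and, since for $\eta\geq\mu$ the inequality $[\eta/\mu,\beta/\mu]\leq\alpha/\mu$ is then equivalent to $[\eta,\beta]\leq\alpha$ while $(\alpha:\beta)\geq\alpha\geq\mu$, also
\[
(\alpha/\mu:\beta/\mu)=(\alpha:\beta)/\mu .
\]

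For (SC1), I would take $\bar\nu=\nu/\mu$ strictly meet irreducible in $\Con(\ab{A}/\mu)$ with $\nu\geq\mu$. Since $\nu\geq\mu$, every cover of $\nu$ in $\Con\ab{A}$ already lies in $\interval{\mu}{\uno{A}}$ and hence corresponds to a cover of $\bar\nu$; thus $\nu$ is strictly meet irreducible in $\Con\ab{A}$ and $\bar\nu^+=\nu^+/\mu$. The centralizer formula and (SC1) for $\ab{A}$ then give
\[
(\bar\nu:\bar\nu^+)=(\nu:\nu^+)/\mu\leq\nu^+/\mu=\bar\nu^+ ,
\]
which is (SC1) for $\ab{A}/\mu$.

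For (AB2), I would take an abelian prime quotient $\bar\alpha\prec\bar\beta$ of $\Con(\ab{A}/\mu)$ and write $\bar\alpha=\alpha/\mu$, $\bar\beta=\beta/\mu$ with $\mu\leq\alpha\prec\beta$. By the commutator formula $[\bar\beta,\bar\beta]\leq\bar\alpha$ holds iff $[\beta,\beta]\leq\alpha$, so $\alpha\prec\beta$ is an abelian prime quotient of $\Con\ab{A}$. Fixing $\bar a=a/\mu$, the point is that because $\mu\leq\alpha$ every $\alpha$-class is a union of $\mu$-classes, so sending the $\alpha$-class of $b$ (for $b\in a/\beta$) to the $\bar\alpha$-class of $b/\mu$ is a bijection from the $\alpha$-classes contained in $a/\beta$ onto the $\bar\alpha$-classes contained in $\bar a/\bar\beta$. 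Hence
\[
\card{(\bar a/\bar\alpha)/(\bar\beta/\bar\alpha)}=\card{(a/\alpha)/(\beta/\alpha)}\in\{1,2\}
\]
by (AB2) for $\ab{A}$.

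The one genuinely algebraic ingredient is the commutator quotient identity $[\alpha/\mu,\beta/\mu]=([\alpha,\beta]\vee\mu)/\mu$; the hard part will be to justify it inside the commutator lattice framework of Proposition~\ref{prop:commutator_lattice} rather than by appealing to a full congruence modular variety, since once it is available the centralizer identity and the transfer of both properties follow formally. I would also note that homogeneity of $\mu$ is not in fact used here; it is simply the setting in which the lemma is applied.
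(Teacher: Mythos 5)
Your proposal is correct and follows essentially the same route as the paper: the paper likewise asserts that (SC1) passes to quotients, and for (AB2) it pulls back an abelian prime quotient using the fact that $[\beta/\mu,\beta/\mu]\leq\alpha/\mu$ implies $[\beta,\beta]\leq\alpha$ (citing \cite[Remark~4.6]{FreMcK87}) and then counts $\alpha$-classes inside a $\beta$-class via the correspondence theorem, exactly as you do. The commutator quotient identity you single out as the remaining work is precisely what the paper gets from that citation, and your observation that the homogeneity of $\mu$ is never used is consistent with the paper's own proof.
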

\begin{proof}
Since (SC1) carries over to quotients we have that 
$\ab{A}/\mu$ satisfies (SC1). Furthermore, 
let $\alpha', \beta'\in \Con\ab{A}/\mu$ with
$\alpha'\prec\beta'$ and $[\beta', \beta']\leq \alpha'$.
Then there exists $\alpha, \beta\in \Con\ab{A}$
such that $\alpha'=\alpha/\mu$, and $\beta'=\beta/\mu$. 
Moreover, if $[\beta/\mu,\beta/\mu]\leq\alpha/\mu$
we have that $[\beta, \beta]\leq\alpha$
(cf.~\cite[Remark~4.6]{FreMcK87}).
Thus, each $\beta/\alpha$-class has at most two elements,
and therefore so does each $\beta'/\alpha'$-class.
\end{proof}
\begin{proof}[Proof of Theorem \ref{teor.main_theorem_sc1_ab2_imply_compl}]
We proceed by induction on $\card{A}$. If $\card{A}=1$, then 
$\ab{A}$ is strictly 1-affine complete.
Let us assume that $\card{A}>1$. 
Then Proposition~\ref{prop:SC1impliesAPMI} implies that $\ab{A}$ has (APMI);
Proposition~\ref{prop:homogenuityandcolpementation} yields that $\ab{A}$ 
has a homogeneous congruence $\mu$, and
Lemma~\ref{lemma:SC1andAB2carryovertoquotients}
implies that $\ab{A}/\mu$ satisfies (SC1) and 
(AB$2$). Thus, by the induction hypothesis $\ab{A}/\mu$ 
is strictly 1-affine complete.
Let $f\colon T\to A$ be a partial congruence-preserving function.
Since $\ab{A}/\mu$ is strictly 1-affine complete,
$f$ can be interpolated modulo $\mu$ by a polynomial
function of $\ab{A}$.
Furthermore, $(\ab{A}, \mu)$ satisfies (AB2).
Thus, 
Proposition~\ref{prop:interpolation_on_quotient_yields_interpolation} yields
that $f$ can be interpolated on $T$ by a polynomial function of $\ab{A}$.
\end{proof}

\section{Strictly 1-affine complete congruence regular Mal'cev algebras}\label{sec:concluding_remarks}
In this section we investigate necessary conditions 
for strict 1-affine completeness.
In particular, we will prove a  characterization 
of strictly 1-affine complete congruence regular
finite Mal'cev algebras, and 
of finite loops. 
We start with the following straightforward generalization of 
\cite[Proposition~4.5]{AicIdz04} from
expanded groups to Mal'cev algebras. 
\begin{proposition}\label{prop:AB2_necessaria}
Let $\ab{A}$ be a finite Mal'cev algebra, and 
let us assume that there exists a pair of congruences $\alpha, \beta$
with $\alpha\prec\beta$ and $[\beta,\beta]\leq \alpha$ and
there exists $o\in A$ such that  $\card{(o/\alpha)/(\beta/\alpha)}> 2$.
Then $\ab{A}$ is not strictly 1-affine complete.
\end{proposition}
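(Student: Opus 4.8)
We must show that if $\ab{A}$ has an abelian prime quotient $\quotienttct{\alpha}{\beta}$ with some $\beta$-class splitting into more than two $\alpha$-classes, then $\ab{A}$ fails to be strictly $1$-affine complete. The plan is to exhibit a concrete unary partial congruence-preserving function that cannot be interpolated by any polynomial of $\ab{A}$. The natural place to look is inside a single $\beta$-class: by Lemma~\ref{lemma:piu_meno_fanno_modulo_su-anello_polinomi_ristretto_nella_classe_congruenza} (applied with $\beta$ in place of $\alpha$ and noting $[\beta,\beta]\leq\alpha$, so that modulo $\alpha$ the restriction is abelian), the algebra $\ab{A}\restrict{o/\beta}$ modulo $\alpha$ carries the structure of a module, and the quotient $(o/\alpha)/(\beta/\alpha)$ is an abelian group with at least three elements.

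\textbf{Construction of the bad function.} First I would pass to the quotient $\ab{A}/\alpha$: a partial function that is congruence-preserving and not interpolable on $\ab{A}/\alpha$ lifts to one with the same defect on $\ab{A}$, so we may assume $\alpha=\bottom{A}$ and work with the abelian atom $\beta$ sitting above $\bottom{A}$, where $\card{o/\beta}>2$. By Lemma~\ref{lemma:piu_meno_fanno_modulo_su-anello_polinomi_ristretto_nella_classe_congruenza} the induced algebra $\ab{A}\restrict{o/\beta}$ is polynomially equivalent to an $\ab{R}_o$-module $(o/\beta;+_o)$ of size at least three. On such a module I would define a partial function $f$ on a two- or three-element subset $T\subseteq o/\beta$ that preserves all congruences but whose value is forced outside the range of any single $R_o$-action. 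The standard choice (this is exactly the module-theoretic core of \cite[Proposition~4.5]{AicIdz04}) is to pick $a\in o/\beta\setminus\{o\}$, set $T=\{o,a\}$, and define $f(o)=o$, $f(a)=b$ for a suitable $b$; congruence-preservation on a module amounts to $f$ being compatible with every submodule, which for this two-point domain reduces to the easily-checked condition that $b$ lies in the submodule generated by $a$.

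\textbf{The obstruction to interpolation.} The heart of the argument is showing no polynomial can interpolate $f$. A polynomial of $\ab{A}$ restricted to $o/\beta$ and reduced modulo the module structure acts as $x\mapsto r\cdot x + c$ for some $r\in R_o$ and constant $c$. Requiring it to send $o\mapsto o$ and $a\mapsto b$ pins down $c$ and constrains $r\cdot a$. The point is that when $\card{(o/\beta)}\geq 3$, I can choose $b$ so that the system ``$r\cdot a=b-c$ with $r\in R_o$'' has \emph{no} solution in $R_o$, even though $f$ remains congruence-preserving; this is possible precisely because in a module with more than two elements there exist compatible maps on a subset that are not restrictions of affine maps $x\mapsto rx+c$. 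The main obstacle, and the step requiring genuine care, is producing such a $b$ uniformly: I expect to invoke the module coordinatization to reduce to an explicit $\ab{R}_o$-module computation, and to verify both that $f$ preserves every congruence of $\ab{A}$ (equivalently, every submodule and every relation in $\Inv\POL\ab{A}$) and that no element of $R_o$ realizes the required scaling, exactly as in the expanded-group case of \cite[Proposition~4.5]{AicIdz04}.

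\textbf{Assembling the proof.} Finally I would note that a partial congruence-preserving function on $o/\beta$ is automatically a partial congruence-preserving function on all of $\ab{A}$, since congruences of $\ab{A}$ restrict to congruences of $\ab{A}\restrict{o/\beta}$ and the relevant compatibility is checked class-by-class; hence $f$ witnesses the failure of strict $1$-affine completeness of $\ab{A}$ itself. The contrapositive of this is Theorem~\ref{teor.main_theorem_sc1_ab2_imply_compl}'s converse direction, which is why the statement is phrased as a necessary condition (AB2). The only genuinely new work relative to \cite{AicIdz04} is replacing the group-theoretic bookkeeping with the Mal'cev module coordinatization of Section~\ref{sec:preliminary_on_abelian_congruences}, which makes the lift from $\ab{A}/\alpha$ and the reduction to a module computation routine.
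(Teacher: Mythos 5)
There is a genuine gap, and it is fatal to the construction at the heart of your plan. You propose a witness $f$ with two-element domain $T=\{o,a\}$, $f(o)=o$, $f(a)=b$, where $b$ lies in the submodule generated by $a$ (so that $f$ is congruence-preserving) but no $r\in R_o$ satisfies $r\cdot a=b$. These two requirements contradict each other: congruence preservation on a two-point domain means exactly $(o,b)=(f(o),f(a))\in\Cg{\{(o,a)\}}$, and by \cite[Theorem~4.70(iii)]{McKMcnTay88} this is \emph{equivalent}, in a Mal'cev algebra, to the existence of a unary polynomial $p$ with $p(o)=o$ and $p(a)=b$ (equivalently, of $r\in R_o$ with $r\cdot a=b$). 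So \emph{every} unary congruence-preserving partial function with a two-element domain is interpolable, and no witness of the kind you describe exists. The obstruction to interpolation is not visible on two points; it only appears on a configuration where the affine structure of the abelian quotient is tested. The paper's proof uses the four-point domain $\{o,b_1,b_2,d(b_1,o,b_2)\}$ with $f$ equal to $o$ on the first three points and to $b_1$ on the fourth; the hypothesis $\card{(o/\alpha)/(\beta/\alpha)}>2$ is used precisely to choose $b_1,b_2$ with $b_2\notin o/\alpha$ and $b_2\notin d(o,b_1,o)/\alpha$, which guarantees that every pair of distinct domain points generates $\beta$ (so $f$ is congruence-preserving), while Lemma~\ref{lemma:ex_citation_to_prop2.6Aic06} forces any interpolating polynomial to satisfy $o=d(p(b_1),p(o),p(b_2))\equiv_{[\beta,\beta]}p(d(b_1,o,b_2))=b_1$ modulo $\alpha$, a contradiction.

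A second, independent gap is your opening reduction to $\ab{A}/\alpha$. A congruence-preserving, non-interpolable partial function on the quotient does not obviously lift: choosing representatives produces a function that respects congruences above $\alpha$, but there is no reason it should respect congruences $\theta$ of $\ab{A}$ with $\theta\ngeq\alpha$ (one only gets $(f(x),f(y))\in\theta\vee\alpha$). The paper avoids this entirely by working in $\ab{A}$ throughout and instead reduces, via transposition of intervals and Lemmata~\ref{lemma:exAic18Lemma3.4} and~\ref{lemma:ismorphisms_between_classes_same_element_projective_intervals}, to the case where $\beta$ is join irreducible and $\alpha=\beta^-$, so that $\beta$ is generated by any pair in $\beta\setminus\alpha$. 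That is the mechanism that makes the congruence-preservation check for the four-point function tractable, and it is the piece your proposal would still need even after repairing the choice of domain.
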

\begin{proof}
First, we show that we can assume without loss of generality that 
$\beta$ is join irreducible and $\alpha=\beta^-$. 
To this end, let $\tilde{\beta}$ be minimal with the property $\tilde{\beta}\leq \beta$
and $\tilde{\beta}\nleq \alpha$. Then $\tilde{\beta}$ is join irreducible. 
Let $\tilde{\alpha}=\tilde{\beta}^-$.
Since $\interval{\tilde{\alpha}}{\tilde{\beta}}\nearrow\interval{\alpha}{\beta}$, 
Lemma~\ref{lemma:ismorphisms_between_classes_same_element_projective_intervals} yields
$\card{(o/\tilde{\alpha})/(\tilde{\beta}/\tilde{\alpha})}> 2$,
and Lemma~\ref{lemma:exAic18Lemma3.4}
yields that $[\tilde{\beta},\tilde{\beta}]\leq\tilde{\alpha}$.

Seeking a contradiction, let us assume that $\ab{A}$ is strictly
1-affine complete. 
By assumption, there exist $b_1, b_2\in A$ such that $b_1, b_2\in o/\beta$,
$b_1\notin o/\alpha$, $b_2\notin o/\alpha$, and $b_2\notin d(o,b_1, o)/\alpha$. 
Let us define $f\colon\{o,b_1, b_2, d(b_1, o,b_2)\}\to \{o, b_1\}$
by $\{o,b_1, b_2\}\mapsto o$ and $d(b_1, o,b_2)\mapsto b_1$.
Next we prove that $f$ is congruence-preserving. 
We first show that 
\begin{equation}\label{eq:equazione_chi_genera_beta_proposizion_AB2_necessaria}
\beta=\Cg{\{(o,b_1)\}}=\Cg{\{(o,b_2)\}}=\Cg{\{(b_2,d(o,b_1, o))\}}.
\end{equation}
To this end, let $x,y\in A$ be such that $(x,y)\in \beta\setminus\alpha$. 
Then $\Cg{\{(x,y)\}}\leq \beta$ and $\Cg{\{(x,y)\}}\nleq \alpha$.
Since $\alpha=\beta^-$ we infer that $\Cg{\{(x,y)\}}= \beta$.
Hence \eqref{eq:equazione_chi_genera_beta_proposizion_AB2_necessaria}
follows.

In the following we will show that $\beta$ is generated by some other pairs 
of elements from $A$ (cf.~identities~\eqref{eq:equatione_b1_somma_proposizione_AB2_necessaria}, 
\eqref{eq:equatione_b2_somma_proposizione_AB2_necessaria},
\eqref{eq:equatione_o_somma_proposizione_AB2_necessaria}). This will imply that 
$f$ is a congruence-preserving unary partial function.

By Lemma~\ref{lemma:piu_emeno_fanno_gruppoo_abeliano_nella_classe_congruenza},
$(o/\alpha)/(\beta/\alpha)$ is an abelian group 
with respect to the operation 
$x/\alpha +y/\alpha= d(x,o,y)/\alpha$.
Next, we show that 
\begin{equation}\label{eq:equatione_b1_somma_proposizione_AB2_necessaria}
\beta=\Cg{\{(b_1, d(b_1, o,b_2))\}}.
\end{equation} 
Seeking a contradiction, let us assume 
that $\beta\neq\Cg{\{(b_1, d(b_1, o,b_2))\}}$. Then 
$(b_1, d(b_1, o,b_2))\in \alpha$ and therefore,
$b_1/\alpha=d(b_1, o,b_2)/\alpha= b_1/\alpha +b_2/\alpha$.
This implies that $b_2/\alpha=o/\alpha$ in contradiction with the
assumption $b_2\notin o/\alpha$. 
In a similar way one can prove that 
\begin{equation}\label{eq:equatione_b2_somma_proposizione_AB2_necessaria}
\beta=\Cg{\{(b_2, d(b_1, o,b_2))\}}.
\end{equation} 
Next, we show that
\begin{equation}\label{eq:equatione_o_somma_proposizione_AB2_necessaria}
\beta=\Cg{\{(o, d(b_1, o,b_2))\}}.
\end{equation} 
Seeking a contradiction, let us assume that 
$\beta\neq\Cg{\{(o, d(b_1, o,b_2))\}}$. Then 
$(o, d(b_1, o,b_2)\in \alpha$, and therefore, 
$o/\alpha= b_1/\alpha +b_2/\alpha$. Hence
$b_2/\alpha= -b_1/\alpha=d(o,b_1, o)/\alpha$, 
in contradiction with the assumption 
$b_2\notin d(o,b_1, o)/\alpha$. 

Since $(o,b_1)\in\beta$,
\eqref{eq:equatione_b1_somma_proposizione_AB2_necessaria},
\eqref{eq:equatione_b2_somma_proposizione_AB2_necessaria},
and 
\eqref{eq:equatione_o_somma_proposizione_AB2_necessaria} yield that
$f$ is congruence-preserving. 
Since $\ab{A}$ is strictly 1-affine complete, there exists $p\in\POL\ari{1}\ab{A}$
such that $p(\{o,b_1, b_2\})=\{o\}$ and $p(d(b_1, o,b_2))=b_1$. 
Thus, Lemma~\ref{lemma:ex_citation_to_prop2.6Aic06} yields
\[
o=d(o,o,o)=d(p(b_1), p(o), p(b_2))\mathrel{[\beta, \beta]} p(d(b_1, o, b_2))=b_1.
\]
Hence $(o, b_1)\in \alpha$, in contradiction 
with the choice of $\alpha$ and $\beta$. 
\end{proof}
We now prove a characterization of strictly 1-affine complete algebras 
among those Mal'cev algebras that satisfy (SC1). 
\begin{corollary}\label{cor:MalcevalgebrasSC1_which_one_are_s1ac}
Let $\ab{A}$ be a finite Mal'cev algebra with (SC1). Then the following are equivalent:
\begin{enumerate}
\item $\ab{A}$ is strictly 1-affine complete;\label{item:s1ac:cor:MalcevalgebrasSC1_which_one_are_s1ac}
\item $\ab{A}$ satisfies (AB2).
\label{item:ab2:cor:MalcevalgebrasSC1_which_one_are_s1ac}
\end{enumerate}
\end{corollary}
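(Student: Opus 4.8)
The plan is to derive the corollary directly from the two principal results already in hand, namely Theorem~\ref{teor.main_theorem_sc1_ab2_imply_compl} and Proposition~\ref{prop:AB2_necessaria}; no further machinery should be needed, and the only real content is to check that the two directions fit exactly the hypotheses and conclusions of those statements.

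For the implication \eqref{item:ab2:cor:MalcevalgebrasSC1_which_one_are_s1ac} $\Rightarrow$ \eqref{item:s1ac:cor:MalcevalgebrasSC1_which_one_are_s1ac} I would simply observe that $\ab{A}$ satisfies (SC1) by the standing hypothesis of the corollary and (AB2) by item~\eqref{item:ab2:cor:MalcevalgebrasSC1_which_one_are_s1ac}. These are precisely the assumptions of Theorem~\ref{teor.main_theorem_sc1_ab2_imply_compl}, so that theorem immediately yields that $\ab{A}$ is strictly 1-affine complete.

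For the converse \eqref{item:s1ac:cor:MalcevalgebrasSC1_which_one_are_s1ac} $\Rightarrow$ \eqref{item:ab2:cor:MalcevalgebrasSC1_which_one_are_s1ac} I would argue by contraposition, showing that the failure of (AB2) already obstructs strict 1-affine completeness. Assume $\ab{A}$ does not have (AB2). Unwinding Definition~\ref{def:definition_ABp} with $\gamma=\uno{A}$, this means there exist $\alpha,\beta\in\Con\ab{A}$ with $\alpha\prec\beta$ and $[\beta,\beta]\leq\alpha$, together with an element $o\in A$, for which $\card{(o/\alpha)/(\beta/\alpha)}\notin\{1,2\}$; since this cardinality is always at least $1$, it must in fact satisfy $\card{(o/\alpha)/(\beta/\alpha)}>2$. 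This is exactly the hypothesis of Proposition~\ref{prop:AB2_necessaria}, whose conclusion is that $\ab{A}$ is not strictly 1-affine complete. Taking the contrapositive gives the desired implication.

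I do not expect any genuine obstacle, as the corollary merely repackages the two theorems. The one point that needs care is the bookkeeping in the converse: verifying that the negation of (AB2) matches verbatim the hypothesis of Proposition~\ref{prop:AB2_necessaria}, and that this proposition remains applicable under the extra assumption (SC1) of the corollary. The latter is unproblematic, since Proposition~\ref{prop:AB2_necessaria} requires nothing beyond $\ab{A}$ being a finite Mal'cev algebra, so the additional hypothesis (SC1) causes no conflict.
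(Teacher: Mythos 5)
Your proof is correct and follows exactly the paper's own argument: Theorem~\ref{teor.main_theorem_sc1_ab2_imply_compl} gives the implication from (AB2) to strict 1-affine completeness, and the contrapositive of Proposition~\ref{prop:AB2_necessaria} gives the converse. The careful unwinding of the negation of (AB2) against the hypothesis of Proposition~\ref{prop:AB2_necessaria} is accurate, if slightly more detailed than the paper's two-line proof.
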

\begin{proof}
Proposition~\ref{prop:AB2_necessaria} yields that \eqref{item:s1ac:cor:MalcevalgebrasSC1_which_one_are_s1ac} 
implies \eqref{item:ab2:cor:MalcevalgebrasSC1_which_one_are_s1ac}.
Theorem~\ref{teor.main_theorem_sc1_ab2_imply_compl} yields that
\eqref{item:ab2:cor:MalcevalgebrasSC1_which_one_are_s1ac} implies 
\eqref{item:s1ac:cor:MalcevalgebrasSC1_which_one_are_s1ac}. 
\end{proof}

The following generalizes \cite[Proposition~3.3]{AicIdz04}
from finite expanded groups to finite Mal'cev algebras. 
The proof proposed in \cite{AicIdz04} makes no use of the 
fact that $\ab{A}$ is an expanded group. Thus, we omit the proof and 
refer the reader to the proof of~\cite[Proposition~3.3]{AicIdz04}.
\begin{lemma}[{\cite[Proposition~3.3]{AicIdz04}}]\label{lemma:prop3.3AicIdz04implicazione_da_due_a_uno}
Let $\ab{A}$ be a finite Mal'cev algebra 
Then the following are equivalent:
\begin{enumerate}
\item $\ab{A}$ does not satisfy (SC1);\label{item:A_doesn_satisfy_SC1}
\item there exist two join irreducible congruences $\alpha, \beta$ \label{item:the_failure_exists}
such that 
\begin{equation}\label{eq:failure_of_SC1}
[\alpha,\beta]\leq \alpha^-\prec \alpha\leq\beta^-\prec\beta.
\end{equation}
\end{enumerate}
\end{lemma}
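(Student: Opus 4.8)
The plan is to prove the equivalence by translating, in both directions, between the centralizer of a prime quotient sitting at a strictly meet irreducible congruence and the commutator condition read along a chain of join irreducibles. Throughout I would use that $\Con\ab{A}$ is modular of finite height, that the commutator satisfies $[\alpha,\beta]\le\alpha\wedge\beta$, is monotone and symmetric, and that $(\kappa:\lambda)$ is the largest $\eta$ with $[\eta,\lambda]\le\kappa$; I would also invoke Lemma~\ref{lemma:exAic18Lemma3.4}\eqref{item:centralizerinprojectiveintervals}, which guarantees that projective prime quotients have equal centralizers. Note that the whole argument lives in the congruence lattice, so it makes no use of a group reduct, matching the remark that the original proof generalizes verbatim.

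For \eqref{item:the_failure_exists}$\Rightarrow$\eqref{item:A_doesn_satisfy_SC1}, given join irreducibles with $[\alpha,\beta]\le\alpha^-\prec\alpha\le\beta^-\prec\beta$, I would first choose a strictly meet irreducible $\mu$ maximal with $\mu\ge\alpha^-$ and $\mu\ngeq\alpha$; this is the standard passage from a prime quotient to a meet irreducible one (cf.~\cite[Proposition~7.1]{AicMud09}), yielding $\interval{\alpha^-}{\alpha}\nearrow\interval{\mu}{\mu^+}$ with $\mu^+=\mu\vee\alpha$ and $\mu\wedge\alpha=\alpha^-$. By Lemma~\ref{lemma:exAic18Lemma3.4} we then have $(\mu:\mu^+)=(\alpha^-:\alpha)$, and since $[\alpha,\beta]\le\alpha^-$ means $\beta\le(\alpha^-:\alpha)$, it suffices to show $\beta\nleq\mu^+$. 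Here the modular law is the tool: from $\alpha\le\beta$ it gives $(\alpha\vee\mu)\wedge\beta=\alpha\vee(\mu\wedge\beta)$, so if $\beta\le\mu^+=\alpha\vee\mu$ the left-hand side equals $\beta$, while $\alpha\le\beta^-$ and $\mu\wedge\beta\le\beta^-$ force the right-hand side into $\beta^-$, contradicting $\beta^-\prec\beta$. Hence $(\mu:\mu^+)\ge\beta\nleq\mu^+$, so (SC1) fails at $\mu$.

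For \eqref{item:A_doesn_satisfy_SC1}$\Rightarrow$\eqref{item:the_failure_exists}, I would start from a strictly meet irreducible $\mu$ with $\gamma:=(\mu:\mu^+)\nleq\mu^+$; observe $\gamma\ge\mu$ since $[\mu,\mu^+]\le\mu$. As $\gamma\nleq\mu^+$, I would pick a join irreducible $\beta\le\gamma$ with $\beta\nleq\mu^+$ that is minimal with this property; minimality forces every join irreducible below $\beta^-$ to lie under $\mu^+$, hence $\beta^-\le\mu^+$. The crux is then to find a join irreducible $\alpha\le\beta^-$ with $\alpha\nleq\mu$: once it is found, $\alpha\le\mu^+$ together with $\beta\le\gamma$ gives $[\alpha,\beta]\le\alpha\wedge[\beta,\mu^+]\le\alpha\wedge\mu\le\alpha^-$, and $\alpha\le\beta^-\prec\beta$ completes the required chain $[\alpha,\beta]\le\alpha^-\prec\alpha\le\beta^-\prec\beta$. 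Such an $\alpha$ exists exactly when $\beta^-\nleq\mu$.

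The \textbf{main obstacle} is therefore to exclude the possibility $\beta^-\le\mu$, and I expect this to be the one delicate point; it is precisely where strict meet irreducibility of $\mu$ (uniqueness of the upper cover $\mu^+$) is indispensable. The plan is to argue by contradiction: if $\beta^-\le\mu$, then since $\beta\nleq\mu$ we obtain $\beta\wedge\mu=\beta^-$, so $\interval{\beta^-}{\beta}\nearrow\interval{\mu}{\mu\vee\beta}$ is again a prime quotient; thus $\mu\vee\beta$ covers $\mu$, and uniqueness of the cover forces $\mu\vee\beta=\mu^+$, whence $\beta\le\mu^+$, contradicting the choice of $\beta$. This rules out the bad case and yields the desired $\alpha$. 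The remaining steps — that the chosen maximal and minimal elements have the stated covering relations and that the displayed commutator inequalities hold — are routine computations in a finite modular commutator lattice.
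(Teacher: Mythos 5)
Your proof is correct, but there is nothing in the paper to compare it against: the paper explicitly omits the proof of this lemma, remarking only that the argument of \cite[Proposition~3.3]{AicIdz04} uses no group structure and referring the reader there. Your write-up therefore supplies a complete, self-contained, purely lattice-and-commutator-theoretic argument of exactly the kind the paper asserts exists. I checked the two delicate points and both are sound. In the direction \eqref{item:the_failure_exists}$\Rightarrow$\eqref{item:A_doesn_satisfy_SC1}, the modular-law computation $(\alpha\vee\mu)\wedge\beta=\alpha\vee(\mu\wedge\beta)$ does force a contradiction: $\beta\nleq\mu$ follows from $\alpha\leq\beta$ and $\mu\ngeq\alpha$, so $\mu\wedge\beta<\beta$ and hence $\mu\wedge\beta\leq\beta^-$ by join irreducibility of $\beta$, pushing the right-hand side below $\beta^-$ while the left-hand side would equal $\beta$ if $\beta\leq\mu^+$; you might spell out the step $\mu\wedge\beta\leq\beta^-$ explicitly. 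In the direction \eqref{item:A_doesn_satisfy_SC1}$\Rightarrow$\eqref{item:the_failure_exists}, the exclusion of $\beta^-\leq\mu$ via the transpose $\interval{\beta^-}{\beta}\nearrow\interval{\mu}{\mu\vee\beta}$ and the uniqueness of the upper cover of the strictly meet irreducible $\mu$ is correct and is indeed the only place where strict meet irreducibility is used. The remaining ingredients (the existence of a strictly meet irreducible $\mu$ with $\interval{\alpha^-}{\alpha}\nearrow\interval{\mu}{\mu^+}$, i.e.\ \cite[Proposition~7.1]{AicMud09}; $\beta^-\leq\mu^+$ from the minimality of $\beta$; and the commutator estimates via symmetry, monotonicity and Lemma~\ref{lemma:exAic18Lemma3.4}\eqref{item:centralizerinprojectiveintervals}) are all correctly deployed.
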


\begin{definition}
Let $\ab{A}$ be a finite Mal'cev algebra.
A pair $(\alpha, \beta)$ of join irreducible 
elements of $\Con\ab{A}$ is called 
\emph{a failure of (SC1)} if $\alpha$ and $\beta$
satisfy \eqref{eq:failure_of_SC1}.
\end{definition}
\begin{proposition}\label{prop:condition_on_sc1_failurs}
Let $\ab{A}$ be a finite strictly $1$-affine
complete Mal'cev algebra and let $(\alpha, \beta)$ be 
failure of (SC1). Then $\ab{A}$ satisfies (AB2) and
for all $(a_1, a_2, b)\in A^3$ we have that 
\[
\Cg{\{(a_1, a_2)\}}=\alpha\Rightarrow 
\Cg{\{(b, a_1)\}}\neq \beta.
\]
\end{proposition}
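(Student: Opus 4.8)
The plan is to argue by contradiction. Suppose $(\alpha,\beta)$ is a failure of (SC1), so by definition $[\alpha,\beta]\leq\alpha^-\prec\alpha\leq\beta^-\prec\beta$ with $\alpha,\beta$ join irreducible, and suppose there exist $(a_1,a_2,b)\in A^3$ with $\Cg{\{(a_1,a_2)\}}=\alpha$ and $\Cg{\{(b,a_1)\}}=\beta$. I want to build an explicit partial congruence-preserving function that cannot be interpolated, contradicting strict $1$-affine completeness. Let me write $d$ for the Mal'cev polynomial. The first step is to record the easy part: since $\ab{A}$ is strictly $1$-affine complete, Proposition~\ref{prop:AB2_necessaria} immediately gives that $\ab{A}$ satisfies (AB2), so the substance lies entirely in the implication $\Cg{\{(a_1,a_2)\}}=\alpha\Rightarrow\Cg{\{(b,a_1)\}}\neq\beta$.

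\textbf{Constructing the obstructing function.}
Assume for contradiction that $\Cg{\{(b,a_1)\}}=\beta$ as well. The idea, following the structure of \cite[Theorem~1.3]{AicIdz04}, is to exploit the fact that $[\alpha,\beta]\leq\alpha^-$ forces an affine-type relation among the relevant elements via Lemma~\ref{lemma:ex_citation_to_prop2.6Aic06}, while the generation hypotheses let me prescribe a compatible function freely on a small finite set. Concretely, I would consider the element $c:=d(b,a_1,a_2)$ and examine the small configuration $\{a_1,a_2,b,c\}$. Note $c\mathrel{\alpha}d(b,a_1,a_1)=b$ since $(a_1,a_2)\in\alpha$, so $c\in b/\alpha$; and $(b,a_1)\in\beta$ places everything inside a single $\beta$-class region. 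The plan is to define a partial function $f$ on this configuration that sends a designated pair witnessing the generation of $\alpha$ to a pair witnessing $\beta$, mirroring the role reversal in the definition of a failure of (SC1). I would verify that $f$ preserves all congruences by checking, for each generated principal congruence among the relevant pairs, that the images lie in the corresponding congruence — here the generation identities $\Cg{\{(a_1,a_2)\}}=\alpha$ and $\Cg{\{(b,a_1)\}}=\beta$ do the bookkeeping, exactly as equations~\eqref{eq:equatione_b1_somma_proposizione_AB2_necessaria}--\eqref{eq:equatione_o_somma_proposizione_AB2_necessaria} did in the proof of Proposition~\ref{prop:AB2_necessaria}.

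\textbf{Deriving the contradiction.}
Once $f$ is shown to be congruence-preserving, strict $1$-affine completeness supplies $p\in\POL\ari{1}\ab{A}$ interpolating $f$ on the configuration. The final step is to apply Lemma~\ref{lemma:ex_citation_to_prop2.6Aic06} with the commutator $[\alpha,\beta]$: because $p$ preserves $\alpha$ and $\beta$ and the elements are related by $\alpha$ and $\beta$ in the right pattern, the lemma forces the image pair to be congruent modulo $[\alpha,\beta]\leq\alpha^-$. But by construction $f$ (hence $p$) was chosen to separate these elements modulo $\alpha^-$ — the prescribed values land in distinct $\alpha^-$-classes, precisely because the failure configuration places $\alpha^-\prec\alpha$ strictly below. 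This contradiction establishes $\Cg{\{(b,a_1)\}}\neq\beta$.

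\textbf{Anticipated obstacle.}
The main difficulty I expect is arranging the finite configuration and the target values so that $f$ is simultaneously (i) congruence-preserving — which requires that every principal congruence generated by a pair in the domain is matched by the images — and (ii) in conflict with the commutator constraint $[\alpha,\beta]\leq\alpha^-$ forced by Lemma~\ref{lemma:ex_citation_to_prop2.6Aic06}. Getting both conditions to hold hinges on using the join irreducibility of $\alpha$ and $\beta$ together with the strictness of the covers $\alpha^-\prec\alpha$ and $\beta^-\prec\beta$; the subtle point is ensuring the generation hypotheses $\Cg{\{(a_1,a_2)\}}=\alpha$ and $\Cg{\{(b,a_1)\}}=\beta$ are genuinely used to close the compatibility check rather than assumed. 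I would carry out the explicit verification of congruence preservation pair-by-pair last, after fixing the combinatorics of the configuration.
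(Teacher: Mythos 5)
Your outline coincides with the paper's strategy: assume $\Cg{\{(a_1,a_2)\}}=\alpha$ and $\Cg{\{(b,a_1)\}}=\beta$, form the fourth point $c$ via the Mal'cev polynomial (the paper uses $c=d(a_2,a_1,b)$; your $d(b,a_1,a_2)$ plays the same role), define a partial function on $\{a_1,a_2,b,c\}$ that collapses $\{a_1,a_2,b\}$ and separates $c$, and then use Lemma~\ref{lemma:ex_citation_to_prop2.6Aic06} to force $(a_1,a_2)\in[\alpha,\beta]\leq\alpha^-$, contradicting $\Cg{\{(a_1,a_2)\}}=\alpha$. The final contradiction step and the reduction of (AB2) to Proposition~\ref{prop:AB2_necessaria} are correct as stated.

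However, there is a genuine gap at the central step: you never verify that $f$ is congruence-preserving, and the claim that ``the generation identities do the bookkeeping'' is not enough. The two hypotheses only control pairs inside $\{a_1,a_2,b\}$; since $f$ separates $c$ from the other three points (its image is a pair generating $\alpha$, not $\beta$ --- your description of $f$ as sending an $\alpha$-witness to a $\beta$-witness does not match what is needed), compatibility requires $\alpha\leq\Cg{\{(b,c)\}}$, $\alpha\leq\Cg{\{(a_1,c)\}}$ and $\alpha\leq\Cg{\{(a_2,c)\}}$, and none of these is automatic. The paper establishes them (its equations \eqref{eq:alpha=congruenzagenerata_d_b}--\eqref{eq:beta=congruenzagenerata_d_a}) by three separate applications of Lemma~\ref{lemma:ex_citation_to_prop2.6Aic06}, each time absorbing an error term lying in $[\alpha,\beta]\leq\alpha^-$ and then invoking the join irreducibility of $\alpha$, respectively of $\beta$, to pin down the principal congruence exactly; this is also precisely where the full strength of $\Cg{\{(b,a_1)\}}=\beta$ (rather than merely $(b,a_1)\in\beta\setminus\beta^-$) enters. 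Your appeal to the analogy with equations \eqref{eq:equatione_b1_somma_proposizione_AB2_necessaria}--\eqref{eq:equatione_o_somma_proposizione_AB2_necessaria} of Proposition~\ref{prop:AB2_necessaria} is misleading: there the verification ran through the abelian group structure on $(o/\alpha)/(\beta/\alpha)$, which was available because $[\beta,\beta]\leq\alpha$ was assumed; here only $[\alpha,\beta]\leq\alpha^-$ is known and no such group structure exists, so the computations must be done with the commutator lemma directly. Until these three identities are proved, the compatibility of $f$ --- the heart of the argument --- remains unestablished.
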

\begin{proof}
Clearly Proposition~\ref{prop:AB2_necessaria}
implies that $\ab{A}$ satisfies (AB2). 
Seeking a contradiction, let us suppose that
$(\alpha, \beta)$ is a failure of (SC1) and
there exist $(o,a, b)\in A^3$ such that
$\Cg{\{(o,a)\}}=\alpha$ and $\beta=\Cg{\{(o,b)\}}$. 
Then 
Lemma~\ref{lemma:ex_citation_to_prop2.6Aic06} implies that
\[
\begin{split}
o&=d(o,b,b)\equiv_{\Cg{\{(b, d(a,o,b)\}}} d(d(o,b,b),d(o,o,b),d(a,o,b))\\
&\equiv_{[\beta, \alpha]}d(d(o,o,a),d(b,o,o),d(b,b,b))\\
&=d(a,b,b)=a.
\end{split}
\]
Moreover, we have $b=d(o,o,b)\mathrel{\alpha}d(a,o,b)$.
Thus, we have 
\[
\alpha=\Cg{\{(o,a)\}}\leq \Cg{\{b, d(a,o,b)\}}\vee [\beta, \alpha]\leq \alpha.
\]
Since $\alpha$ is join irreducible we infer that 
\begin{equation}\label{eq:alpha=congruenzagenerata_d_b}
\alpha=\Cg{\{(b, d(a,o,b)\}}. 
\end{equation}
Moreover, we have 
\[
b=d(o,o,b)\mathrel{\alpha}d(a,o,b)\mathrel{\Cg{\{(o,d(a,o,b))\}}}o.
\]
Thus, since $o=d(a,a,o)\mathrel{\beta} d(a,o,b)$ we have 
\[
\beta=\Cg{\{(o,b)\}}\leq \alpha\vee \Cg{\{(o,d(a,o,b))\}}\leq \beta.
\]
Since $\beta$ is join irreducible, we have  
\begin{equation}\label{eq:beta=congruenzagenerata_d_o}
\beta=\Cg{\{(o, d(a,o,b)\}}. 
\end{equation}
Moreover, we have 
\[
o\mathrel{\alpha} a\mathrel{\Cg{\{(a, d(a,o,b))\}}}d(a,o,b)\mathrel{\alpha} d(a,a,b)=b.
\]
Thus, since $a=d(a,o,o)\mathrel{\beta} d(a,o,b)$ we have 
\[
\beta=\Cg{\{(o,b)\}}\leq \alpha\vee \Cg{\{(a,d(a,o,b))\}}\leq \beta.
\]
Since $\beta$ is join irreducible, we have  
\begin{equation}\label{eq:beta=congruenzagenerata_d_a}
\beta=\Cg{\{(a, d(a,o,b)\}}. 
\end{equation}
Putting together~\eqref{eq:alpha=congruenzagenerata_d_b},
\eqref{eq:beta=congruenzagenerata_d_o},
\eqref{eq:beta=congruenzagenerata_d_a}, 
and \eqref{eq:failure_of_SC1} we infer that
\[
(o,a)\in\Cg{\{(a, d(a,o,b)\}}\cap \Cg{\{(o, d(a,o,b)\}}\cap \Cg{\{(b, d(a,o,b)\}}.
\]
Thus, the partial function $f\colon \{a,o,b,d(a,o,b)\}\to \{a,o\}$ defined by 
$\{a,o,b\}\mapsto o$ and $d(a,o,b)\mapsto a$ is 
congruence preserving. 
Since $\ab{A}$ is strictly 1-affine complete, there exists $p\in\POL\ari{1}\ab{A}$ 
such that for all $x\in\{a,o,b,d(a,o,b)\}$ we have $p(x)=f(x)$.
Thus, Lemma~\ref{lemma:ex_citation_to_prop2.6Aic06} yields 
\[
o=d(o,o,o)=d(p(a),p(o),p(b))\mathrel{[\alpha,\beta]} p(d(a,o,b))=a.
\]
Thus, $(a,o)\in[\alpha, \beta]$, and hence 
$\alpha=\Cg{\{(a,o)\}}\leq [\alpha,\beta]\leq\alpha^-$. Contradiction.
\end{proof}

\begin{definition}
Let $\ab{A}$ be an algebra. We say that $\ab{A}$ is 
\emph{congruence regular} if for all $\theta, \eta\in\Con\ab{A}$ 
and for all $a\in A$ we have 
\[
a/\theta=a/\eta\Leftrightarrow \theta=\eta.
\]
\end{definition}

\begin{proof}[Proof of Theorem~\ref{theorem:Malcev_and_regular_characterization_s1ac}]
The implication ``\eqref{item:ab2and sc1:theorem:Malcev_and_regular_characterization_s1ac}
implies \eqref{item:s1ac:theorem:MMalcev_and_regular_characterization_s1ac}''
follows directly from
Theorem~\ref{teor.main_theorem_sc1_ab2_imply_compl}.
Next, we assume \eqref{item:s1ac:theorem:MMalcev_and_regular_characterization_s1ac} 
and show
\eqref{item:ab2and sc1:theorem:Malcev_and_regular_characterization_s1ac}.
Proposition~\ref{prop:AB2_necessaria} implies that 
$\ab{A}$ satisfies (AB2). 
Moreover,
Proposition~\ref{prop:condition_on_sc1_failurs}
implies that there cannot be a failure of (SC1) in 
a congruence regular Mal'cev algebra that is strictly $1$-affine complete.
In fact if $\alpha$ and $\beta$ are a failure of (SC1), and 
$\alpha=\Cg{\{(a_1, a_2)\}}$, then 
by regularity,
there exists $b\in a_1/\beta\setminus a_1/\alpha$, 
such that the triple $(a_1, a_2, b)\in A^3$ satisfies 
$\alpha=\Cg{\{(a_1, a_2)\}}$ and $\beta=\Cg{\{(a_1, b)\}}$,
in contradiction with 
Proposition~\ref{prop:condition_on_sc1_failurs}. 
Thus, Lemma~\ref{lemma:prop3.3AicIdz04implicazione_da_due_a_uno}
implies that $\ab{A}$ satisfies (SC1),
and
\eqref{item:ab2and sc1:theorem:Malcev_and_regular_characterization_s1ac}
follows.
\end{proof}

In \cite{StaVoj15} the authors developed a theory of commutators
for the 
variety of loops. We build on that to derive a characterization 
of finite strictly 1-affine complete loops
as a corollary of
Theorem~\ref{theorem:Malcev_and_regular_characterization_s1ac}.
As described in detail in \cite[Section~2.4]{StaVoj15},
there exists an order-preserving correspondence between the 
lattice of normal subloops of a loop $\ab{Q}$ and the 
lattice $\Con\ab{Q}$. Following \cite[Section~2.4]{StaVoj15},
we denote the congruence associated to a normal subloop $N$
by $\gamma_N$, and we denote the normal subloop associated 
to a congruence $\alpha$ by $N_\alpha$. 
Given two normal subloops $A$ and $B$ of a loop $\ab{Q}$
we let $[A, B]_{\ab{Q}}:=N_{[\gamma_A, \gamma_B]}$,
and we let $(A:B)_{\ab{Q}}:=N_{(\gamma_A: \gamma_B)}$. 
The center $Z(Q)$ of a loop $\ab{Q}$ is defined by
\[
\begin{split}
&Z(Q):=\\
&\{a\in Q\mid \forall x,y\in Q\colon ax=xa, \, a(xy)=(ax)y, \, x(ya)=(xy)a, \, x(ay)=(xa)y\}.
\end{split}
\]
We start with an elementary lemma on the center of a loop:
\begin{lemma}\label{lemma:all_two_elements_normsubloops_are_contianed_in_the_center}
All the two element normal subloops of a finite loop $\ab{Q}$
are contained in the center. 
\end{lemma}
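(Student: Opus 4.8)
The plan is to exploit the correspondence between normal subloops and congruences recalled above, together with the fact that in a loop all translations are congruence-preserving bijections, so that a congruence whose blocks have exactly two elements determines the nontrivial partner of each element uniquely. First I would observe that a two-element subloop $N$ of $\ab{Q}$ necessarily has the form $N=\{e,a\}$ with $e$ the identity element and $a\neq e$: closure forces $a\cdot a\in N$, and $a\cdot a=a$ would give $a=e$ by cancellation, so $a\cdot a=e$. Let $\theta:=\gamma_N\in\Con\ab{Q}$ be the congruence associated with $N$, so that $e/\theta=N$ and $(e,a)\in\theta$. Since each left translation $L_x\colon z\mapsto x\cdot z$ is a polynomial bijection of $\ab{Q}$ whose inverse (left division) is again a polynomial, $L_x$ permutes the $\theta$-classes and maps $e/\theta$ onto $x/\theta$; hence every $\theta$-class has exactly two elements.

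The heart of the proof is to turn membership in a common two-element $\theta$-class into the equalities defining $Z(Q)$. Applying the translations $L_x$ and $R_x$, which preserve $\theta$, to the pair $(e,a)$ yields $(x,x\cdot a)\in\theta$ and $(x,a\cdot x)\in\theta$; as $a\neq e$ gives $x\cdot a\neq x$ and $a\cdot x\neq x$ by cancellation, both $x\cdot a$ and $a\cdot x$ are the unique partner of $x$ in its two-element class, so $a\cdot x=x\cdot a$ for all $x$. For the two associative conditions I would apply $L_x$ to $(y,y\cdot a)\in\theta$ to get $x\cdot(y\cdot a)\in(x\cdot y)/\theta$ with $x\cdot(y\cdot a)\neq x\cdot y$, forcing $x\cdot(y\cdot a)=(x\cdot y)\cdot a$; symmetrically, applying $R_y$ to $(x,a\cdot x)\in\theta$ forces $(a\cdot x)\cdot y=(x\cdot y)\cdot a=a\cdot(x\cdot y)$. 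These are precisely the conditions $a\cdot x=x\cdot a$, $x\cdot(y\cdot a)=(x\cdot y)\cdot a$, and $a\cdot(x\cdot y)=(a\cdot x)\cdot y$ from the definition of the center.

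The fourth condition $x\cdot(a\cdot y)=(x\cdot a)\cdot y$ I would then derive purely formally from the first three: commuting $a$ past $y$ and afterwards past $x\cdot y$ gives $x\cdot(a\cdot y)=x\cdot(y\cdot a)=(x\cdot y)\cdot a=a\cdot(x\cdot y)=(a\cdot x)\cdot y=(x\cdot a)\cdot y$. Thus $a\in Z(Q)$ and $N\subseteq Z(Q)$. The step I expect to be the crux is the passage from congruence to equality: congruence-preservation of the translations only places $x\cdot a$, $a\cdot x$, and the various associated products in a common $\theta$-class, and it is exactly the hypothesis that $N$—hence every block of $\theta$—has size two that collapses these congruences into genuine identities. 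Without the two-element assumption the same argument would establish only the centrality of $a$ \emph{modulo} $\theta$, which is why the statement is restricted to two-element normal subloops.
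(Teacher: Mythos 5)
Your proof is correct and follows essentially the same route as the paper's: both establish $a x = x a$, $x(ya)=(xy)a$, and $a(xy)=(ax)y$ by observing that the relevant products lie in a common two-element coset (equivalently, $\gamma_N$-class) and are distinct from its base point by cancellation, and both then derive the fourth centrality identity $x(ay)=(xa)y$ purely formally from the first three. The only difference is presentational: you phrase normality via the congruence $\gamma_N$ and the translation maps $L_x, R_x$, whereas the paper works directly with the coset equalities $xN=Nx$, $x(yN)=(xy)N$, $N(xy)=(Nx)y$ and the uniqueness of solutions of equations such as $xz=x$.
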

\begin{proof}
Let $1$ be the identity of $\ab{Q}$ and let
$N=\{1,n\}$ be a two element normal subloop of $\ab{Q}$. 
Clearly,
$(N;\cdot)\cong(\Z_2; +)$. 
Furthermore, the definition of normal subloop implies that 
for all $x,y\in Q$ we have 
\[ xN=Nx, \quad x(yN)=(xy)N,\quad N(xy)=(Nx)y.\]
We first show that
\begin{equation}\label{eq:loop_lemma_xn=nx}
\forall x\in Q \colon xn=nx.
\end{equation} 
Since $N$ is normal 
for each $x\in Q$, we have $xn\in\{nx, x\}$.
Let $x\in Q$ be such that $xn=x$. Then $n$ is the
unique solution of the equation $xz=x$ in the unknown $z$. 
Since $1$ is also a solution, we infer that $1=n$,
contradiction. 

Next, we show that 
\begin{equation}\label{eq:loop_lemma_n(xy)=(nx)y}
\forall x,y\in Q\colon  n(xy)=(nx)y.
\end{equation}
Since $N$ is normal for all $x,y\in Q$, we have 
$n(xy)\in (Nx)y=\{xy, (nx)y\}$. Let $x,y$ be such that
$n(xy)=xy$. Then $n$ is the  unique solution of 
the equation $z(xy)=xy$ in the unknown $z$. 
Since $1$ is also a solution, we infer that $1=n$,
contradiction. 

Next, we show that 
\begin{equation}\label{eq:loop_lemma_x(yn)=(xy)n}
\forall x,y\in Q \colon x(yn)=(xy)n.
\end{equation}
Since $N$ is normal for all $x,y\in Q$, we have 
$(xy)n\in x(yN)=\{xy, x(yn)\}$. Let $x,y$ be such that
$(xy)n=xy$. Then $n$ is the  unique solution of 
the equation $(xy)z=xy$ in the unknown $z$. 
Since $1$ is also a solution, we infer that $1=n$,
contradiction. 

Finally, we show that $(xn)y=x(ny)$ for all $x,y\in Q$.
To this end, let $x,y\in Q$. We have that 
\eqref{eq:loop_lemma_xn=nx} implies
$x(ny)=x(yn)$.
Moreover, by \eqref{eq:loop_lemma_x(yn)=(xy)n}
we have that $x(yn)=(xy)n$. 
Thus, $x(ny)=(xy)n$, and by \eqref{eq:loop_lemma_xn=nx},
we have 
$x(ny)=n(xy)$. Hence \eqref{eq:loop_lemma_n(xy)=(nx)y}
implies that
$x(ny)=(nx)y=(xn)y$.

This concludes the proof that $n\in Z(Q)$. Since $1\in Z(Q)$
we can conclude that $N\subseteq Z(Q)$. 
\end{proof}
\begin{corollary}\label{cor:loops}
For a finite loop $\ab{Q}$ the following are
equivalent:
\begin{enumerate}
\item $\ab{Q}$ is strictly 1-affine complete;\label{item:char_loops_item_affine_complete}
\item there exist $n\in \N$ and a normal subloop $H$ of $\ab{Q}$
such that \label{item:char_loops_item_condition}
\begin{enumerate}
\item $\ab{Q}/H\cong \Z_2^n$, and \label{item:char_loops_subitem_condition_existence}
\item $\forall N\unlhd \ab{Q}\colon 
N\leq H\Rightarrow [N, N]_{\ab{Q}}=N$.\label{item:char_loops_subitem_condition_perfection}
\end{enumerate}
\end{enumerate}
\end{corollary}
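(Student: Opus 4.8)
The plan is to deduce Corollary~\ref{cor:loops} directly from Theorem~\ref{theorem:Malcev_and_regular_characterization_s1ac}. Every finite loop $\ab{Q}$ has a Mal'cev term and is congruence regular (by the normal-subloop/congruence correspondence of \cite[Section~2.4]{StaVoj15} a congruence is determined by any single one of its classes), so Theorem~\ref{theorem:Malcev_and_regular_characterization_s1ac} applies and reduces item~(\ref{item:char_loops_item_affine_complete}) to the conjunction of (SC1) and (AB2). It therefore suffices to show that $\ab{Q}$ satisfies (SC1) and (AB2) if and only if item~(\ref{item:char_loops_item_condition}) holds. Throughout I set $\gamma_H$ to be the meet of all maximal congruences $\eta$ with $\ab{Q}/\eta\cong\Z_2$ and let $H:=N_{\gamma_H}$, i.e. the intersection of all index-$2$ normal subloops of $\ab{Q}$. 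With this choice $\ab{Q}/H$ embeds into a power of $\Z_2$ and hence is isomorphic to some $\Z_2^n$, so item~(\ref{item:char_loops_subitem_condition_existence}) holds automatically; the difficulty is concentrated in matching item~(\ref{item:char_loops_subitem_condition_perfection}) with (SC1) and matching (AB2) with the ``order~$2$'' phenomenon. Two observations drive everything: since $\ab{Q}$ is a quasigroup all congruence classes have equal size, so for a covering $\alpha\prec\beta$ one has $\card{(a/\alpha)/(\beta/\alpha)}=[N_\beta:N_\alpha]$ independently of $a$; and by Lemma~\ref{lemma:all_two_elements_normsubloops_are_contianed_in_the_center} every abelian covering of index $2$ is central, so its centralizer equals $\uno{Q}$. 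In particular (AB2) becomes ``every covering $\alpha\prec\beta$ with $[\beta,\beta]\leq\alpha$ has $[N_\beta:N_\alpha]=2$''.

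For the implication (SC1)$+$(AB2)$\Rightarrow$(\ref{item:char_loops_item_condition}), only item~(\ref{item:char_loops_subitem_condition_perfection}) needs an argument, and I would argue by contradiction. If some $N\leq H$ is not perfect then there is an abelian covering $\alpha\prec\beta\leq\gamma_H$. By the equal-class-size remark and (AB2) this covering has index $2$, hence is central, so $(\alpha:\beta)=\uno{Q}$. Choosing a strictly meet-irreducible $\eta\geq\alpha$ with $\eta\not\geq\beta$ gives $\interval{\alpha}{\beta}\nearrow\interval{\eta}{\eta^+}$ by \cite[Proposition~7.1]{AicMud09}, whence $(\eta:\eta^+)=(\alpha:\beta)=\uno{Q}$ by Lemma~\ref{lemma:exAic18Lemma3.4}. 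Now (SC1) forces $\eta^+=\uno{Q}$, so $\eta$ is maximal and $\ab{Q}/\eta\cong\Z_2$; by definition $\gamma_H\leq\eta$. But then $\beta\leq\gamma_H\leq\eta$, contradicting $\eta\not\geq\beta$. Hence no abelian covering lies below $\gamma_H$, which is exactly item~(\ref{item:char_loops_subitem_condition_perfection}).

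For the converse (\ref{item:char_loops_item_condition})$\Rightarrow$(SC1)$+$(AB2) I use that $\interval{\gamma_H}{\uno{Q}}\cong\Con(\Z_2^n)$ is a projective geometry, hence a simple complemented modular lattice in which all coverings are projective and of index $2$. To prove (AB2), take an abelian covering $\alpha\prec\beta$; item~(\ref{item:char_loops_subitem_condition_perfection}) gives $\beta\not\leq\gamma_H$. I split on whether $\beta\wedge\gamma_H\leq\alpha$: if not, then $\interval{\alpha\wedge\gamma_H}{\beta\wedge\gamma_H}\nearrow\interval{\alpha}{\beta}$ produces an abelian covering with top $\beta\wedge\gamma_H\leq\gamma_H$, contradicting item~(\ref{item:char_loops_subitem_condition_perfection}); so $\beta\wedge\gamma_H\leq\alpha$ and $\interval{\alpha}{\beta}\nearrow\interval{\alpha\vee\gamma_H}{\beta\vee\gamma_H}$ is a covering inside $\interval{\gamma_H}{\uno{Q}}$, of index $2$. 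Lemma~\ref{lemma:ismorphisms_between_classes_same_element_projective_intervals} transfers the index back to $\interval{\alpha}{\beta}$, giving (AB2). To prove (SC1) I invoke Lemma~\ref{lemma:prop3.3AicIdz04implicazione_da_due_a_uno}: a failure of (SC1) yields join-irreducible $\alpha,\beta$ with $[\alpha,\beta]\leq\alpha^-\prec\alpha\leq\beta^-\prec\beta$. Here $\interval{\alpha^-}{\alpha}$ is abelian, so by the previous paragraph it transposes up into the projective geometry $\interval{\gamma_H}{\uno{Q}}$ and is projective to a top covering $\interval{\eta}{\uno{Q}}$ with $\eta$ maximal, $\eta\geq\alpha^-$ and $\eta\not\geq\alpha$. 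Since $\alpha\leq\beta^-\prec\beta$ and $\eta$ is maximal, $\beta\not\leq\eta$ (otherwise $\alpha\leq\eta$), so $\interval{\eta\wedge\beta}{\beta}\nearrow\interval{\eta}{\uno{Q}}$ forces $\beta^-=\eta\wedge\beta\leq\eta$; but then $\alpha\leq\beta^-\leq\eta$, contradicting $\eta\not\geq\alpha$. Hence no failure exists and (SC1) holds.

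The genuinely delicate point, and the step I expect to be the main obstacle, is the lattice bookkeeping that pins every abelian covering projectively inside $\interval{\gamma_H}{\uno{Q}}$. In the converse this is where item~(\ref{item:char_loops_subitem_condition_perfection}) is used twice: once to exclude the ``downward'' transposition (the case $\beta\wedge\gamma_H\not\leq\alpha$, which would manufacture an abelian covering below $H$), and once, through the reuse of a \emph{single} separating maximal congruence $\eta$, to collide the requirements $\beta^-\leq\eta$ and $\eta\not\geq\alpha$ coming from the two ends of an (SC1)-failure. Translating freely between ``index $2$'' and ``central'' via Lemma~\ref{lemma:all_two_elements_normsubloops_are_contianed_in_the_center} and the quasigroup equal-class-size property is precisely what makes these transpositions land in the elementary abelian $2$-quotient; once that is secured, the remaining commutator computations are routine.
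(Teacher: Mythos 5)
Your proposal is correct: every step checks out, including the delicate transpositions. The overall skeleton coincides with the paper's proof (reduce to (SC1)$+$(AB2) via Theorem~\ref{theorem:Malcev_and_regular_characterization_s1ac}, take $H$ to be the intersection of the index-$2$ normal subloops, and use Lemma~\ref{lemma:all_two_elements_normsubloops_are_contianed_in_the_center} together with \cite[Theorem~10.1]{StaVoj15} to make index-$2$ coverings central), and your argument for \eqref{item:char_loops_item_affine_complete}$\Rightarrow$\eqref{item:char_loops_subitem_condition_perfection} is essentially the paper's, up to establishing centrality before rather than after transposing up to a meet-irreducible. The converse direction, however, is executed differently. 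For (AB2) the paper fixes two maximal chains through $\gamma_H$ and through $\gamma_A\prec\gamma_B$ and invokes the Dedekind--Birkhoff bijection to force the abelian prime quotient above $\gamma_H$; your case split on $\beta\wedge\gamma_H\leq\alpha$ achieves the same with a single explicit transposition and is arguably more direct, at the cost of having to rule out the downward case separately via \eqref{item:char_loops_subitem_condition_perfection}. For (SC1) the paper takes a strictly meet-irreducible witness $M$ of the failure and transposes $\interval{M}{M^+}$ \emph{down} into $\interval{\bottom{Q}}{\gamma_H}$ to contradict \eqref{item:char_loops_subitem_condition_perfection} (or observes $M^+=\uno{Q}$ when $M\geq\gamma_H$), which is shorter; you instead start from the join-irreducible pair of Lemma~\ref{lemma:prop3.3AicIdz04implicazione_da_due_a_uno}, push $\interval{\alpha^-}{\alpha}$ up to a coatom quotient $\interval{\eta}{\uno{Q}}$ via a relative complement in the complemented modular lattice $\interval{\gamma_H}{\uno{Q}}$, and collide $\beta^-\leq\eta$ with $\eta\not\geq\alpha$. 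Your route buys a proof that never leaves the ``upper'' interval and makes visible exactly where each hypothesis enters, while the paper's is more economical; both are sound, and the only implicit ingredients you should make explicit in a write-up are that $[\alpha,\alpha]\leq[\alpha,\beta]\leq\alpha^-$ (so $\interval{\alpha^-}{\alpha}$ is indeed abelian) and that relative complements in $\interval{\gamma_H}{\uno{Q}}$ exist because it is a complemented modular lattice.
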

\begin{proof}
We first prove that \eqref{item:char_loops_item_affine_complete}
implies \eqref{item:char_loops_item_condition}.
Since $\ab{Q}$ is strictly 1-affine complete,
Theorem~\ref{theorem:Malcev_and_regular_characterization_s1ac}
implies that $\ab{Q}$ satisfies (SC1) and (AB2).
Let $H$ be the intersection of all the normal subloops of
$\ab{Q}$ of index $2$. Then $\ab{Q}/H$ is a subdirect product 
of loops of order $2$. Since a loop of order $2$ is an abelian 
group, $\ab{Q}/H$ is an abelian group.
Moreover, it has exponent $2$, and therefore it is
isomorphic to $\Z_2^n$ for some $n\in \N$.
In order to prove \eqref{item:char_loops_subitem_condition_perfection}
we assume that there exists $B\unlhd \ab{Q}$ such that
$B\leq H$ and $[B, B]_{\ab{Q}}< B$. 
Then there exists a normal subloop $A$ of $\ab{Q}$ such that $A\prec B$
and $[B, B]_{\ab{Q}}\leq A$. 
Let $C$ be a maximal element of the set 
\[\{X \unlhd \ab{Q}\mid A\leq X, \, X\ngeq B\}.\]
Then $C$ is meet irreducible and
$\interval{A}{B}\nearrow \interval{C}{C^+}$.  Thus, 
Lemma~\ref{lemma:ismorphisms_between_classes_same_element_projective_intervals}
and the fact that 
$\ab{Q}$ satisfies (AB2) imply that $C^+$ has exactly two $C$-cosets. 
Let us consider 
$\ab{Q}/C$. Since all the two element normal subloops of $\ab{Q}/C$
are contained in the center
(cf.~Lemma~\ref{lemma:all_two_elements_normsubloops_are_contianed_in_the_center}),
we have that $C^+/C\subseteq Z(Q/C)$. Thus, $\gamma_{C^+/C}
\leq \gamma_{Z(Q/C)}$.
By \cite[Theorem~10.1]{StaVoj15},
$\gamma_{Z(Q/C)}$ is central, and therefore,
$[\uno{Q/C}, \gamma_{Z(Q/C)}]\leq \bottom{Q/C}$. 
By the monotonicity of the commutator
(cf.~\cite[Exercises 4.156(1)]{McKMcnTay88}),
we have that
$[\uno{Q/C}, \gamma_{C^+/C}]\leq \bottom{Q/C}$. 
Hence $[Q, C^+]_{\ab{Q}}\leq C$.
Since $\ab{Q}$ satisfies (SC1), $Q=C^+$.
Thus, $C$ is maximal and therefore, $C\geq H\geq B$,
in contradiction with the choice of $C$. 

Next, we show that  \eqref{item:char_loops_item_condition} implies
(AB2). Let $A$ and $B$ be two normal subloops of $\ab{Q}$
such that $A\prec B$ and 
$[B, B]_\ab{Q}\leq A$. Let us fix two maximal chains in the congruence 
lattice of $\ab{Q}$: 
\[
\begin{split}
\bottom{Q}&=\delta_0\prec\dots \prec\delta_l=\gamma_H\prec\delta_{l+1}\prec\dots\prec\delta_{h}\prec\delta_{h+1}=\uno{Q},\\
\bottom{Q}&=\eta_0\prec\dots \prec\eta_m=\gamma_A\prec\gamma_B=\eta_{m+1}\prec\dots\prec\eta_{h}\prec\eta_{h+1}=\uno{Q}.
\end{split}
\]
Here $h+1$ is the height of $\Con\ab{Q}$. 
Then for all $i\in\{l,\dots, h\}$
each $\delta_{i+1}$-class contains exactly $2$ classes of $\delta_i$. 
Let $\Phi$ be the bijection between the two chains constructed in
the Dedekind-Birkhoff Theorem 
(cf.~\cite[Theorem~2.37]{McKMcnTay88}).
Since the intervals matched by $\Phi$ are projective
and $\interval{\gamma_A}{\gamma_B}$ is abelian,
Lemma~\ref{lemma:exAic18Lemma3.4} implies that
$\Phi$ maps $\interval{\gamma_A}{\gamma_B}$ to an 
interval $\interval{\delta_i}{\delta_{i+1}}$ with $i\geq l$.
Then
Lemma~\ref{lemma:ismorphisms_between_classes_same_element_projective_intervals}
implies that 
each class of $\gamma_B$ contains
exactly $2$ classes of $\gamma_A$. 

Next, we show that  \eqref{item:char_loops_item_condition} implies
(SC1). Seeking a contradiction, let us assume that (SC1) fails. 
Then there exists a meet irreducible 
normal subloop $M$ such that
$(M:M^+)_\ab{Q}> M^+$. If $M\geq H$, 
since the only meet irreducible elements in 
$\interval{H}{Q}$ are the coatoms, we have $M^+=Q$ and therefore 
$(M:M^+)>M^+$. Contradiction.
If $M\ngeq H$, then 
$\interval{M}{M^+}\searrow \interval{M\cap H}{M^+ \cap H}$,
and therefore, Lemma~\ref{lemma:exAic18Lemma3.4}
implies that there exists an abelian prime quotient below $H$
in contradiction with
\eqref{item:char_loops_subitem_condition_perfection}.

Hence we can conclude that 
\eqref{item:char_loops_item_condition} implies (SC1) and (AB2).
Thus, Theorem~\ref{theorem:Malcev_and_regular_characterization_s1ac}
implies  \eqref{item:char_loops_item_affine_complete}.
\end{proof}

\section*{Acknowledgment}
I would like to thank 
E.~Aichinger and P.~Idziak for 
suggesting the proof and content of 
Theorem~\ref{teor:extending_target_minimal-set_to_congruence},
and E.~Aichinger,
P.~Mayr and N.~Mudrinski for several helpful discussions
on commutator theory and the theory of
polynomial interpolation in general algebraic structures. 
Finally, I would like to address a special thank 
to the referee, 
for the
extremely insightful job made in reviewing this paper,
for improving the presentation of the material
and in particular 
for suggesting a less elaborate argument for 
Proposition~\ref{prop:char_congruence_preserving_functions_module},
Theorem~\ref{teor:extending_target_minimal-set_to_congruence} and
Proposition~\ref{prop:AB2_necessaria}.
\bibliographystyle{abbrv.bst}
\providecommand*{\url}[1]{\texttt{\detokenize{#1}}}
  \ifx\SetBibliographyCyrillicFontfamily\undefined\def\SetBibliographyCyrillicFontfamily{\relax}\fi
  \def\Cyr#1{\bgroup\SetBibliographyCyrillicFontfamily\fontencoding{T2A}\selectfont{#1}\egroup}
  \def\Palatalization#1{\bgroup\fontencoding{T1}\selectfont\v{#1}\egroup}

\end{document}